\newcommand{\ii}{\sqrt{-1}}
\newcommand{\Rmin}{R_{\text{min}}}
\newcommand{\Rmax}{R_{\text{max}}}
\newcommand{\gr}{\nabla}
\newcommand{\RF}{Ricci flow \;}
\newcommand{\NRF}{normalized Ricci flow \;}
\newcommand{\KRF}{K\"ahler Ricci flow \;}
\newcommand{\Ric}{\text{Ric}}
\newcommand{\Vol}{\text{Vol}}
\newcommand{\Ker}{\text{Ker}}
\newcommand{\ibar}{\bar {i}}
\newcommand{\kbar}{\bar {k}}
\newcommand{\jbar}{\bar {j}}
\newcommand{\vbar}{\bar {v}}
\newcommand{\zbar}{\bar {z}}
\newcommand{\lbar}{\bar {l}}
\newcommand{\pbar}{\bar {p}}
\newcommand{\kahler}{K\"ahler     }
\newcommand{\KE}{K\"ahler-Einstein     }
\newcommand{\ddzi}{\frac{\partial}{\partial z^i}}
\providecommand{\ip}[1]{\ensuremath{\langle #1\rangle}}
\newtheorem{thm}{Theorem}[section]
\newtheorem{defin}{Definition}[section]
\newtheorem{prop}{Proposition}[section]
\newtheorem{lemma}{Lemma}[section]
\newtheorem{cor}{Corollary}[section]
\newcommand{\bede}{\begin{defin}}
\newcommand{\eede}{\end{defin}}
\newcommand{\bethm}{\begin{thm}}
\newcommand{\eethm}{\end{thm}}
\newcommand{\beit}{\begin{itemize}}
\newcommand{\eeit}{\end{itemize}}
\newcommand{\bedes}{\begin{description}}
\newcommand{\eedes}{\end{description}}
\newcommand{\beee}{\begin{enumerate}}
\newcommand{\eeee}{\end{enumerate}}
\newcommand{\beeq}{\begin{equation}}
\newcommand{\eeeq}{\end{equation}}
\def\ddt {\frac{\partial}{\partial t}}
\def\PP {{\mathbb P}}
\def\RR {{\mathbb R}}
\def\EE{{\mathbb E}}
\def\HH{{\mathbb H}}
\def\CC {{\mathbb C}}
\def\La{\Lambda}
\def\De{\Delta}
\def\Om{\Omega}
\def\Ga{\Gamma}
\def\si{\sigma}
\def\la{\lambda}
\def\de{\delta}
\def\om{\omega}
\def\ga{\gamma}
\def\ve{\varepsilon}
\def\vp{\varphi}
\title{Introduction  to Evolution Equations in Geometry}
\author{Bianca Santoro}
\date{May 15th, 2009}
\begin{document}

\maketitle

\clearpage\mbox{}\clearpage

\tableofcontents
\setcounter{tocdepth}{2}

\chapter*{Preface}

These are the very unpretentious lecture notes for the minicourse
{\em Introduction to evolution equations in Geometry},
a part of the {\em  Brazilian Colloquium of Mathematics},
to be held at IMPA, in July of $2009$.

I have aimed at providing a first introduction to the main general ideas
on the study of the \RF,
as well as guiding the reader through the steps of \kahler
geometry for the understanding of the complex version of the \RF.
Most of the results concerning the (real) \RF  are due to Hamilton,
and the \KRF results are mainly due to Cao, but while researching
during the writing of my text, I found that the expositions
in \cite{Ben1} and \cite{Topping} are really clarifying.

There are extremely important  and deep aspects to this theory, concerning
   \RF with surgery and the work by Perelman, which are not discussed
   in this book.

I plan to keep improving these notes, and the updates will be available
at {\it www.math.duke.edu/$\sim$bsantoro}.
Meanwhile, I invite the reader to send suggestions, comments and possible corrections
to

\bigskip

\hspace{2.5cm}
{\bf bsantoro@math.duke.edu}

\bigskip

A last remark is that the reader will notice that on this text, constants
will be represented with the same symbol $C$, with rare exceptions.

\clearpage\mbox{}\clearpage

\chapter{Introduction}
\label{Introduction}

A classical problem in geometry is to seek special metrics
on a manifold.
From our differential geometry classes, we recall that the topology
of the manifold plays a central role in determining the presence of
metrics with special curvature. For the simple case of surfaces,
the {\bf Gauss-Bonnet Theorem} relates the total curvature of a
surface $M$
with its Euler characteristic:
$$
\int_M K dS = 2 \pi \chi(M).
$$
This expression tells us that every time we try to
"flatten" a region of our surface, we must pay the price of
modifying the curvature somewhere else as well. Another direct consequence
is that a sphere will never admit a flat metric.

In dimension $2$, the {\bf Uniformization Theorem} states that
every surface admits a metric of constant curvature $-1, 0$ or $1$,
according to its genus.

It is hence a  natural question to ask if such a classification
extends to higher dimensions.

Already in dimension $3$, a naive attempt of an immediate generalization of the
Uniformization Theorem fails, as $S^2 \times S^1$ cannot admit a
metric of constant curvature. So, a classification would only be possible if
we broaden the conditions, as well as narrow the types of manifolds in question.

As we shall see in Chapter~\ref{GeomConj}, Thurston's Decomposition Theorem
tells us that, for any $3$-manifold $M$, there is a way to cut it along special tori
and spheres (and gluing $3$-balls in the remaining boundary pieces) in such a way that
the remaining pieces admit "canonical geometries", in a way to be made precise in later chapters.

And here the Ricci flow enters the stage. Hamilton suggested
that such a decompotion in pieces with nice geometric structures
could be obtaining by flowing any initial metric on a manifold
through a smartly chosen equation. Understanding the
limiting metrics and the structure of eventual singularities that
may occur would provide a complete  understanding of the problem.

Now, what is a good choice for a flow?
The strategy to produce a nice evolution equation is to
minimize the right choice of energy functional.
Keeping in mind that any expression in the first derivatives
of $g$ would vanish in normal coordinates,
a first choice for such an energy functional would be
$$
E = \int_M R d\mu,
$$
where $R$ denotes the scalar curvature of a metric $g$.

The associated flow equation for minimizing $E$
is
$$
\ddt g(t) = \frac{2}{n} R g(t) - 2 \Ric(t),
$$
where $\Ric$ is the Ricci curvature of the metric $g(t)$.

Unfortunately, such an equation would imply that the evolution
equation for the scalar curvature is given by
$$
\ddt R  = -\left(\frac{n}{2} -1 \right)\De R +
|\Ric|^2 - \frac{1}{2}R^2,
$$
which is a backwards heat equation on $R$, which may have  no solutions even for short time.
Therefore, we need to be smarter about the choice of evolution equation.

Consider
\beeq
\label{e.NRF}
\begin{cases}
\ddt g(t) = \frac{2}{n} r g(t) - 2 \Ric(t)\\
g(0) = g_0,
\end{cases}
\eeeq
where $r$ denotes the average of the scalar curvature on the manifold $M$.

In fact, the first factor in the right-hand side of (\ref{e.NRF}) is just a normalization
that will guarantee that the volume of the manifold remains constant along the flow.
We claim that solutions to (\ref{e.NRF}) correspond (via rescalings) to solutions to the
unnormalized \RF equation
\beeq
\label{e.RF}
\begin{cases}
\ddt g(t) = - 2 \Ric(t)\\
g(0) = g_0.
\end{cases}
\eeeq

To see this, let $g$ be a solution to (\ref{e.RF}), and choose a function
$\psi$ such that the metric $\tilde g = \psi g$ has volume
$\int d\tilde \mu = 1$. With this normalization, you can check that
$$
{\tilde \Ric}  = \Ric,  {\hspace{1cm}} {\tilde R} = \psi^{-1} R, {\hspace{1cm}} {\tilde r} = \psi^{-1} r,
$$
where the geometric objects denoted with a $\sim$ on top are the ones related to $\tilde g$.

Let $\tilde t  = \int \psi(t) dt$ be  a rescaling of time. Then,
$$
\frac{\partial}{\partial {\tilde t}}\tilde g  = \ddt g + (\frac{d}{dt} \log \psi) g.
$$

Also, according to our definition,
$$
\int d\mu = \psi^{-n/2}  {\hspace{1cm}} \text{and} \hspace{1cm} \ddt \log \mu = \frac{1}{2}g^{ij}\ddt g_{ij} = -R
$$
and since
$$
\frac{d}{dt} \log(\int d\mu) = -r,
$$
this implies that
$$
\frac{\partial}{\partial {\tilde t}}\tilde g  = \frac{2}{n} {\tilde r} {\tilde g}  - 2 {\tilde \Ric},
$$
{\em i.e.}, $\tilde g$ is a solution to the \NRF (\ref{e.NRF}).

\bigskip

In dimension $2$, the \RF has a solution that exists for all times,
and it converges to the special metric of constant curvature of the surface,
providing a different proof for the Uniformization Theorem.

More generally, the \RF may develop singularities in finite time.
Nevertheless, the behavior serves  for understanding better the underlying
topology.

The current strategy is: we stop the flow once the singularity occurs,
perform a very careful surgery on the manifold and we restart the flow
on the "desingularized" manifold. Thanks to the groundbreaking work of
Perelman \cite{Perelman1}, \cite{Perelman2}, it is known that
this surgery procedure will only be performed a finite number of times,
and will provide the Thurston's decompotion.
Due to the introductory flavor of this set of notes, we will not
devote time to explain the beautiful theory of singularities on \RF.
We refer the reader to \cite{Ben2} for a very good exposition of this material.

\bigskip

In order to develop some very basic intuition to the \RF, let us study the simple example of a round sphere, with a metric $g_0$ such that
$$
\Ric (g_0) = \la g_0
$$
for some constant $\la> 0$.  We invite the reader to check that
 the solution of the \RF equation is
$$
g(t) = (1 - 2\la t)g_0.
$$
So, we see that the unnormalized \RF
shrinks a round sphere to a point in time $T  = (2\la)^{-1}$.

Note also that if $g_0$ were a hyperbolic metric (of constant sectional
curvature $-1$), then
$$
\Ric (g_0) = -(n-1) g_0,
$$
and so
$$
g(t) = (1 + 2(n - 1) t)g_0,
$$
which shows that the manifold expands homothetically for all times.

For a beautiful exposition on the intuition for the \RF, we refer
the reader to \cite{Topping}.

\chapter{The Geometrization Conjecture}
\label{GeomConj}

Our goal for this chapter is to describe the concept of a
model geometry,  understand Thurston's classification of
homogeneous geometries, and state the Geometrization Conjecture in detail.

\section{Introduction}

 In order to provide a better picture, we shall describe the concept of a
 model geometry for surfaces.

 Let $X$ be either $S^2$ (the sphere), $\EE^2 $ (Euclidean plane) or $\HH^2$
 (hyperbolic plane).

 \bede
 Let $F$ be a closed surface. If $F$ can be described as
 the quotient $X / \Ga$, where $\Ga$ is a subgroup of isomorphisms
 of $X$, such that the map $X \rightarrow F$ is a covering space,
 then we say that $F$ has a {\bf geometric structure modeled on $X$}.

 \eede

The {\bf Uniformization Theorem} tells us that every closed surface admits
a geometric structure, and due to Gauss-Bonnet Theorem, this structure must
be unique, and is determined by the Euler characteristic of the surface.

A natural question that arises is whether this classification can be extended to
higher dimensions. The simple example of  $S^2 \times S^1$ tells us that a naive extension
of such classification fails, for it cannot be covered by any of $\EE^3$, $S^3$ or $\HH^3$,
as the stabilizer of a point is not in $O(3)$.

First, we need to explain what do we mean by a geometric structure.

\bede
A metric on a manifold $M$ is {\bf locally homogeneous} if
for any $x, y \in M$, there exist neighborhoods
$U $ of $x$ and $V $ of $y$, and an isometry $U \rightarrow V$
mapping $x$ to $y$.
\eede

Intuitively, a locally homogeneous manifold looks the same in a
neighborhood of any point.

\bede
We say that $M$ {\bf admits a geometric structure} if $M$ admits a
complete, locally homogeneous metric.

\eede

It is a theorem by Singer \cite{Singer} that if a manifold happens
to be simply connected, then being locally homogeneous is equivalent
to being homogeneous. Therefore, up to passing to the universal cover,
we may restrict ourselves to the study of homogeneous geometries.

However, many manifolds do not admit a geometric structure. For example,
any non-trivial connected sum does not admit a geometric structure, with the
exception of $\PP^3 \# \PP^3$. So we should be a little more subtle in our
attempt to classify $3$-manifolds.

The famous theorem by Thurston roughly says that any compact,
orientable $3$-manifold $M$ can be cut along disjoint embedded
$2$-spheres and tori in such a way that after gluing $3$-balls
to all boundaries, each piece  admits a geometric structure. We will
describe this statement in further detail on the next section.

For a detailed description of the eight geometries, we refer the
reader to \cite{PeterScott}.

\section{Thurston's Classification of $3$-dimensional geometries}

In this section, we would like to present an outline of the proof that
there are only $8$ distinct geometric structures, and that once a
manifold admits one of them, it is unique.

Let $M$ be a closed $3$-manifold, and let $X$ be its universal cover.
As we have seen, $M$ admits a geometric structure if $X$ has a complete
homogeneous metric. For this case, the isometry group of $X$ acts transitively,
with compact point stabilizers.

\bede
Throughout this chapter, a
 {\bf geometry} shall mean a pair $(X, G)$, where  $X$ is a simply connected manifold,
 and $G$ acts on $X$ with compact point stabilizers.
\eede

We will only consider maximal geometries (when $G$ is maximal), and we need to impose the
condition that there must exist a subgroup $H$ of $G$ which acts on $X$ as a covering group
and has compact quotient.

\bethm
(Thurston)
\label{t.Thurston}
Any maximal, simply connected $3$-dimensional geometry which admits a compact quotient
is equivalent to a pair $(X, G)$, $G = $Isom$(X)$, where $X$ is one of the following:
$$
\EE^3, S^3, \HH^3, S^2 \times \RR, \HH^2 \times \RR, \tilde{SL(2, \RR)}, Nil, Sol.
$$

\eethm

The remainder of this section will be devoted to the proof of this result.

Given $(X, G)$, endow $X$ with a $G$-invariant metric $g$, via the classical construction:
on the tangent space of a point $T_xX$,  build a $G$-invariant inner product
by taking the average over the stabilizer, and use the homogeneity of the space
to extend the metric to the whole manifold. The (simple) details on this construction
are left to the reader.

Choose a point $x \in X$, and consider $I(G_x)$, the identity component of the stabilizer of $G$.
Clearly, $G_x$ acts on $T_xX$, preserving the inner product on $T_xX$ given by the $G$-invariant metric
$g$. Therefore, $G_x$ must be a compact  subgroup of $O(3)$.

Since $I(G_x)$ is connected, there are not many choices for it: it can either be the trivial group,
$SO(2)$ or $SO(3)$. This subdivision play a crucial role in the classification

\beit
\item
{\bf Case 1:} If $I(G_x) = SO(3)$.

The manifold looks identical in every direction, so it can  be only $\EE^3$, $\HH^3$ or $S^3$. These
cases can be differentiated according to the sign of the sectional curvature, constant in every direction.

\item
{\bf Case 2:} If $I(G_x) = SO(2)$.

Let $L_x \subset  T_xX$ be the $1$-dimensional subspace fixed by $SO(2)$, and
let $P_x$ be the orthogonal complement (with respect to the $G$-invariant metric $g$) of
$L_x$ in $T_xX$.

Since $I(G_x)$ is a normal subgroup, both $L_x$ and $P_x$ are invariant under
the action of $G_x$. Furthermore, since $X$ is simply-connected,
we can choose a coherent orientation to let $L_x$ define a unit vector
field $v_x$ on $X$.

Note that the vector field does not need to be invariant under
the whole group $G$, but it will be invariant under some subgroup
$G_1$ of $G$ of order at most $2$.
Also the plane field defined by $P_x$ must be also invariant under $G_1$.
Therefore, both $v_x$ and $P_x$ descend to any manifold covered by $X$ and
with covering group contained in $G_1$.

Let $\phi_t(.)$ be the flow generated by $v_x$, and note that  it also descends
to the covered manifold. The plane field $P_x$ inherits an inner product
from $T_xX$, which is preserved by $\phi_t$.

To see this, take any vector $u \in P_x$, and set $||d\phi_t(u)|| = f(t)||u||$.
Since the vector field is preserved by the flow, the volume form will get multiplied by
a constant factor $f(t)$. Since the manifold is compact, $f(t) = 1$.

Now, if the plane field is integrable, $X$ must be either $S^2 \times \RR$,
$\HH^2 \times \RR$ or $\EE^2 \times \RR$. The latter will be disregarded,
since we are only looking for maximal geometries.

If $P_x$ is not integrable, then $X$ must be isometric to either $\tilde(SL(2, \RR))$,
$Nil$ or $S^3$, which will also be discarded for maximality.

\item
{\bf Case 3:} If $I(G_x) = G_e$ is trivial.

This means that $G_e$ acts transitively and freely on $X$, so we can
identify $X$ with the group $G_e$ itself. Since the geometry  $(X, G)$
was assumed to admit a compact quotient, it follows that $G_e$ has a
subgroup $H$ such that the quotient $G_e / H$ is compact.

\bede
A Lie Group $G$ is {\bf unimodular} if its left-invariant Haar measure
is also right-invariant.
\eede

If $G$ has a discrete subgroup $H$ such that $H \backslash G$ has a finite measure
inherited from the left invariant Haar  measure on $G$, then  $G$ must be unimodular.
Therefore, our $G_e$ must be unimodular.

Milnor \cite{Milnor} classified all simply  connected, unimodular Lie Groups:
$S^3$, $\EE^3$, $\tilde{SL(2, \RR)}$, $\tilde{\text{Isom}(\EE^2)}$, $Nil$ and  $Sol$.

Since we are requesting that $(X, G)$ is a maximal geometry,  the only possibility left is that
$X= Sol$. We must check that for any left-invariant metric on $Sol$, $I(G_x)$
is trivial. The argument we follow is provided by Milnor, in \cite{Milnor}.
He shows that a left-invariant metric on $Sol$
determines in a canonical way an orthonormal basis of the tangent space at the identity.
Such basis consists of eigenvectors of a naturally defined self-adjoint map
on the tangent space at  the identity, with associated eigenvalues
$\la_1 > 0$, $\la_2 = 0$ and $\la_3 < 0$.

Hence, any isometry of $Sol$ which fixes the identity must satisfy very specific
conditions on the eigenvectors. One can  then deduce that the
stabilizer of a point has order at most $8$, and if it has order exactly $8$,
it must be isomorphic to $D(4)$, which can be realized via the metric we described on
last section.

\eeit
 This completes the proof of Theorem \ref{t.Thurston}.

The proof of the following theorem, together with a very nice survey on
$3$-dimensional geometries, can be found in
\cite{PeterScott}.

\bethm
If $M$ is a closed $3$-manifold which admits a geometric structure
modelled in one of the eight possible model geometries, then it must be unique.
\eethm

\clearpage\mbox{}\clearpage

\chapter{Ricci Flow on Homogeneous Geometries}

This chapter will be devoted to the study of the behavior of Ricci flow
on homogeneous geometries. This will help the reader to build up some intuition for
the Ricci Flow, as in this case, the partial differential equation will be simplified into
a system of coupled ordinary differential equations due to homogeneity of the  space.
We will follow closely the approach chosen by \cite{Ben1}, as it exemplifies
very well the main character of this evolution equation.

We will restrict our attention to the case of the five model geometries that can be realized
as a pair $(G,  G)$, where $G$ is a simply-connected,  unimodular Lie Group.

\section{ Ricci Flow as a system of ODE's}

Let $G$ be any Lie Group, and let $\cal G$ be its Lie Algebra. The set of
left-invariant metrics on $G$ can be naturally identified with the set $S_n$ of
symmetric $n \times n$-matrices. For each metric, the Ricci
flow defines a path $g(t) \in S_n$, so we immediately see the reduction of the
flow to a system of $n(n+1)/2$ equations. However, by using a smart choice of moving
frames (the {\bf Milnor Frame}), we will be able to diagonalize the system, as follows.

First, we will equip $\cal G$, the Lie Algebra of a $3$-dimensional Lie Group $G$,
with a left-invariant moving frame $\{ F_i\}$, $i = 1, 2,3.$
We define structure equations by
\beeq
[F_i, F_j] = c_{ij}^k F_k,
\eeeq
and the {\bf adjoint representation} of $\cal G$
is the map $ad: {\cal G} \rightarrow gl(\cal G)$
defined by
\beeq
ad(V)(W)  = [V, W].
\eeeq

Once the orthonormal frame $\{ F_i\}$ is fixed,
we define an endomorphism ${\cal G} \rightarrow {\cal G}$
by
\beeq
F_1 \mapsto [F_2,F_3], \hspace{0.3cm}
F_2 \mapsto [F_3,F_1], \hspace{0.3cm}
F_3 \mapsto [F_1,F_2].
\eeeq
For our fixed basis, the matrix $C$ representing
this endomorphism has the structure constants as
its entries.

Our goal is to argue that there exists a choice of
frame that makes the matrix $C$ be diagonal.
Observe that if $G$ is unimodular, then
$tr(ad(V))$ = 0 for every $V \in \cal G$. It is a very
simple exercise to check that this condition implies that
the matrix $C$ is necessarily self-adjoint, and hence
there exists an orthogonal change of basis such that
the matrix $\hat C$ representing the endomorphism in the
new basis is orthogonal.

By reordering, and dropping the assumption that the new frame
must be {\bf orthonormal} (we will only need to keep it orthogonal), we
may assume that the matrix for this endomorphism is given by
$$
C = \begin{pmatrix}2 \la & 0 & 0 \cr 0 & 2 \mu & 0 \cr 0 & 0 & 2 \nu \end{pmatrix},
$$
where $\la \leq \mu \leq \nu$, and
$\la, \mu, \nu \in \{-1, 0, 1\}$. This choice of frame is called
a {\bf Milnor Frame}.

Now, let $\{ F_i \}$ be a Milnor frame for some left-invariant metric $g$.
Then, there exist  $A, B, C > 0 $ such that $g$ can be written as
$$
g = A \om^1 \otimes \om^1 +
B \om^2 \otimes \om^2 +
C \om^3 \otimes \om^3
$$
using the dual frame  $\{\om^i \}$.

In order to evolve the metric $g$ by the Ricci flow,
we will compute explicitly its curvature. Recall that the
{\bf Levi-Civita connection} of the metric $g$ is given by
\beeq
\nabla_X Y = \frac{1}{2}\left\{ [X, Y] - (ad X)^* Y - (ad Y)^* X \right\},
\eeeq
and once we know this expression, we can write the curvature tensor $R$ of the metric
$g$ as
\beeq
R(X, Y) Z  = \nabla_X \nabla_Y Z - \nabla_Y \nabla_X Z - \nabla_{[X,Y]}Z.
\eeeq

Now, note that if $\{ F_i\}$ is a Milnor frame, then it is a simple computation
to check that the map $ad^*$ is determined by
$$
ad^*  = \left( (ad F_i)^* F_j \right)^j_i
= \begin{pmatrix}
0 & 2 \la \frac{A}{C} F_3 & -2 \la \frac{A}{B} F_2 \cr
-2 \mu \frac{B}{C} F_3 & 0 & 2 \mu \frac{B}{A} F_1 \cr
2 \nu \frac{C}{B} F_2 & -2 \nu \frac{C}{A} F_2 & 0
\end{pmatrix}
$$

\begin{lemma}
For our choice of a Milnor frame, we have that
$$
\ip{R(F_k, F_i)F_j, F_k} = 0.
$$
\end{lemma}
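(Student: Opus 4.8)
The statement is that the "off-diagonal" sectional curvature components $\ip{R(F_k, F_i)F_j, F_k}$ vanish for a Milnor frame, where the indices are understood so that $\{i,j\}$ are distinct from $k$ but $i \ne j$ as well — i.e. the genuinely mixed terms. The strategy is a direct computation using the formula $R(X,Y)Z = \nabla_X\nabla_Y Z - \nabla_Y\nabla_X Z - \nabla_{[X,Y]}Z$ together with the explicit table for $ad^*$ and the Koszul formula $\nabla_X Y = \frac{1}{2}\{[X,Y] - (ad X)^* Y - (ad Y)^* X\}$ already recorded in the excerpt. Since everything is left-invariant, $\nabla_{F_a}F_b$ is a constant-coefficient combination of the $F_c$, and from the $ad^*$ matrix together with the structure constants $[F_i,F_j] = 2(\text{appropriate }\lambda,\mu,\nu)\,F_k$ one reads off that $\nabla_{F_a}F_b$ is always a multiple of the \emph{third} frame vector $F_c$ (with $\{a,b,c\} = \{1,2,3\}$) when $a \ne b$, and $\nabla_{F_a}F_a$ is zero. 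This is the crucial structural fact that makes the diagonalization work.

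**Key steps.** First I would write out the nine covariant derivatives $\nabla_{F_a}F_b$ explicitly as scalar multiples $\nabla_{F_a}F_b = \Gamma_{ab}^c F_c$, using Koszul plus the $ad^*$ table; the point to extract is that $\Gamma_{ab}^c = 0$ unless $\{a,b,c\}$ are all distinct, and $\Gamma_{aa}^c = 0$. Second, I would feed this into $R(F_k,F_i)F_j$. Fixing the relevant case $i \ne j$, $i,j \ne k$ (so $\{i,j,k\} = \{1,2,3\}$), the term $\nabla_{F_k}\nabla_{F_i}F_j$ requires $\nabla_{F_i}F_j$, which is a multiple of $F_k$; then $\nabla_{F_k}F_k = 0$, so this term drops. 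Similarly $\nabla_{F_i}\nabla_{F_k}F_j$ requires $\nabla_{F_k}F_j \propto F_i$ and then $\nabla_{F_i}F_i = 0$, so this term drops too. Finally $\nabla_{[F_k,F_i]}F_j = \Gamma \cdot \nabla_{F_j}F_j = 0$ since $[F_k,F_i]$ is a multiple of $F_j$. Hence $R(F_k,F_i)F_j = 0$ outright in this case, and a fortiori $\ip{R(F_k,F_i)F_j,F_k} = 0$. The remaining index patterns (where two of $i,j,k$ coincide, or where the inner product with $F_k$ is forced to vanish for parity reasons) are handled by the same bookkeeping: $\nabla$ applied to a diagonal pair gives zero, and $\nabla$ applied to an off-diagonal pair lands in the complementary direction, so pairing against $F_k$ filters out everything but the diagonal sectional curvatures $\ip{R(F_i,F_j)F_j,F_i}$.

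**Main obstacle.** There is no deep obstacle; the work is entirely organizational. The one place to be careful is the transcription of the $ad^*$ matrix — the entry in row $3$, column $2$ in the displayed matrix reads $-2\nu\frac{C}{A}F_2$, which on dimensional and symmetry grounds should be $-2\nu\frac{C}{A}F_1$; I would re-derive the $ad^*$ table directly from Koszul rather than trust the typeset matrix, precisely to make sure the "$\nabla_{F_a}F_b$ is a multiple of the third vector" claim is airtight. Once that claim is nailed down, the vanishing is immediate by the three-term cancellation above, and no curvature identity (Bianchi, symmetry of $R$) is needed — though one could alternatively invoke the symmetry $\ip{R(X,Y)Z,W} = \ip{R(Z,W)X,Y}$ to halve the case analysis. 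I would present the computation of the $\Gamma_{ab}^c$ as a short lemma or inline table and then state the conclusion in two lines.
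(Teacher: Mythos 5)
Your proposal is correct and follows exactly the route the paper intends: the text's one-line proof (``obtained immediately by our explicit expressions for the Levi-Civita connection and curvature'') is precisely the computation you spell out, namely that the Koszul formula with a diagonal left-invariant metric gives $\nabla_{F_a}F_a = 0$ and $\nabla_{F_a}F_b \propto F_c$ for $\{a,b,c\}=\{1,2,3\}$, after which all three terms of $R(F_k,F_i)F_j$ vanish. Your observation that the displayed $ad^*$ matrix has a typo in the $(3,2)$ entry (it should be a multiple of $F_1$, not $F_2$) is also correct, and re-deriving the table from the Koszul formula is the right safeguard.
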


The proof of this lemma is obtained immediately by
our explicit expressions for the Levi-Civita connection
and curvature.

This result tells us that a choice of Milnor frame allows us to identify globally
both $g$ and $Ric(g)$ with diagonal matrices. So the Ricci flow is reduced to a
system of only $3$ equations, instead of $6$, as expected.

Having completed the necessary background,  we will present the behavior of the flow of
homogeneous metrics with the $3$ possible isotropy groups.

\section{Ricci Flow of geometry with isotropy $S0(3)$}

This group includes $\RR^3, \HH^3$ and $S^3$. We will restrict our
attention to
$S^3$, to be identified with the Lie Group $SU(2)$ of complex $2 \times 2$ matrices
with determinant equal to $1$. The signature for a Milnor frame on
$SU(2)$ is given by $\la = \mu = \nu = -1$.

We will study the behavior of the Ricci Flow on $SU(2)$ with respect to a $1$-parameter
family of initial data exhibiting collapse (in the Gromov-Hausdorff sense) to a lower dimensional
manifold.

Consider the {\bf Hopf Fibration} $S^1 \mapsto S^3 \rightarrow S^2$ induced by the
projection $\pi : SU(2) \rightarrow \CC\PP^1$ given by $\pi(w,z) = [w,z]$.
In \cite{Berger}, Berger exhibit a collapse of $S^3$ to a round $S^2$ while
keeping the curvature bounded. Following \cite{Ben1}, we study here a family of
left-invariant initial metrics $\{g_\ve \}$, $\ve \in (0, 1]$,  on $SU(2)$. With
respect to a fixed Milnor Frame, we will write these metrics as
\beeq
g_\ve = \ve A \om^1 \otimes \om^1 +
B \om^2 \otimes \om^2 +
C \om^3 \otimes \om^3
\eeeq

A simple exercise is left to the reader: using the expression for the Riemann tensor in terms
of the structural constants, together with the definition of {\bf sectional curvature} of the plane
spanned by $F_i $ and $F_j$,
$$
K(F_i,F_j) = \ip{R(F_i, F_j)F_j, F_i},
$$
 you may check that the Ricci tensor of the metric $g_\ve$ is determined by
\begin{eqnarray}
\label{e.su2ric}
\Ric(F_1,F_1) & = & \frac{2}{BC}\left[ (\ve A)^2 - (B-C)^2 \right] \\
\Ric(F_2,F_2) & = & \frac{2}{\ve A C}\left[ (B^2 - (\ve A-C)^2 \right] \\
\Ric(F_3,F_3) & = & \frac{2}{\ve A B}\left[ C^2 - (\ve A -B)^2 \right]
\end{eqnarray}

The following proposition tells us that independently of the choice of a initial
homogeneous metric on $SU(2)$, the Ricci Flow will shrink to a round point in finite time.

\begin{prop}
For any $\ve \in (0,1]$ and any choice of initial data $A_0, B_0, C_0 > 0$,
the solution $g_\ve$ of \ref{e.RF} exists for a maximal finite time $T_\ve < \infty$,
becoming asymptotically round.
\end{prop}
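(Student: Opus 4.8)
The plan is to analyze the coupled ODE system for the three metric coefficients $A(t), B(t), C(t)$ obtained by feeding the Ricci tensor formulas \eqref{e.su2ric} into the Ricci flow equation \eqref{e.RF}, which here reads $\ddt A = -2\,\Ric(F_1,F_1)$ and cyclically (with the understanding that the first coefficient is $\ve A$). The strategy has three stages: (i) show the flow exists on a maximal interval $[0,T_\ve)$ with $T_\ve < \infty$; (ii) show that all three coefficients tend to $0$ as $t \to T_\ve$; (iii) show that the metric becomes \emph{round} in the sense that, after rescaling to keep the diameter or volume fixed, the three coefficients become asymptotically equal. A convenient bookkeeping device throughout is to track the quantities $x = \ve A$, $y = B$, $z = C$, and to monitor the scalar curvature $R = \Ric(F_1,F_1)+\Ric(F_2,F_2)+\Ric(F_3,F_3)$, which for this signature is strictly positive on any left-invariant metric.

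For stage (i), I would first observe that on $SU(2)$ every left-invariant metric has strictly positive scalar curvature (this follows from the signature $\la=\mu=\nu=-1$ and a short computation with \eqref{e.su2ric}), and in fact positive Ricci curvature is \emph{not} automatic but can be shown to be preserved: the key inequality to verify is that the differences $|x-y|, |y-z|, |z-x|$ cannot grow too fast relative to the products $xy, yz, zx$, so that once $\Ric > 0$ it stays so, or at worst one controls the off-diagonal spread. Given positivity of $R$ and boundedness-from-above of the coefficients (the largest coefficient is nonincreasing, since for the largest one the corresponding $(B-C)^2$-type term makes its Ricci curvature nonnegative), standard Ricci flow theory — the maximum principle bound $\ddt \Rmin \ge \frac{2}{n}\Rmin^2$ — forces the flow to become singular at some finite time $T_\ve \le \frac{n}{2\Rmin(0)}$. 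This handles finiteness of $T_\ve$.

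For stage (ii), I would show no coefficient can stay bounded away from $0$: if, say, $\liminf_{t\to T_\ve} x(t) > 0$ while the curvature blows up, then from \eqref{e.su2ric} the blow-up must come from one of the products $xy, yz, zx$ in a denominator tending to $0$, forcing another coefficient to $0$; a short argument using the ODE for the smallest coefficient (whose Ricci curvature, hence whose time-derivative, one shows is controlled so that it cannot collapse while the others stay large) then propagates collapse to all three. The cleanest route is to prove that the ratios $x/y$, $y/z$ stay bounded above and below along the flow — an estimate of "pinching" type — after which all three coefficients are comparable and must vanish together at $T_\ve$.

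The genuinely hard part is stage (iii), the asymptotic roundness, and also the pinching estimate it rests on. The heart of the matter is to show that the scale-invariant quantities, e.g.\ $\frac{x}{y}$ and $\frac{y}{z}$, converge to $1$ as $t \to T_\ve$. I would set up differential inequalities for $u = \log(x/y)$ and $v = \log(y/z)$ derived from the ODE system and argue that $(u,v) = (0,0)$ is an attracting fixed point — concretely, that $\frac{d}{dt}(u^2 + v^2)$ is negative (after absorbing a positive factor coming from the curvature scale) away from the origin, so the spread decays. Equivalently, following \cite{Ben1}, one can rescale by the scalar curvature to obtain a non-singular limiting flow on a fixed time interval and identify the limit, using a monotonicity/entropy argument or the explicit structure of the ODEs, with the round sphere; uniqueness of the round metric as the fixed point of the rescaled system then gives "asymptotically round." I expect the delicate point to be ruling out the degenerate collapsing behavior (the Berger-sphere direction, where $\ve \to 0$ initially) persisting under the flow — one must show the flow \emph{un-collapses} the Hopf direction, i.e.\ that the anisotropy introduced by the parameter $\ve$ is washed out before the singular time, and this is exactly what the pinching estimate for $u,v$ must quantify uniformly in $\ve \in (0,1]$.
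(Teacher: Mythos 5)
Your stage (i) rests on a false lemma. It is not true that every left-invariant metric on $SU(2)$ has strictly positive scalar curvature: tracing (\ref{e.su2ric}) gives
$$
R \;=\; \frac{2}{\ve ABC}\Bigl[\,2\bigl(\ve A\,B + BC + C\,\ve A\bigr) - \bigl((\ve A)^2 + B^2 + C^2\bigr)\Bigr],
$$
which is negative for sufficiently squashed metrics (e.g.\ $B \gg C \geq \ve A$, or a Berger metric with the fiber stretched rather than shrunk). Consequently the maximum-principle bound $\ddt \Rmin \geq \tfrac{2}{n}\Rmin^2$ gives no finite-time conclusion unless $\Rmin(0)>0$, which is exactly what you cannot assume for arbitrary $A_0,B_0,C_0$; and your fallback observation that the largest coefficient is nonincreasing only yields monotonicity, not extinction in finite time. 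The paper avoids all of this with an elementary ODE argument: after relabeling so that $\ve A_0 \leq C_0 \leq B_0$, one checks that this ordering is preserved (via $\frac{d}{dt}\log(B-\ve A) = 4\frac{C^2-(B+\ve A)^2}{\ve ABC}$), and then that $\ddt B \leq -8 + \frac{4\ve A}{C} \leq -4$, so the largest coefficient reaches zero by time $B_0/4$ and $T_\ve<\infty$ with no curvature-positivity input.

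For stage (iii) you correctly identify that the content is a pinching estimate for the scale-invariant ratios, but you only describe a program (``I would set up differential inequalities \dots and argue that $(0,0)$ is an attracting fixed point''); no inequality is actually derived, and the hard point you flag is left open. The paper's mechanism here is again a single monotone quantity: $\ddt\,\frac{B-\ve A}{\ve A} \leq 0$, so $0 < B - \ve A \leq \de\, \ve A$ with $\de$ determined by the initial data, which together with the preserved ordering controls the spread of all three coefficients. If you want to salvage your approach, you should replace the scalar-curvature argument by the ordering-plus-$\ddt B\le -4$ computation, and replace the ``attracting fixed point'' heuristic by an explicit monotone ratio of the above type.
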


\begin{proof}
The proof we outline here is provided in further detail in \cite{Ben1}.
It has two main steps: we first notice that (\ref{e.su2ric}) determine
the right-hand side of the system for the evolution of $A, B$ and $C$.
Due to the symmetry of the system, we can assume that
$\ve A_0 \leq C_0 \leq B_0$.

It is simple to check that
$$
\frac{d}{dt} \log (B - \ve A) = 4 \frac{C^2 - (B+ \ve A)^2}{\ve ABC},
$$
which shows that the relation $\ve A \leq C \leq B$ is preserved under the flow.
Furthermore, note that
$$
\ddt B \leq -8 + \frac{4\ve A}{C} \leq -4,
$$
so the solution can only exist on a finite time $T_\ve$.

The second step is to show that the metric becomes round as $t \rightarrow \infty$.
One can check that
$$
\ddt \frac{B - \ve A}{\ve A} \leq 0,
$$
so this quantity is bounded above by its initial value. Therefore, for $\de = \frac{B_0 - \ve A_0}{\ve A_0} $,
we have
$$
0 < B - \ve A \leq \de \ve A,
$$
completing the proof of the proposition.
\end{proof}

In contrast with the other model geometries, the Ricci flow of
a homogeneous metric will avoid collapse. We will refer to
\cite{Ben1} for the proof that in fact, the maximal times $T_\ve$
can be chosen uniformly for all $\ve \in (0, 1]$.

For the result about collapsing, we define the quantities
$E = B + C$, and $F = (B - C)/\ve$.
\begin{prop}
If $g_\ve(t)$,  a family of metrics parametrized
by
$\ve \in (0, 1]$, satisfies
$$
\lim_{\ve \rightarrow 0} F(0,\ve) > 0,
$$
then for all $t \in (0, T_{\text max})$,
$$
\lim_{\ve \rightarrow 0} F(t,\ve) = 0.
$$
\end{prop}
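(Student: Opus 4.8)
The plan is to convert the claim into a statement about a single scalar linear ODE satisfied by $F$ along the flow, and then to show that the decay rate of this ODE is of order $\ve^{-1}$, so that $F(t,\ve)$ is crushed as $\ve\to0$. Throughout I write $a:=\ve A$ for the genuine first metric coefficient, so that $B-C=\ve F$ and $B+C=E$. Since the hypothesis $\lim_{\ve\to0}F(0,\ve)>0$ forces $B_0>C_0$, and $\ve A_0<C_0$ once $\ve$ is small, I may invoke the preceding proposition: after this harmless relabelling the ordering $a\le C\le B$ is preserved by (\ref{e.RF}), the solution exists on a maximal interval $[0,T_\ve)$, and (by the uniform-time statement recalled above) $T_\ve\ge T_{\max}$ for all small $\ve$.

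First I would differentiate $B-C$. Using $\ddt(\text{coefficient})=-2\,\Ric$ together with (\ref{e.su2ric}),
$$
\ddt B-\ddt C=-\frac{4}{aC}\bigl[B^{2}-(a-C)^{2}\bigr]+\frac{4}{aB}\bigl[C^{2}-(a-B)^{2}\bigr],
$$
and clearing the common denominator $BC$ the numerator factors as $(B-C)\bigl((B+C)^{2}-a^{2}\bigr)$, so that
$$
\ddt B-\ddt C=-\frac{4(B-C)\,(E^{2}-a^{2})}{aBC},\qquad\text{hence}\qquad \ddt F=-\frac{4\,(E^{2}-a^{2})}{aBC}\,F .
$$
Because $E=B+C\ge 2a>a$, the coefficient of $F$ is strictly positive, so $F$ keeps its sign and $|F|$ is strictly decreasing; choosing $\ve$ small enough that $F(0,\ve)>0$ we get $0<F(t,\ve)\le F(0,\ve)$ on $[0,T_\ve)$.

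Next I would bound the decay rate from below. From $a\le C\le B$ one has $BC\ge a^{2}$, hence $E^{2}-a^{2}=(B+C)^{2}-a^{2}\ge B^{2}+BC+C^{2}\ge BC$, and therefore
$$
\frac{E^{2}-a^{2}}{aBC}\ \ge\ \frac1a\ =\ \frac{1}{\ve A}.
$$
So everything reduces to an $\ve$-uniform upper bound for $A$ on $[0,T_{\max})$. For this, differentiate the first coefficient: (\ref{e.su2ric}) and $B-C=\ve F$ give $\ddt A=-\dfrac{4\ve}{BC}\,(A^{2}-F^{2})$, so $A$ is non-increasing wherever $A\ge F$. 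Since $F(t,\ve)\le F(0,\ve)\le\sup_{\ve}F(0,\ve)=:F_{*}<\infty$ (finite because $F(0,\ve)$ converges), a barrier argument for $A$ yields $A(t,\ve)\le\max\{A_{0},F_{*}\}=:M$ for all $t\in[0,T_\ve)$, with $M$ independent of $\ve$ (here one also uses that $A_{0}$ is bounded in $\ve$, which is built into the family $\{g_\ve\}$).

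Combining the last three displays, $\ddt F\le-\dfrac{4}{\ve M}\,F$ on $[0,T_\ve)$, so $F(t,\ve)\le F(0,\ve)\,e^{-4t/(\ve M)}$; since $F(0,\ve)$ stays bounded and $t\in(0,T_{\max})$ is fixed, the right-hand side tends to $0$ as $\ve\to0$, which is the assertion. The computational steps are routine; the one point that genuinely requires care is the $\ve$-uniform control of the collapsing coefficient $A$ (equivalently, that $a=\ve A\to0$ uniformly on compact time intervals), which is what makes the $\ve^{-1}$ decay rate legitimate — everything else is the factorization above plus a comparison for a linear ODE.
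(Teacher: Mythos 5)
Your proof is correct and takes essentially the same route as the paper's sketch: you derive the identical evolution equation $\ddt \log F = -\tfrac{4(E^2-a^2)}{aBC}$ (with $a=\ve A$ this is exactly the paper's displayed coefficient, which actually tends to $-\infty$, not $+\infty$ as written there) and conclude from the $\ve^{-1}$ blow-up of the damping rate. The one substantive addition — the $\ve$-uniform barrier bound $A\le\max\{A_0,F_*\}$ guaranteeing that the damping really is of order $\ve^{-1}$ uniformly on $[0,T_{\max})$ — is precisely the detail the paper's outline omits, and you carry it out correctly.
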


This proposition shows that there is a jump discontinuity at $t = 0$.

\begin{proof}
We just present here an outline. The evolution equation for
$F$ is given by
$$
\ddt \log F = \frac{4}{ABC} \left(  \ve A^2 - \frac{E^2}{\ve} \right).
$$
So, the proof follows from noticing that
$$
\frac{4}{ABC} \left(  \ve A^2 - \frac{E^2}{\ve} \right) \rightarrow \infty
\hspace{1cm} \text{as} \hspace{1cm} \ve \rightarrow 0.
$$
\end{proof}

\section{Ricci Flow of geometry with isotropy $SO(2)$}

The geometries contained in this group are
$S^2  \times \RR$, $\HH^2 \times \RR$, $Nil$ and $\tilde{SL(2, \RR)}$.
The former has a distinct behavior amongst the others: under the Ricci flow,
the $\RR$-factor
remains fixed, while the $S^2$-factor shrinks. Therefore, the solution
becomes singular in finite time and converge to a $1$-dimensional manifold.

For the remaining geometries, some of its directions get expanded by the flow,
while some others will converge to a finite value.
For this case, we will study in detail the Lie Group $Nil$, as all the geometric objects
are simple to compute, and it captures the main behavior of the Ricci flow on $SO(2)$ isotropy.

Let $G$ denote the $3$-dimensional group of matrices of the form
$$
\begin{pmatrix}
1 & x & z \cr
0 & 1 & y \cr
0 & 0 & 1
\end{pmatrix},
\hspace{1cm}
x, y, z \in \RR.
$$

The signature of a Milnor frame for $G$ is $\la = -1$,  $\mu = \nu = 0$.
As before, we can write any left-invariant metric $g$ on $G$
as
$$
g = A \om^1 \otimes \om^1 +
B \om^2 \otimes \om^2 +
C \om^3 \otimes \om^3
$$
with respect to a Milnor frame of 1-forms $\{\om_i \}$.

Also, the reader can check that the Ricci tensor of the metric $g$ will be given by
$$
\Ric = \frac{2A^2}{BC} \om^1 \otimes \om^1
-\frac{2A}{C} \om^2 \otimes \om^2
-\frac{2A}{B} \om^3 \otimes \om^3.
$$
Hence, the evolution equations for $A, B$ and $ C$ are given by
$$
\ddt A  = \frac{-4A^2}{BC}; \hspace{0.5cm}
\ddt B  = \frac{4A}{C}; \hspace{0.5cm}
\ddt C  = \frac{4A}{B}.
$$

We are lucky enough to have explicit solutions to this system,
given by
\begin{eqnarray}
A(t) &=& A_0^{2/3} B_0^{1/3} C_0^{1/3}\left( 12t + B_0 C_0/A_0\right)^{-1/3} \\
B(t) &=& A_0^{1/3} B_0^{2/3} C_0^{-1/3}\left( 12t + B_0 C_0/A_0\right)^{1/3} \\
C(t) &=& A_0^{1/3} B_0^{-1/3} C_0^{2/3}\left( 12t + B_0 C_0/A_0\right)^{1/3}
\end{eqnarray}

So, we can see explicitly that, for the $Nil$ geometry,
the Ricci flow will expand two of the directions determined
by the Lie algebra, while the other will converge to a fixed value.
As a corollary, we see that any compact manifold with $Nil$ geometry
will converge, via the normalized Ricci flow, to  $\RR^2$ endowed with the
flat metric.

\section{Ricci Flow of geometry with trivial isotropy }

The only geometry in this category is $Sol$, and the signature of
a Milnor frame on this group is $\la = -1$, $\mu = 0$ and $\nu = 1$.
Using this quantities, we obtain the following evolution equations for the
coefficients of a left-invariant metric $g$ (with respect to a Milnor frame):
\begin{eqnarray}
\ddt A & = & 4 \frac{C^2 - A^2}{BC} \\
\ddt B & = & 4 \frac{C^2 + A^2}{AC} \\
\ddt C & = & 4 \frac{-C^2 + A^2}{AB}.
\end{eqnarray}

Note that $C \ddt A + A \ddt C  = 0$, so the quantity $AC$ is preserved. So, we can
define $G = A/C$, and rewrite the system above as
\begin{eqnarray}
\ddt B & = & 8 + 4 \frac{1 + G^2}{G} \\
\ddt G & = & 8\frac{1 - G^2}{B}.
\end{eqnarray}

We have that $\ddt B \geq 16$, so $B(t) \rightarrow \infty$.
It is also true that $G(t) \rightarrow 1$ as $t \rightarrow \infty$, and in fact,
both $A$ and $C$ will converge to the same value.

Therefore, our conclusion is that the solution $g(t)$
to the Ricci flow exists for all times. Furthermore, if we consider the
normalized Ricci flow on a compact $3$ manifold modeled on $Sol$, we will
observe convergence to $\RR$ in the Gromov-Hausdorff sense.

\chapter{Ricci Flow on Surfaces}

In this chapter, we will explain the behavior of the Ricci flow
on compact surfaces. In dimension $2$, the Ricci flow equation is simply given
by
\beeq
\label{e.RFsurface}
\ddt g(t) = - R g.
\eeeq

In order to keep the total area $A$ of the surface constant, we will introduce an
extra constant in the equation, obtaining the {\bf normalized Ricci flow for surfaces}:
\beeq
\label{e.NRFsurface}
\ddt g(t) = (r - R) g,
\eeeq
where $r = \frac{4 \pi \chi(M)}{A}$ \footnote{This is a direct application of Gauss-Bonnet theorem.} is the average of the scalar curvature on the surface.

Note that, at each point, the rate of change is a multiple of the metric, so
we will be flowing the metric inside its conformal class. In fact, this equation makes sense
even in higher dimensions, and is known as the {\bf Yamabe flow}. The Yamabe
flow is the gradient flow
related to the so-called
{\bf  Yamabe problem}: fixed a conformal class of a metric on a compact
manifold, as well as its volume, is there a metric of
 constant scalar curvature in the fixed
conformal class?

In a sense, the $2$-dimensional Ricci flow resembles much more of the character of the Yamabe
flow. Nevertheless, we hope that the study of the Ricci flow on surfaces will be a good introduction to
the general and far more complicated case of $3$ dimensions.

The goal of this chapter is to prove the following results.

\begin{thm}{\bf (Hamilton)}
For any initial data, the solution to the normalized Ricci flow equation
(\ref{e.NRFsurface}) exists for all times.
\end{thm}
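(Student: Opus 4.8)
The plan is to reduce (\ref{e.NRFsurface}) to a single scalar parabolic PDE, invoke short-time existence, and then rule out finite-time blow-up by an a priori bound on the scalar curvature. Since in dimension two the flow preserves the conformal class of $g_0$, I would fix a background metric $g_0$ and write $g(t) = e^{v(t)}g_0$. Using the conformal change formula $R_{g(t)} = e^{-v}(R_0 - \Delta_{g_0}v)$, equation (\ref{e.NRFsurface}) becomes
\[
\ddt v = e^{-v}\Delta_{g_0}v + r - e^{-v}R_0 ,
\]
a quasilinear scalar equation on the closed surface $M$, uniformly parabolic as long as $v$ is bounded; because we stay within a conformal class no DeTurck-type modification is needed. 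Standard quasilinear parabolic theory then yields a unique smooth solution $v(t)$, hence a unique smooth solution $g(t)$ of (\ref{e.NRFsurface}), on a maximal interval $[0,T)$ with $0 < T \le \infty$.

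The claim is that $T = \infty$. The mechanism I would use is the continuation principle: \emph{if $\sup_{M\times[0,T)}|R| < \infty$ then $T$ cannot be finite}. Indeed $\ddt v = r - R$, so such a bound gives $|v(\cdot,t) - v(\cdot,0)| \le (|r| + \sup|R|)\,t$, which on a finite interval bounds $v$ in $C^0$; the equation above is then uniformly parabolic with bounded coefficients, and parabolic regularity (De Giorgi--Nash--Moser followed by Schauder, then bootstrapping) gives uniform $C^\infty$ bounds on $v$ up to $t=T$. Hence $g(t)$ converges smoothly to a metric $g_T$ as $t\to T^-$, and short-time existence restarts the flow from $g_T$, contradicting maximality of $T$. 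So everything reduces to a uniform bound on $|R|$ over finite time intervals.

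For that I would use the evolution equation, which in dimension two (where $\Ric = \tfrac12 R\,g$) reads $\ddt R = \Delta R + R(R-r)$ with $\Delta = \Delta_{g(t)}$. The \emph{lower} bound is the easy half: at a spatial minimum $\Delta R \ge 0$, so $\rho(t) := \min_M R(\cdot,t)$ satisfies $\rho' \ge \rho(\rho-r)$, and since $\rho(0) \le r$ (as $r$ is the average of $R$), comparison with the scalar ODE $\dot s = s(s-r)$ shows $\rho(t)$ stays bounded below for all $t$, in each of the cases $r<0$, $r=0$, $r>0$. The \emph{upper} bound is the heart of the matter; the crude comparison fails here because $R(R-r)$ is positive, and can force finite-time blow-up, once $R$ is large. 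I would bring in Hamilton's Ricci potential: by Gauss--Bonnet $\int_M(R-r)\,dA = 0$, so there is $f=f(\cdot,t)$ with $\Delta f = R - r$, and a short computation (using $\ddt \Delta_{g(t)} = (R-r)\Delta_{g(t)}$ in two dimensions) shows $f$ can be chosen to evolve by $\ddt f = \Delta f + rf$. Then the auxiliary quantity
\[
H := R - r + |\nabla f|^2
\]
satisfies, via the Bochner formula together with the two-dimensional pointwise inequality $|\nabla^2 f|^2 \ge \tfrac12(\Delta f)^2 = \tfrac12(R-r)^2$, the differential inequality $\ddt H \le \Delta H + rH$. By the maximum principle $\max_M H(\cdot,t)$ grows at most exponentially in $t$, hence is finite on every finite interval; since $|\nabla f|^2 \ge 0$ this gives $R - r \le H \le \max_M H$, the desired upper bound.

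Combining the two bounds, $|R|$ is uniformly bounded on $[0,T)$ whenever $T<\infty$, so by the continuation principle $T=\infty$, which is the assertion. The step I expect to be the main obstacle is the a priori \emph{upper} bound on $R$: choosing the quantity $H$ and noticing that the trace Cauchy--Schwarz inequality $|\nabla^2 f|^2 \ge \tfrac12(\Delta f)^2$ is exactly what kills the bad reaction term are the non-obvious ingredients, while the remainder is short-time existence plus routine maximum-principle and parabolic-regularity bookkeeping. (The argument above is uniform in $\chi(M)$; what genuinely requires more --- the Li--Yau--Hamilton Harnack inequality and Chow's entropy --- is the convergence of the normalized flow as $t\to\infty$ on the sphere, not long-time existence itself, and I would refer to \cite{Ben1} for that.)
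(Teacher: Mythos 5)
Your proposal is correct and follows essentially the same route as the paper: the quantity $H = R - r + |\nabla f|^2$ is exactly the paper's $h = \Delta f + |\nabla f|^2$ built from the Ricci potential normalized so that $\ddt f = \Delta f + rf$, and your inequality $\ddt H \le \Delta H + rH$ is the paper's identity $\ddt h = \Delta h - 2|M|^2 + rh$ after discarding the nonpositive term $-2|M|^2 = -2\bigl(|\nabla^2 f|^2 - \tfrac12(\Delta f)^2\bigr)$, which is precisely your trace Cauchy--Schwarz step. The only material you add is an explicit short-time existence and continuation argument via the conformal-factor equation, which the paper leaves implicit.
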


\begin{thm}{\bf (Hamilton)}
If $r \leq 0$, the metric converges to a constant curvature metric.
\end{thm}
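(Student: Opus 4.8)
The plan is to analyze the evolution equation for the scalar curvature $R$ under the normalized flow (\ref{e.NRFsurface}) and extract enough control to pass to a limit. Since in dimension $2$ we have $\Ric = \frac{1}{2}Rg$, a direct computation shows that under (\ref{e.NRFsurface}) the scalar curvature satisfies a reaction-diffusion equation of the form
\beeq
\label{e.Revol}
\ddt R = \De R + R^2 - rR,
\eeeq
where $\De$ is the (time-dependent) Laplacian of $g(t)$. The first step is therefore to derive (\ref{e.Revol}) carefully, keeping track of the fact that the metric, and hence the Laplacian, is evolving.

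The key step is a maximum principle argument applied to (\ref{e.Revol}). When $r \leq 0$, I would compare $R$ with the solutions of the associated ODE $\phi' = \phi^2 - r\phi$. At the spatial minimum of $R$ we get $\ddt \Rmin \geq \Rmin^2 - r\Rmin = \Rmin(\Rmin - r)$; since Gauss-Bonnet forces $r \leq 0$ exactly when $\chi(M) \leq 0$, the initial minimum satisfies $\Rmin(0) \leq r \leq 0$ (the average cannot be below the minimum), and one checks the ODE comparison yields $\Rmin(t) \geq \frac{r}{1 - c e^{rt}}$ for an appropriate constant $c$ determined by $\Rmin(0)$, so $\Rmin(t) \to r$ exponentially as $t \to \infty$, with a lower bound of the form $R - r \geq -C e^{rt}$ (or $-C/t$ in the borderline case $r = 0$). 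An upper bound $\Rmax(t) \leq$ some decaying-to-$r$ quantity follows symmetrically from the same ODE comparison at the maximum. This pins down $R \to r$ uniformly and exponentially (polynomially when $r=0$).

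With $R$ converging to the constant $r$, the right-hand side $(r - R)g$ of (\ref{e.NRFsurface}) tends to zero, so one expects $g(t)$ itself to converge. To make this rigorous I would: (i) use the $C^0$ control on $R-r$ to bound the conformal factor relating $g(t)$ to a fixed reference metric, showing the metrics stay uniformly equivalent; (ii) bootstrap via the evolution equations for the derivatives $\nabla^k R$ — each satisfies a parabolic equation with lower-order terms that are controlled once $R$ is — to obtain uniform bounds on all derivatives of the curvature, hence on $\|g(t)\|_{C^k}$ for every $k$; (iii) conclude by Arzel\`a-Ascoli and the exponential decay that $g(t)$ converges in $C^\infty$ to a metric $g_\infty$ with $R \equiv r$ constant, i.e. a constant-curvature metric in the conformal class of $g_0$.

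The main obstacle is step (ii): upgrading the pointwise convergence of $R$ to uniform higher-order estimates on the metric. The derivative evolution equations pick up curvature terms and commutator terms from the evolving connection, so one needs an inductive scheme — a Bernstein-type argument bounding $|\nabla R|^2$ first (via the maximum principle applied to a suitable combination like $t|\nabla R|^2$ or $|\nabla R|^2 e^{-rt}$), then $|\nabla^2 R|^2$, and so on — carefully exploiting the exponential-in-time decay of $R - r$ to ensure the bounds do not deteriorate as $t \to \infty$. The case $r = 0$ is slightly more delicate since the decay is only polynomial, so the estimates must be arranged to tolerate that slower rate.
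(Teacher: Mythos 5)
There is a genuine gap, and it sits at the exact point your proposal waves through: the claim that ``an upper bound $\Rmax(t) \leq$ some decaying-to-$r$ quantity follows symmetrically from the same ODE comparison at the maximum.'' It does not. The comparison ODE for (\ref{e.Rsurface}) is $\phi' = \phi(\phi - r)$, and when $r \leq 0$ its solutions with $\phi(0) > 0$ blow up in finite time (for $r=0$ this is $\phi' = \phi^2$). Since $\Rmax(0) \geq r$ and in general $\Rmax(0) > 0$ even when $\chi(M) < 0$ (nothing prevents the initial metric from having regions of positive curvature), the maximum principle applied at the spatial maximum yields only a finite-time upper barrier; it gives neither long-time existence nor convergence. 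The asymmetry is real: the lower bound works because $\Rmin(0) \leq r \leq 0$ puts the comparison ODE in the basin of the equilibria $\{r, 0\}$, while the upper bound starts on the unstable side. This is precisely why the paper, after the proposition giving the lower bounds, says the maximum principle has been exhausted and introduces the Ricci potential $f$ with $\De f = R - r$ and the quantity $h = \De f + |\gr f|^2$, whose evolution $\ddt h = \De h - 2|M|^2 + rh$ gives $h \leq Ce^{rt}$ and hence the key estimate $R \leq Ce^{rt} + r$ of Theorem \ref{t.surfacebound}. For $r<0$ that estimate shows $R$ eventually becomes strictly negative, at which point one restarts the flow and Corollary \ref{c.chineg} (your ODE comparison, now legitimately applicable) gives exponential convergence. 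Without some substitute for this potential-function argument, your proof does not get off the ground.

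A second, smaller issue: your treatment of $r = 0$ as ``slightly more delicate since the decay is only polynomial'' misreads where the difficulty lies. For $r=0$ the potential estimate only gives $-C \leq R \leq C$ with no decay of the upper bound at all, so no pointwise comparison argument produces $R \to 0$. The paper's route is entirely different there: write $g = e^{-2u}h$ with $h$ flat (solving $\De u = R$), bound $u$ uniformly by the maximum principle applied to $\ddt u = e^{2u}\De_h u$, and then run $L^2$ energy estimates on $\gr u$, $R$, $\gr R$, $\gr^2 R$ against the fixed flat background, using the Poincar\'e inequality to produce exponential decay and Sobolev embedding to convert it into $\sup |R| \to 0$. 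Your step (ii)/(iii) bootstrapping outline is the right general shape for the endgame once decay of $R - r$ is actually established, but the decay itself is the content of the theorem and your proposal does not supply it.
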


\begin{thm}{\bf (Hamilton)}
If $R > 0$, the metric converges to a constant curvature metric.
\end{thm}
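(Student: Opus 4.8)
The plan is to follow Hamilton's original argument for surfaces of positive curvature, establishing convergence of the normalized flow (\ref{e.NRFsurface}) to a round metric. Recall that under (\ref{e.NRFsurface}) the scalar curvature evolves by $\ddt R = \Delta R + R^2 - rR$, and that the condition $R>0$ is preserved by the maximum principle (indeed $r>0$ here since $\chi(M)>0$ forces $M=S^2$ or $\RR\PP^2$). Combining the maximum principle applied to $\ddt R$ with the ODE comparison $\frac{d}{dt}\Rmin \geq \Rmin^2 - r\Rmin$ shows first that $R$ stays bounded below by a positive constant for all time, and hence $\Rmin \to r$ at an exponential rate; a matching bound from above for $\Rmax$ follows once a uniform upper bound for $R$ is in hand.

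The first main step is therefore to bound $R$ from above uniformly in time. The standard device is to introduce the potential function $f$ solving $\Delta f = R - r$ (normalized, say, $\int f\, d\mu = 0$) and the trace-free part of its Hessian, $M_{ij} = \nabla_i\nabla_j f - \frac12(\Delta f) g_{ij}$. One computes the evolution equation for $|M|^2$ (or equivalently for the quantity $h = R - |\nabla f|^2$ or for $|\nabla f|^2$ itself) and finds a reaction–diffusion inequality with a favorable sign, so that by the maximum principle $|M|^2 \leq C e^{-\delta t}$; this is essentially the statement that the metric is asymptotically a gradient Ricci soliton, and on $S^2$ the only such is the round one. In particular this yields the needed uniform bound on $R$, after which long-time existence is immediate (the flow on a surface can only fail to exist globally if $\Rmax$ blows up, which the estimate above precludes).

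The second step is to upgrade $\Rmin \to r$, $\Rmax \to r$ and $|M|^2\to 0$ to smooth, exponential convergence of the metrics $g(t)$ themselves to a metric $g_\infty$ of constant curvature $r/2$. Here one uses the exponential decay of $R - r$ to control $\ddt g = (r-R)g$ in $C^0$, bootstraps using the decay of $|M|^2$ and interpolation / Shi-type derivative estimates to get decay of all covariant derivatives of curvature, and concludes that $g(t)$ is Cauchy in every $C^k$ norm, with limit a round metric. Uniformization of $S^2$ (that the limit is the standard round sphere up to diffeomorphism) then follows.

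The main obstacle is the uniform upper bound on $R$: unlike the $r \leq 0$ case, one cannot get it from a naive scalar maximum principle, because the term $R^2$ in $\ddt R$ is superlinear and a priori could drive blow-up. The entire weight of the argument rests on finding the right auxiliary quantity — the modified Hessian $M_{ij}$, equivalently the "entropy"-type or soliton-type monotone quantity — whose evolution equation has the reaction term with the correct sign so that the maximum principle applies. I would spend most of the effort setting up that computation carefully (keeping track of the $\nabla f$ cross-terms, which is where sign miracles happen), and treat the subsequent convergence-of-metrics step as comparatively routine once the curvature is pinched and bounded.
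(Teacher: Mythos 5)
Your outline matches the paper's argument in its outer stages (the soliton potential $f$, the trace-free Hessian $M_{ij}$, the modified flow, and the identification of the limit as a gradient soliton, hence round), but it has a genuine gap exactly at the step you yourself flag as the crux: the uniform upper bound on $R$. The quantity you propose to use for it is the soliton quantity $h = \Delta f + |\nabla f|^2$ (equivalently $|M|^2$), but its evolution is $\ddt h = \Delta h - 2|M|^2 + rh$, and for $r>0$ the reaction term $+rh$ has the \emph{unfavorable} sign: the maximum principle yields only $h \leq Ce^{rt}$, hence $R \leq Ce^{rt}+r$. That is enough for long-time existence (this is Theorem \ref{t.surfacebound}, which works for any sign of $r$) but it is not a uniform bound when $r>0$. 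Moreover, the exponential decay $|M_{ij}|\leq Ce^{-ct}$ that you want to invoke is itself conditional on already having a uniform positive lower bound $R\geq c>0$, and that lower bound (away from $0$, not just positivity) is in turn extracted from the uniform upper bound. So your scheme is circular: the "sign miracle" you are hoping for in the $M_{ij}$ computation does not occur for $r>0$.

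What actually closes the gap in Hamilton's argument, and in the paper, is a different pair of tools: (i) the surface version of the Li--Yau Harnack inequality, obtained by applying the maximum principle to $Q = \ddt \log R - |\nabla \log R|^2$, which satisfies $\ddt Q \geq \Delta Q + 2\ip{\nabla \log R, \nabla Q} + Q^2 + rQ$ and hence $Q \leq -re^{rt}/(e^{rt}-1)$, giving $(e^{r\tau}-1)R(\xi,\tau) \leq e^{\Delta/4}(e^{rT}-1)R(X,T)$; and (ii) the entropy monotonicity $\frac{d}{dt}\int R\log R\, d\mu \leq 0$. Combined with a doubling-time estimate for $\Rmax$ and Klingenberg's injectivity radius bound, these give the two-sided uniform bound $c \leq R \leq C$, after which the $|M_{ij}|$ decay, the modified flow, and the convergence go through as you describe. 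Finally, note that identifying the limiting gradient soliton on $S^2$ as a constant-curvature metric is not automatic: the paper does it via the Kazdan--Warner identity (the one point where uniformization of $S^2$ is implicitly used), a step your proposal passes over.
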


An important observation is that from the work of Hamilton, one
cannot deduce directly the Uniformization Theorem for surfaces. Even though it
treats successfully the cases where $\chi(M) < 0$ and $\chi(M) = 0$, it uses the fact that
there exists a metric of constant positive curvature on $S^2$.

Recently, however, Chen, Lu and Tian \cite{XXChen-Lu-Tian} provided a pure Ricci flow proof of the
Uniformization conjecture. We will come back to this at the end of the chapter.

\section{Some estimates}

In order to study the Ricci flow on surfaces, we start by developing the
evolution equation for the scalar curvature $R(t)$ of the metric $g(t)$. As
previously noted, the metric $g(t)$ will be always a conformal deformation of the
initial metric $g_0$. So, it is useful to develop the relation between the scalar
curvatures of two conformal metrics.

\begin{lemma}
If $g $ and $h = e^{2u}g$ are two conformal metrics, then
\beeq
\label{e.conformalscalarrelation}
R_h = e^{-2u}(-2\De_g u + R_g)
\eeeq
\end{lemma}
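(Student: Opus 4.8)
The plan is to work in dimension $2$ and compute directly how the scalar curvature transforms under the conformal change $h = e^{2u}g$. Since the formula is local, I would fix a point $p$ and choose coordinates that are conveniently adapted to $g$ at $p$ (for instance normal coordinates, so that at $p$ the metric is the identity and its first derivatives vanish); this kills the bulk of the Christoffel-symbol bookkeeping. In dimension $2$ the scalar curvature determines the full curvature tensor, so it suffices to track a single component, and the Laplace–Beltrami operator $\De_g$ of $g$ is the natural object that appears. The key identity I want is the relation between the Gauss curvatures $K_g = \tfrac12 R_g$ and $K_h = \tfrac12 R_h$ of two conformally related metrics on a surface, which after multiplying by $2$ gives exactly $R_h = e^{-2u}(-2\De_g u + R_g)$.

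Here are the steps in order. First, write $h_{ij} = e^{2u} g_{ij}$ and compute the Christoffel symbols $\Gamma(h)$ in terms of $\Gamma(g)$ and the derivatives of $u$; the correction terms are of the schematic form $\delta_i^k \partial_j u + \delta_j^k \partial_i u - g_{ij} g^{k\ell}\partial_\ell u$. Second, plug these into the curvature formula $R(X,Y)Z = \nabla_X\nabla_Y Z - \nabla_Y\nabla_X Z - \nabla_{[X,Y]}Z$ to get the Riemann tensor of $h$; carefully collecting terms, the second derivatives of $u$ assemble into the Hessian of $u$ with respect to $g$, and the first-derivative terms combine into $|\nabla u|_g^2$ pieces. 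Third, contract to obtain the Ricci and then the scalar curvature; in dimension $2$ the $|\nabla u|^2$ terms cancel — this is the feature special to surfaces — and one is left with $R_h = e^{-2u}(R_g - 2\De_g u)$, where $\De_g u = g^{ij}\nabla_i\nabla_j u$ is the (analyst's sign convention) Laplacian. Fourth, double-check the sign convention for $\De_g$ so that it is consistent with the rest of the notes (the flow equations written earlier use $\De R$ as the heat-type operator, i.e. $\De$ with nonnegative spectrum on $-\De$).

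The main obstacle is purely computational: organizing the conformal-change terms in the connection and Riemann tensor without sign errors, and verifying the dimension-$2$ cancellation of the gradient terms (in general dimension $n$ one instead gets the Yamabe-type formula with an $e^{2u}$ factor and an $(n-2)$-dependent gradient term, so the clean form here relies essentially on $n=2$). A slick alternative that sidesteps most of this is to use the well-known transformation law $K_h = e^{-2u}(K_g - \De_g u)$ for Gauss curvature under a conformal change on a surface — available from any Riemannian geometry text — and simply multiply by $2$; I would present the direct computation but remark that this shortcut is available. No genuinely hard analytic input is needed; the statement is an algebraic identity once the conformal connection change is in hand.
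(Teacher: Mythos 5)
Your proposal is correct, but it takes a different route from the one in the notes. You compute the conformal change of the Christoffel symbols in coordinates, assemble the Riemann tensor, and contract, observing that the $|\nabla u|_g^2$ terms cancel precisely because $n=2$ (the general-dimension formula carries an $(n-1)(n-2)|\nabla u|^2$ term, so the cancellation you flag is exactly the right thing to check). The paper instead uses moving frames: it takes an orthonormal coframe $\{\om^1,\om^2\}$ for $g$, rescales to $\eta^i = e^u\om^i$ for $h$, solves the Cartan structure equations for the single connection $1$-form $\eta^1_2$, and reads off $(\Om_h)^1_2 = (\Om_g)^1_2 - \De_g u\,\om^1\wedge\om^2$, whence $R_h = e^{-2u}(R_g - 2\De_g u)$. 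The moving-frame argument is shorter in dimension $2$ because there is only one connection form and one curvature form to track, and the dimension-$2$ cancellation never has to be observed explicitly; your coordinate computation is more elementary, requires no frame machinery, and generalizes immediately to the $n$-dimensional Yamabe-type transformation law, at the cost of heavier index bookkeeping. Your remark about checking the sign convention of $\De_g$ is well taken, since the notes use the analyst's (nonpositive) Laplacian throughout, consistent with the heat-type evolution equations. Either argument is acceptable; just be sure that if you present the direct computation you actually carry out the contraction rather than asserting the cancellation, or else cite the standard Gauss-curvature transformation law $K_h = e^{-2u}(K_g - \De_g u)$ explicitly as your shortcut.
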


Before we present the proof of the lemma above, let us recall  the technique of {\bf moving frames}.
 Let $\{\om_1, \om_2 \}$ be a
coframe field, chosen to be orthonormal with respect to the metric $g$. So,
we can write the metric $g$ as
$$
g = \om^1 \otimes \om^1 + \om^2 \otimes \om^2.
$$

Let $\{f_1, f_2\}$ be the dual frame ({\em i.e.}, $f_i \om^j = \de^j_i$).

We define the {\bf connection 1-forms} $\om^j_i$ by the following relation:
let $X$ be a vector field. Then,
$$
\nabla_X f_i = \om^j_i f_j.
$$

Using this, we can write the Cartan structure equations for a surface as
\begin{eqnarray}
\label{e.Cartan}
d\om^1 & = & \om^2 \wedge \om^1_2; \\
d\om^2 & = & \om^1 \wedge \om^2_1; \\
\Om^1_2 &=& d \om^1_2.
\end{eqnarray}

\begin{proof}
As before, let  $\{ f_1, f_2\}$ be a orthonormal frame for $g$,
dual to $\{\om_1, \om_2 \}$. Then
$$
e_1 = e^{-u} f_1  \hspace{2cm} e_2 = e^{-u}f_2
$$
is  an orthonormal moving frame for $h$. Call
$$
\eta_1 = e^{u} \om_1  \hspace{2cm} \eta_2 = e^{u}\om_2
$$
the coframe for $h$. The proof follows from:
\beit
\item
Writing $d\eta^1$ and $d\eta^2$ in terms of $u$, $\om^1$ and $\om^2$;
\item
Computing
$$
\eta^1_2 = d\eta^1(e_2,e_1)\eta^1 +  d\eta^2(e_2,e_1)\eta^2,
$$
and applying to the second structure equation of Cartan (\ref{e.Cartan});
\item
Noticing that, in an orthonormal frame, $\De_g u = \nabla^2_{f_1,f_1} u + \nabla^2_{f_2,f_2} u $,
which yields
$$
(\Om_h)^1_2 = (\Om_g)^1_2 - \De_g u \om^1 \wedge \om^1;
$$
\item
Finally, recalling that
$$
R_h = 2(\Om_h)^1_2 = e^{-2u} (R_g - 2 \De_g u).
$$
\eeit

\end{proof}

Now, we study the evolution of the curvature $R(t)$ under the Ricci flow.

\begin{lemma}
If $g(t)$ is a $1$-parameter family of metrics on a surface $M^2$ such that
$$
\ddt g = f g
$$
for some function $f$, then
$$
\ddt R = -\De f - Rf.
$$
In particular, if $f = r-R$ (the normalized Ricci flow equation), then
\beeq
\label{e.Rsurface}
\ddt R = \De R + R(R-r).
\eeeq
\end{lemma}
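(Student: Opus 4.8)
The plan is to compute $\ddt R$ directly from the conformal variation formula of the previous lemma, treating the time derivative of the metric as an infinitesimal conformal change. Concretely, if $g(t)$ evolves by $\ddt g = f g$, then over a short time $\ddt$ the metric changes conformally with conformal factor $e^{2u}$ where $2u = f\,\ddt$ to first order, i.e. $u \approx \tfrac{1}{2} f\,\ddt$. Applying \eqref{e.conformalscalarrelation} with this infinitesimal $u$, we get $R(t+\ddt) = e^{-2u}(R(t) - 2\De_g u) \approx (1 - f\,\ddt)(R(t) - \De_g f\,\ddt)$. Expanding and keeping only first-order terms in $\ddt$ yields $\ddt R = -\De f - Rf$. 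One subtlety to address here is that the Laplacian $\De_g$ itself depends on $t$, but since we are differentiating at a fixed time and the correction to $\De$ is already multiplied by $u \sim \ddt$, it contributes only at second order and may be ignored; I would remark on this briefly.

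For a fully rigorous version avoiding the heuristic "$u = \tfrac12 f\,\ddt$" language, I would instead fix $t_0$, write $g(t) = e^{2u(t)}g(t_0)$ near $t_0$ (which is legitimate since the flow stays in a conformal class), so that $u(t_0) = 0$ and $\ddt u\big|_{t_0} = \tfrac12 f$. Then \eqref{e.conformalscalarrelation} gives $R(t) = e^{-2u(t)}\big(R_{g(t_0)} - 2\De_{g(t_0)} u(t)\big)$ as an exact identity; differentiating this in $t$ at $t_0$, using $u(t_0)=0$, gives $\ddt R\big|_{t_0} = -2\,\ddt u\big|_{t_0}\cdot R_{g(t_0)} - 2\De_{g(t_0)}\big(\ddt u\big|_{t_0}\big) = -fR - \De f$, which is the claim.

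Specializing to $f = r - R$, we substitute: $\ddt R = -\De(r-R) - R(r-R) = \De R - R(r - R) = \De R + R(R-r)$, since $r$ is constant in space so $\De r = 0$. This gives \eqref{e.Rsurface}. I do not anticipate a genuine obstacle here; the only point requiring a little care is justifying that the $t$-dependence of $\De_g$ does not enter the first-order computation, which the second formulation handles cleanly by freezing the background metric at $t_0$.
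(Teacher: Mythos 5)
Your proposal is correct and follows essentially the same route as the paper: both write the evolving metric as a conformal deformation of a fixed background and differentiate the conformal scalar-curvature relation (\ref{e.conformalscalarrelation}) in time. Your second, "frozen at $t_0$" formulation is in fact a cleaner version of the paper's argument, since it gets the factor of $2$ in $\ddt u = \tfrac12 f$ right and makes explicit why the time-dependence of $\De_g$ does not contribute.
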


\begin{proof}
We have already seen the relation between the scalar curvatures
of two conformal metrics $g$ and $h = e^{-2u}g$ is given by (\ref{e.conformalscalarrelation}).
So, in that case, if  $\ddt g = f g$, then $\ddt u = f$. Differentiating
(\ref{e.conformalscalarrelation}) with respect to time, we obtain
$$
\ddt R_h = - (\ddt u) e^{-u}(-\De_g u + R_g) - e^{-u} \De_g(\ddt u) =  -\De_h f - R_h f.
$$

\end{proof}

\subsection{Scalar Maximum Principle}

At this point, we will start getting into the spirit of geometric analysis. We would like to
establish some bounds for the scalar curvature of a metric running under the Ricci flow.
Our strategy is to compare a solution of the PDE
$$
\ddt R = \De R + R(R-r)
$$
with the solution of the associated ODE to this problem, obtained by ignoring the Laplacian
term on the equation above. In order to do so, we need to use a version of the scalar maximum
principle for a heat equation with non-linear reaction term.

For completeness, we state here the version of the Maximum Principle that we will use.
Consider the following (more general) equation for the evolution of a scalar quantity $v$:
\beeq
\label{e.scalarmaxprinc}
\ddt v = \De_{g(t)}v + F(v),
\eeeq
where $g(t)$ is a $1$-parameter family of metrics, and $F: \RR \rightarrow \RR$ is a locally
Lipschitz function.

The function $u$ is said to be a {\bf supersolution} to (\ref{e.scalarmaxprinc})
if
$$
\ddt u \geq \De_{g(t)}u + F(u)
$$
and a {\bf subsolution} if
$$
\ddt u \geq \De_{g(t)}u + F(u).
$$

\begin{thm}
{\bf (Scalar Maximum Principle) }
Let $u$ be a $C^2$ supersolution to (\ref{e.scalarmaxprinc}) on a closed
manifold $M$.
Suppose that there exists $C_1 > 0$ such that $u(x,0) \geq C_1$ for all $x$ in $M$,
and let $\phi_1$ be the solution to the ODE
$$
\ddt \phi_1 = F(\phi_1),
$$
with initial condition $\phi_1(0) = C_1$.

Then, $u(x, t) \geq \phi_1(t)$ for all $x \in M$, and for all $t$ such that
$\phi_1$ exists.

Analogously, let $u$ be a subsolution to (\ref{e.scalarmaxprinc}).
Assume that there exists $C_2 > 0$ such that $u(x,0) \leq C_2$ for all $x$ in $M$,
and let $\phi_2$ be the solution to the ODE
$$
\ddt \phi_2 = F(\phi_2),
$$
with initial condition $\phi_2(0) = C_2$.

Then, $u(x, t) \leq \phi_2(t)$ for all $x \in M$, and for all $t$ such that
$\phi_2$ exists.

\end{thm}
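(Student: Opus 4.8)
The plan is to prove the supersolution statement (the subsolution case being entirely analogous after reversing inequalities) by a barrier/comparison argument using a perturbed ODE solution. The key idea is that $u - \phi_1$ is a supersolution of a linear-looking equation with a potential term controlled by the local Lipschitz constant of $F$, so the ordinary (weak) parabolic maximum principle on a closed manifold applies once we have taken care of two technical points: handling the nonlinearity $F$ (which forces us to work on a time interval where everything stays in a fixed bounded range, so the Lipschitz bound is uniform), and handling the possibility of equality at the initial time (which forces the standard trick of perturbing the ODE data downward by $\varepsilon$).

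First I would fix a time $T_0 < T^*$, where $T^*$ is the maximal existence time of $\phi_1$, and restrict attention to $t \in [0, T_0]$. On $M \times [0, T_0]$ the function $u$ is continuous on a compact set, hence bounded, and $\phi_1$ is bounded; let $I$ be a compact interval containing the ranges of both. Let $L$ be a Lipschitz constant for $F$ on $I$. Next I would introduce, for each $\varepsilon > 0$, the solution $\phi_1^\varepsilon$ of $\ddt \phi_1^\varepsilon = F(\phi_1^\varepsilon)$ with $\phi_1^\varepsilon(0) = C_1 - \varepsilon$; by continuous dependence on initial data, $\phi_1^\varepsilon$ exists on $[0, T_0]$ for $\varepsilon$ small and converges uniformly to $\phi_1$ as $\varepsilon \to 0$. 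Set $w = u - \phi_1^\varepsilon$. Then $w(\cdot, 0) = u(\cdot, 0) - (C_1 - \varepsilon) \geq \varepsilon > 0$, and
$$
\ddt w \;=\; \ddt u - F(\phi_1^\varepsilon) \;\geq\; \De_{g(t)} u + F(u) - F(\phi_1^\varepsilon) \;=\; \De_{g(t)} w + \bigl(F(u) - F(\phi_1^\varepsilon)\bigr).
$$
Writing $F(u) - F(\phi_1^\varepsilon) = b(x,t)\, w$ with $|b| \leq L$ wherever $u, \phi_1^\varepsilon \in I$, we get $\ddt w \geq \De_{g(t)} w + b w$.

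The heart of the argument is then the claim that $w > 0$ on all of $M \times [0, T_0]$. Suppose not; since $w$ is continuous and $M \times [0, T_0]$ is compact, there is a first time $t_1 \in (0, T_0]$ and a point $x_1$ with $w(x_1, t_1) = 0$ and $w \geq 0$ on $M \times [0, t_1]$. To make the maximum principle bite cleanly I would pass to $v = e^{(L+1)t} w$, which satisfies $\ddt v \geq \De_{g(t)} v + (b - L - 1) v$ with $b - L - 1 \leq -1 < 0$; then at an interior spatial minimum of $v(\cdot, t_1)$ attained at $x_1$ with value $0$, we have $\De v \leq 0$ and $(b-L-1)v = 0$, so $\ddt v(x_1, t_1) \leq 0$, forcing $v$ — hence $w$ — to have been negative just before $t_1$, contradicting minimality of $t_1$ (a standard argument; one makes this rigorous by the usual device of first showing $v + \eta(1+t) > 0$ for every $\eta > 0$ and then letting $\eta \to 0$, which avoids any delicacy about where the first zero occurs). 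Therefore $u \geq \phi_1^\varepsilon$ on $M \times [0, T_0]$, and letting $\varepsilon \to 0$ gives $u \geq \phi_1$ there. Since $T_0 < T^*$ was arbitrary, $u(x,t) \geq \phi_1(t)$ for all $t$ on which $\phi_1$ is defined.

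**The main obstacle** I expect is not conceptual but bookkeeping: ensuring that the Lipschitz bound on $F$ is applied only where we genuinely know both $u$ and $\phi_1^\varepsilon$ lie in the compact interval $I$. This is automatic for $\phi_1^\varepsilon$ by construction, and for $u$ it holds on the compact slab $M \times [0, T_0]$ because $u$ is $C^2$ hence bounded there — but one must be slightly careful that the perturbed solutions $\phi_1^\varepsilon$ do not escape $I$, which is handled by shrinking $I$'s role to a neighborhood of the range of $\phi_1$ and taking $\varepsilon$ small. The only other point requiring care is the passage to the first-contact time $t_1$; the cleanest route, as indicated above, is to avoid it altogether by the $\eta(1+t)$ perturbation trick, so that one only ever argues at a strict interior minimum where the differential inequality is violated outright.
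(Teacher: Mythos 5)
Your strategy is the same as the paper's: linearize $F$ via its Lipschitz constant on a compact slab where $u$ and the ODE solution stay in a fixed interval, absorb the resulting zeroth-order term with an exponential-in-time factor, and derive a contradiction at a first-contact point after an $\ve$-perturbation that makes the relevant inequality strict. (Your extra device of shifting the ODE initial data down to $C_1-\ve$ is a harmless variant of the paper's $H=J+\ve t+\ve$.) However, two sign slips need fixing before the argument closes. First, with $v=e^{(L+1)t}w$ one gets $\ddt v\geq \De_{g(t)}v+(b+L+1)v$, not $(b-L-1)v$; to obtain the nonpositive coefficient you want, take $v=e^{-(L+1)t}w$. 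This is not cosmetic: in the rigorous $\eta(1+t)$ version you evaluate the zeroth-order term at a point where $v=-\eta(1+t)<0$, and you need the coefficient $c$ to satisfy $c\leq 0$ so that $cv\geq 0$ and the added $+\eta$ wins; with $c\geq 1>0$ the term $cv$ is negative of size comparable to $\eta(1+T_0)$ and the contradiction fails for large $T_0$. Second, at an interior spatial minimum one has $\De v\geq 0$, not $\De v\leq 0$, and the contradiction runs the other way from how you wrote it: the first-contact condition forces $\ddt H\leq 0$ there, while the differential inequality (with the corrected sign) forces $\ddt H\geq \eta>0$. With these corrections your proof coincides with the paper's.
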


\begin{proof}
We will only proof the first part of the theorem, as the proof of the second part is
analogous. For simplicity, let $\phi_1 = \phi$, $C_1 = C$.
We have
$$
\ddt (u - \phi) \geq \De(u - \phi) + F(u) - F(\phi),
$$
and $(u - \phi)|_{t = 0} \geq 0$.

\noindent
{\bf Claim:} $u - \phi \geq 0$ for all times where $\phi$ is defined.

In order to prove the claim, fix $\tau \in (0, T)$. Since $M$ is compact,
there exists $K_\tau$ such that
both $|u(x, t)|$ and $|\phi(t)|$ are bounded above by $K_\tau$,
for all $x \in M$, for all $t \in [0,\tau]$.

Let $L_\tau$ be the Lipschitz constant for $F$ in the $\tau$-slice. Then
$$
\ddt (u - \phi) \geq \De(u - \phi) \pm L_\tau (u - \phi).
$$

Define
$ J = e^{K_\tau t}(u - \phi)$. An easy computation gives
$$
\ddt J \geq \De J.
$$
Since $J(0) \geq 0$, the result follows from the maximum principle
for the heat equation, as follows.

We write $H = J + \ve t + \ve$ for some $\ve > 0$. The heat equation for
$J$ implies that
$$
\ddt H \geq \De H + \ve.
$$

From our old Calculus lessons, we remember that, if $(x_0, t_0)$ is the point where
the minimum of $H$ is attained, {\em i.e.},
$$
H(x_0, t_0) = \min\{H(x,t); x \in M, t \in [0, t_0]\},
$$
then
\beeq
\label{e.calculus}
\ddt H (x_0, t_0) \leq 0;
\hspace{.5cm}
\nabla H (x_0, t_0) = 0;
\hspace{.5cm}
\De H (x_0, t_0) \geq 0.
\eeeq

Now, suppose by contradiction  that our $H = J + \ve t + \ve$ is non-positive at some point $(x_1, t_1)$.
Since $H(.,0) > 0$, then there exist a first time $t_0$ and a point $x_0$ such that
$H(x_0, t_0) = 0$, which implies that $J(x_0, t_0) < 0$.

Recall that
$H(x_0, t_0) = \min \{H(x,t); x \in M, t \in [0, t_0]\}$. Therefore, (\ref{e.calculus}) yields
$$
0 \geq \ddt H (x_0, t_0) \geq \De H (x_0, t_0) + \ve \geq \ve > 0,
$$
which is a contradiction.

This completes the proof of the theorem.
\end{proof}

\subsection{Back to the evolution of $R$}

As a direct application of the maximum principle to  (\ref{e.Rsurface}),
we see that, if at $t=0$ we have $R \geq 0$, then this relation holds for
all times where the solution exists. Similarly, the condition $R \leq 0$ is also
preserved for the normalized Ricci flow on surfaces. An important observation is that
for higher dimensions, these quantities are not preserved under Ricci flow.

In fact, if $R(0) \leq 0$, the maximum principle tells us even more.
\begin{prop}
If  there exist $C, \ve > 0$ such that
$-C \leq R(0) \leq -\ve < 0$, then
$$
r e^{\ve t} \leq r - R \leq C e^{rt}
$$
so $R$ approaches $r$ exponentially.
\end{prop}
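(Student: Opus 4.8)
The plan is to apply the Scalar Maximum Principle twice to the evolution equation (\ref{e.Rsurface}) for $R$, once to obtain a lower bound and once for an upper bound, and then combine them. The reaction term here is $F(x) = x(x-r)$. Recall that $r < 0$ in this setting: since $-C \leq R(0) \leq -\ve < 0$ everywhere, averaging gives $r < 0$ as well. Note also that $R < 0$ is preserved (as already observed), so we only need to control how fast $R$ moves toward $r$.

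First I would establish the upper bound $r - R \leq C e^{rt}$. Starting from $R(0) \geq -C$, I want a clean ODE comparison. Set $\phi_1$ to solve $\dot\phi_1 = \phi_1(\phi_1 - r)$ with $\phi_1(0) = -C$; by the first part of the Maximum Principle, $R(x,t) \geq \phi_1(t)$. The cleanest route, however, is to work directly with $v = r - R$, whose evolution is $\ddt v = \De v + (r-v)(-v) \cdot(\text{sign adjustments})$ — concretely, from $\ddt R = \De R + R(R-r)$ one gets $\ddt (r-R) = \De(r-R) - (r-R)(r - (r-R)) = \De(r-R) + (r-R)\cdot R$. Wait — I should be careful: since $R < 0$, the term $(r-R)R$ has a useful sign. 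Writing $w = r - R \geq 0$, we have $\ddt w = \De w + wR \leq \De w + r w$ is false in general since $R \geq r$ is what we'd need; instead use $R \leq 0$, giving $\ddt w \le \De w$? No — $wR \leq 0$ since $w \geq 0, R \leq 0$, but that only yields $\ddt w \leq \De w$, hence $\max w$ is nonincreasing, which is too weak for exponential decay. The right move is to keep $R \ge r$ (true once we know $R<0$ and... actually $R$ can be below $r$). So the honest approach is the ODE comparison: $\phi_1(t)$ solving $\dot\phi = \phi(\phi - r)$ from $-C$. Since $-C \leq \phi \leq 0$ along the flow (the interval $[r,0]$ or $[-C,0]$ is where we track it), we have $\dot\phi = \phi(\phi-r) = \phi^2 - r\phi \geq -r\phi = |r|\cdot|\phi|$... this pushes $\phi$ toward $0$, i.e. $r - \phi \leq r - (\text{something going to }0)$, giving $r - R \leq r - \phi_1(t)$, and one checks $r - \phi_1(t) \leq (r+C)e^{rt} \leq C e^{rt}$ by comparing $\psi = r - \phi_1$ with $\dot\psi \leq r\psi$... this requires verifying $\dot\psi = -\dot\phi_1 = -\phi_1(\phi_1 - r) = \psi(\psi - (-r) + \cdots)$ — I would simply compute $\ddt(r - \phi_1)$ and bound it by $r(r-\phi_1)$ using $r - \ve \leq \phi_1 \le 0$, so $\phi_1 - r \geq -r > 0$ hmm. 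The symmetric argument for the lower bound $r e^{\ve t} \leq r - R$: start from $R(0) \leq -\ve$, so $r - R(0) \geq r + \ve$ — but wait, $r$ could be less than $-\ve$, making this negative; actually $r = $ average of $R(0) \leq -\ve$, so $r \leq -\ve < 0$ and $r - R(0) \geq r - (-\ve)$ could have either sign. I'd compare $R$ with $\phi_2$ solving $\dot\phi_2 = \phi_2(\phi_2 - r)$, $\phi_2(0) = -\ve$, getting $R \leq \phi_2$, hence $r - R \geq r - \phi_2$, and show $r - \phi_2(t) \geq (r+\ve)e^{\ve t}$ — honestly I suspect the intended bound is $(r+\ve)e^{\ve t} \le r - R$ and the statement's ``$re^{\ve t}$'' is a mild abuse/simplification, which I would flag or absorb into constants.

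The main obstacle I anticipate is getting the ODE comparisons to produce genuinely \emph{exponential} rates rather than just monotonicity, since the naive sign arguments on $w = r-R$ only give $\ddt w \leq \De w$. The resolution is to exploit that along the relevant solutions $R$ stays in a compact interval bounded away from $0$ on one side, so that in $F(x) = x(x-r) = x^2 - rx$ the linear term $-rx$ (with $-r = |r| > 0$) dominates and drives the exponential behavior; more precisely, for $x \in [-C, -\ve]$ we have $\dot x = x^2 - rx$ with $x^2 \geq 0$ and $-rx \leq -r\cdot(-\ve) \cdot(\cdots)$, so $|x|$ decays like $e^{rt}$ from above and is bounded below by $e^{\ve t}$ behavior near $-\ve$. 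I would carry this out by explicitly solving the Riccati ODE $\dot\phi = \phi(\phi-r)$ (it is separable: $\phi(t) = r\phi_0 / (\phi_0 + (r - \phi_0)e^{rt})$ or similar), reading off the precise decay of $r - \phi(t)$, and then invoking the Maximum Principle to transfer these bounds to $R(x,t)$. This gives $r - R$ squeezed between two exponentials, establishing that $R \to r$ exponentially and completing the proof of the proposition.
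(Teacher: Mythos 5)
Your plan is essentially the paper's own argument: the paper likewise reduces to the reaction ODE $\dot\phi = \phi(\phi-r)$ via the maximum principle (tracking the spatial maximum $\varrho$ and minimum $\rho$ of $R$, for which the Laplacian term can be dropped) and then extracts the exponential rates by linearizing the quadratic exactly as you intend --- using the preserved bound $\varrho \le -\ve$ to get $\varrho(\varrho-r) \le -\ve(\varrho-r)$ in one direction, and the identity $\rho(\rho-r) - r(\rho-r) = (\rho-r)^2 \ge 0$ in the other --- so your Riccati comparison is the same proof in slightly more explicit clothing. One correction to your flagged guess: the garbled lower bound should read $(r+\ve)e^{-\ve t} \le r - R$ with a \emph{decaying} exponential (not $e^{+\ve t}$), since $\frac{d}{dt}(r-\varrho) \ge \varrho(r-\varrho) \ge -\ve(r-\varrho)$ with $r - \varrho \le 0$ integrates to $r-\varrho \ge (r+\ve)e^{-\ve t}$, so that both sides of the squeeze tend to $0$ and $R \to r$ exponentially as claimed.
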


\begin{proof}
For a fixed $t$, let $\rho(t) = \min_{x \in M} R(x,t)$
and $\varrho(t) = \max_{x \in M} R(x,t)$. Then,
$\varrho$ satisfies
$$
\ddt \varrho \leq \varrho(\varrho - r) \leq -\ve(\varrho - r)
$$
and the minimum  $\rho$ satisfies
$$
\ddt \rho \geq \rho(\rho - r) \geq r(\rho - r),
$$
which clearly imply the claim.
\end{proof}

\begin{cor}
\label{c.chineg}
On a compact surface, if we start the Ricci flow with an initial
metric $g(0)$ whose scalar curvature is negative, then the solution exists
for all times, and converges exponentially to a metric of constant curvature.
\end{cor}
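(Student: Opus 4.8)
The plan is to squeeze out all the information contained in the preceding proposition (the exponential estimate $r e^{\ve t} \le r - R \le C e^{rt}$) together with the conformal nature of the flow in dimension two, and then bootstrap a bare $C^0$-estimate on the conformal factor up to smooth convergence.

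\medskip
\noindent\textbf{Step 1: long-time existence.} Since $M$ is compact and $R(0)<0$ everywhere, there are $C,\ve>0$ with $-C\le R(0)\le-\ve<0$, and averaging (via Gauss--Bonnet) gives $r<0$, i.e. $\chi(M)<0$. The previous proposition then yields, on the maximal existence interval, $r e^{\ve t}\le r-R\le C e^{rt}$; combining the upper bound with the preserved inequality $R\le 0$ gives the two-sided bound $r - C e^{rt}\le R(t)\le 0$, so $|R|$ stays uniformly bounded. In dimension two the full curvature tensor is controlled by $R$, so by the standard long-time existence criterion (or directly by the all-time existence theorem stated above) the solution of the normalized flow (\ref{e.NRFsurface}) exists for all $t\in[0,\infty)$.

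\medskip
\noindent\textbf{Step 2: $C^0$ convergence of the conformal factor.} Write $g(t)=e^{2u(t)}g(0)$ with $u(0)=0$. From $\ddt g=(r-R)g$ one reads off $\ddt u=\tfrac12(r-R)$, hence $|\ddt u(x,t)|\le\tfrac{C}{2}e^{rt}$. Since $r<0$ this is integrable on $[0,\infty)$, so $u(\cdot,t)$ converges uniformly to a function $u_\infty$ with $\|u(\cdot,t)-u_\infty\|_{C^0}\le\tfrac{C}{2|r|}e^{rt}$. In particular $u$ is uniformly bounded, so all the $g(t)$ are uniformly equivalent to $g(0)$ and the candidate limit $g_\infty=e^{2u_\infty}g(0)$ is a genuine non-degenerate metric, provided $u_\infty$ is smooth.

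\medskip
\noindent\textbf{Step 3: smooth convergence and identification of the limit.} Differentiate (\ref{e.Rsurface}) repeatedly and use the uniform bound on $R$ to obtain, for $t\ge 1$ and each $k$, uniform bounds on $\|\gr^k R\|$ (Bernstein/Shi-type estimates); together with $R\to r$ these give $\|\gr^k(R-r)\|_{C^0}\to 0$ exponentially. Feeding this back through $\ddt u=\tfrac12(r-R)$, and using elliptic regularity for the conformal relation $R=e^{-2u}(R_0-2\De_{g_0}u)$ from the conformal-change lemma, shows $u(\cdot,t)\to u_\infty$ in every $C^k$ norm, exponentially fast. Passing to the limit in $R=e^{-2u}(R_0-2\De_{g_0}u)$ forces the scalar curvature of $g_\infty$ to be identically $r$, i.e. $g_\infty$ has constant Gauss curvature, as claimed. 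The main obstacle is exactly this last step: turning the easy $C^0$ (and exponential) convergence of the conformal factor into genuine smooth convergence of the metrics requires the uniform higher-derivative curvature estimates, without which one only knows the metrics stay comparable, not that the flow actually settles down at every order.
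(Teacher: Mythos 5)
Your proposal is correct and follows essentially the same route as the paper, which presents this corollary as a direct consequence of the preceding proposition: the maximum-principle estimate gives exponential decay of $r-R$, and integrating $\ddt u=\tfrac12(r-R)$ yields exponential convergence of the conformal factor. The paper leaves the bootstrap to smooth convergence implicit; your Step 3 simply supplies those standard details (derivative estimates plus elliptic regularity for the conformal relation), so nothing is missing or different in substance.
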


This is almost the result we seek in the case of a surface with
$\chi(M) < 0$. The only remaining piece is to show that if $r < 0$, then
the scalar curvature $R(t)$
will eventually become negative at some time. If that were true, we just
restart the flow at that time, and the metric will flow exponentially to
the metric with curvature equal to $r = \frac{4\pi \chi(M)}{A}$.
We will be back to that soon.

Sadly, the situation if far more complicated
for $\chi(M) >0$. But we still have uniform lower bounds that are also consequence
of the maximum principle.

\begin{prop}
Let $\rho(t) = \inf_{x \in M} R(x,t)$. Then
\beit
\item
If $r > 0$, then
$$
R - r \geq \frac{r}{1 - ( 1 - \frac{r}{\rho(0)} )e^{rt}} - r \geq (\rho(0)- r)e^{rt}.
$$
\item
If $r = 0$, then
$$
R-r \geq \frac{\rho(0)}{1 - \rho(0)t}.
$$
\item
If $r < 0$ and $\rho(0) < 0$, then
$$
R \geq \frac{r}{1 - ( 1 - \frac{r}{\rho(0)} )e^{rt}} \geq \rho(0) e^{-rt}.
$$
\eeit
\end{prop}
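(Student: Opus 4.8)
The plan is to feed the evolution equation (\ref{e.Rsurface}) into the Scalar Maximum Principle, using the minimum (supersolution) half of that theorem. Since $R$ itself satisfies $\ddt R = \De R + R(R-r)$, it is in particular a $C^2$ supersolution of (\ref{e.scalarmaxprinc}) with $F(s) = s(s-r) = s^2 - rs$, and $R(x,0) \geq \rho(0)$ for all $x$. One only needs to observe that the proof of the Scalar Maximum Principle never uses the sign of the initial bound $C_1$, so it applies verbatim with $C_1 = \rho(0)$, whatever its sign. Hence $R(x,t) \geq \phi(t)$ wherever the solution $\phi$ of the ODE
$$
\ddt \phi = \phi(\phi - r), \qquad \phi(0) = \rho(0)
$$
exists, and everything reduces to solving this scalar ODE and reading off the two stated inequalities in each case.

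Next I would solve the ODE explicitly: it is of Bernoulli type, so away from $\phi = 0$ put $u = 1/\phi$, giving the linear equation $\ddt u = -1 + ru$. Integrating with $u(0) = 1/\rho(0)$ yields, for $r \neq 0$,
$$
u(t) = \frac{1}{r}\Bigl(1 - \bigl(1 - \tfrac{r}{\rho(0)}\bigr)e^{rt}\Bigr), \qquad\text{i.e.}\qquad \phi(t) = \frac{r}{1 - \bigl(1 - \tfrac{r}{\rho(0)}\bigr)e^{rt}},
$$
and for $r = 0$ one gets $\ddt(1/\phi) = -1$, hence $\phi(t) = \rho(0)/(1 - \rho(0)t)$. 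Combined with $R \geq \phi$ (and $R - r \geq \phi - r$ when $r \geq 0$) this is precisely the first inequality listed in each of the three cases. I would also note that in the regime $r<0$, $\rho(0)<0$ the denominator stays positive for all $t \geq 0$ (checking the sub-cases $\rho(0)\le r$ and $r<\rho(0)<0$ separately), so the bound is valid for all time there.

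For the cruder exponential tails I would avoid the closed form and instead use elementary differential inequalities on $\phi$. For $r > 0$, set $\chi = \phi - r$; then $\ddt \chi = \chi(\chi + r) \geq r\chi$, so $\ddt(e^{-rt}\chi) = e^{-rt}\chi^2 \geq 0$, whence $\chi(t) \geq \chi(0)e^{rt} = (\rho(0)-r)e^{rt}$, the second inequality for $r>0$. For $r < 0$, use $\ddt \phi = \phi^2 - r\phi \geq -r\phi$, so $\ddt(e^{rt}\phi) = e^{rt}\phi^2 \geq 0$ and $\phi(t) \geq \rho(0)e^{-rt}$, the second inequality for $r<0$. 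No secondary bound is asserted when $r = 0$, so this finishes all three items.

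The computations here are all routine; the only genuine points of care are (i) invoking the maximum principle with a possibly negative initial lower bound, which just requires remarking that the given proof is insensitive to the sign of $C_1$, and (ii) verifying the interval of existence of $\phi$ so that the conclusion "$R \geq \phi$ for all $t$ where $\phi$ exists" is non-vacuous in the cases of interest — in particular the positivity of the denominator in the formula for $\phi$ when $r<0$ and $\rho(0)<0$.
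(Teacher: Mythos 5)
Your proposal is correct and follows exactly the route the paper indicates: compare $R$ with the solution of the associated ODE via the Scalar Maximum Principle, solve the ODE explicitly, and extract the cruder exponential tails by differential inequalities. The paper only sketches this in one sentence, so your write-up simply supplies the details (including the correct sign $\ddt\phi=\phi(\phi-r)$, where the paper's displayed ODE has a typo) along the same lines.
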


This proposition follows from the maximum principle, that allows us to compare
the solution of (\ref{e.Rsurface}) with the solution to the associated scalar ODE:
$$
\ddt R = R(r - R).
$$

It seems that we juiced out all we could from the maximum principle for this equation. In order
to obtain nice upper bounds for $R$ evolving under Ricci flow, we need to use a smarter tool.

\section{Ricci Solitons on surfaces}

In this section, a new tool is developed in order to produce the expected upper bounds.

\begin{defin}
Let $g(t)$ be a solution of the normalized Ricci flow (\ref{e.NRFsurface}) on a surface $M$.
We say that $g(t)$ is a {\bf self-similar solution} if there exists a $1$-parameter family
$\varphi (t)$ of conformal diffeomorphisms such that
\beeq
\label{e.self-similar}
g(t) = [\varphi(t)]^* g(0).
\eeeq
\end{defin}

Equation (\ref{e.self-similar}) implies that
$$
\ddt g(t) = {\cal L}_X g,
$$
where $X$ is the vector field generated by $\vp (t)$ and ${\cal L}
_X$ denotes the Lie derivative of the metric
in the direction of $X$. More is true: if $g(t)$ is a solution to the \NRF, then
\beeq
\label{e.solitonSurface}
(R - r)g_{ij} = \nabla_i X_j + \nabla_j X_i.
\eeeq
We shall refer to (\ref{e.solitonSurface}) as the {\bf Ricci soliton equation}.

Also, if there exists a function $f$ such that $\nabla f = X$, then we have a so-called
{\bf gradient Ricci soliton}, which will satisfy the equation
\beeq
\label{e.gradientsoliton}
(R - r)g_{ij} = \nabla_i \nabla_j f.
\eeeq
The function $f$ will be referred to as the {\bf Ricci potential}.

Tracing the equation above, we see that the potential $f$ must satisfy
$$
\De f = R - r.
$$
This equation is solvable on a compact manifold, since
$$
\int_M (R - r) d\Vol = 0
.$$

A gradient Ricci soliton is a very special solution of the \NRF, and as so,
we expect that some quantities related to it will be preserved along the flow.

\begin{lemma}
For a gradient Ricci soliton, the expression
$$
R + |\nabla f|^2 + r f
$$
is only a function of time.
\end{lemma}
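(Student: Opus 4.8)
The plan is to differentiate the quantity $Q = R + |\nabla f|^2 + rf$ in time and show that $\nabla Q \equiv 0$, so that $Q$ is spatially constant; since the manifold is connected this forces $Q$ to depend on $t$ alone. I would work throughout on a gradient Ricci soliton, so I may use both the soliton equation $(R-r)g_{ij} = \nabla_i\nabla_j f$ (equivalently its trace $\Delta f = R-r$) and the evolution equation $\ddt R = \Delta R + R(R-r)$ from the earlier lemma. I also need the evolution of $f$: differentiating $\Delta f = R - r$ in time, and tracking how $\Delta$ itself changes under $\ddt g = (r-R)g$, one gets an equation of the form $\Delta(\ddt f) = \ddt R + (\text{lower order})$; combined with $\ddt R = \Delta R + R(R-r)$ this should yield $\ddt f = R$ up to a harmonic (hence constant) function, which on a compact surface one can normalize away. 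Having $\ddt f = R$, together with $\Delta f = R - r$, is the key pair of identities.

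The main computation is then to take the spatial gradient $\nabla_i Q = \nabla_i R + \nabla_i(|\nabla f|^2) + r\nabla_i f$ and show it vanishes. For the middle term, $\nabla_i|\nabla f|^2 = 2\nabla^j f\,\nabla_i\nabla_j f = 2\nabla^j f\,(R-r)g_{ij} = 2(R-r)\nabla_i f$ by the soliton equation — this is where the soliton hypothesis does the real work. For the first term I would use the contracted second Bianchi identity in dimension two, $\nabla_i R = 2\nabla^j R_{ij}$, and the fact that in 2D $R_{ij} = \tfrac12 R g_{ij}$, to relate $\nabla_i R$ back to $\nabla_i f$ via the soliton equation; commuting derivatives $\nabla^j\nabla_i\nabla_j f = \nabla_i\Delta f + R_{ij}\nabla^j f$ (Ricci identity) and using $\Delta f = R-r$ and $R_{ij}\nabla^j f = \tfrac12 R\nabla_i f$ should produce $\nabla_i R = -2(R-r)\nabla_i f - r\nabla_i f + (\text{something that cancels})$. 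Assembling the three pieces, the $(R-r)\nabla_i f$ contributions cancel and the leftover $r\nabla_i f$ terms cancel against the third term $r\nabla_i f$, giving $\nabla_i Q = 0$.

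Alternatively — and this may be cleaner — one can verify the claim by a direct time derivative: compute $\ddt Q$ using $\ddt R = \Delta R + R(R-r)$, $\ddt f = R$, and $\ddt(|\nabla f|^2) = \ddt(g^{ij})\nabla_i f\nabla_j f + 2g^{ij}\nabla_i f\,\nabla_j(\ddt f)$, where $\ddt g^{ij} = (R-r)g^{ij}$, and show the result equals $\Delta Q$ (or even just that it is spatially constant). If $Q$ satisfies a heat equation $\ddt Q = \Delta Q + cQ + (\text{const})$ and one already knows $\nabla Q|_{t=0}=0$ on the soliton, uniqueness pins down $Q$ as a function of $t$; but the spatial-gradient argument above avoids needing the initial condition.

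The step I expect to be the main obstacle is establishing $\ddt f = R$ cleanly: the Ricci potential $f$ is only determined up to an additive function of $t$, and one must make a consistent choice (e.g. normalizing $\int_M f\,d\Vol$ suitably, or fixing $f$ by $\Delta f = R-r$ and tracking the time-dependent constant) so that the potential equation and the evolution equation are simultaneously compatible. Once that bookkeeping is done, the remaining manipulations are the routine tensor calculus sketched above — commuting covariant derivatives, invoking the 2D identity $R_{ij}=\tfrac12 R g_{ij}$, and cancelling terms.
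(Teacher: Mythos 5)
Your spatial-gradient argument is essentially the paper's proof: the paper packages the very same computation as the divergence identity $\nabla^j M_{ij} = \tfrac12(R\nabla_i f + \nabla_i R)$ for the trace-free Hessian $M$ of $f$, which is exactly your Ricci-identity commutation combined with the two-dimensional relation $R_{ij} = \tfrac12 R g_{ij}$ and $\De f = R - r$, yielding $\nabla_i R + R\nabla_i f = 0$ on a gradient soliton and hence $\nabla_i\bigl(R + |\nabla f|^2 + rf\bigr) = 0$. The preliminary discussion of $\partial f/\partial t = R$ and of a heat equation for $Q$ is unnecessary for this statement --- only the spatial identity is needed, as you yourself observe at the end.
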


\begin{proof}
Let $M  = \nabla \nabla f - 1/2 \De f g$ be the trace-free part of the
hessian of $f$. Clearly, $M = 0$ is a necessary and sufficient condition
for a gradient Ricci soliton.

Computing the divergence of $M$, we have
$$
(\text{div} M)_i = \nabla^j M_{ij} = \frac{1}{2}(R \nabla_i f + \nabla_i R).
$$
Therefore, for a gradient Ricci soliton,
\begin{multline}
0 = \nabla_i R +  R \nabla_i f =
\nabla_i R +  (R-r) \nabla_i f + r\nabla_i f = \\
= \nabla_i ( R + |\nabla f|^2 + rf),
\end{multline}
which completes the proof.

\end{proof}

In fact, we can choose our potential to satisfy an even nicer equation.
\begin{lemma}
Let $f_0$ be a potential. Then, there exists $c = c(t)$
(that only depends on time) such that the new potential
$f = f_0 + c$
satisfies
\beeq
\label{e.potential}
\ddt f = \De f  + r f.
\eeeq
\end{lemma}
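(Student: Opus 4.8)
The plan is to start from an arbitrary Ricci potential $f_0$, which by definition satisfies $\De f_0 = R - r$, and to look for a time-dependent correction $c(t)$ so that $f = f_0 + c$ obeys the heat-type equation $\ddt f = \De f + r f$. Since $c$ depends only on time, $\De f = \De f_0 = R - r$, so the desired equation becomes $\ddt f_0 + c'(t) = (R - r) + r(f_0 + c)$, i.e. $c'(t) - r\,c(t) = (R - r) + r f_0 - \ddt f_0$. The first thing to check is that the right-hand side of this last relation is in fact independent of the point on $M$; once that is established, we have an honest linear ODE in $t$ for $c$, which is solvable (with, say, $c(0)$ chosen freely), and we are done.

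So the heart of the argument is to show that $\Phi := (R - r) + r f_0 - \ddt f_0$ is spatially constant. I would compute $\nabla_i \Phi$. Using $\ddt f_0 = \ddt(\De^{-1}(R-r))$ one needs the evolution of $f_0$; alternatively, and more cleanly, differentiate the defining relation $\De f_0 = R - r$ in time. We have $\ddt(\De f_0) = \ddt R$, and by the earlier Lemma $\ddt R = \De R + R(R - r)$. On the other hand $\ddt(\De f_0) = (\ddt \De) f_0 + \De(\ddt f_0)$, and for a conformal flow $\ddt g = (r-R)g$ on a surface one has the standard formula $\ddt \De = -(r-R)\De = (R-r)\De$ (since $\De_{e^{2u}g} = e^{-2u}\De_g$ in dimension two). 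Hence $\De(\ddt f_0) = \De R + R(R-r) - (R-r)\De f_0 = \De R + R(R-r) - (R-r)^2 = \De R + r(R-r) = \De(R + r f_0)$, using $\De f_0 = R-r$ once more. Therefore $\De(\ddt f_0 - R - r f_0) = 0$ on the closed surface $M$, so $\ddt f_0 - R - r f_0$ is a function of $t$ alone; equivalently $\Phi$ is a function of $t$ alone, which is exactly what was needed.

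Having reduced matters to the ODE $c'(t) = r\,c(t) + \Phi(t)$ with $\Phi$ now known to be a function of $t$, I would simply invoke existence of solutions to linear first-order ODEs (e.g. by the integrating factor $e^{-rt}$) to produce $c(t)$, and then $f = f_0 + c$ satisfies (\ref{e.potential}) by construction. The main obstacle is the spatial-constancy step: it hinges on getting the evolution of the Laplacian under a conformal flow right and on repeatedly using the compatibility relation $\De f_0 = R - r$ to collapse the curvature terms — a short computation, but the one place where an error would sink the argument. Everything after that is routine ODE theory.
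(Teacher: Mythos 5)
Your proof is correct and follows essentially the same route as the paper's (very terse) argument: you use the evolution $\ddt \De = (R-r)\De$ of the Laplacian under the conformal flow together with $\De f_0 = R - r$ and the evolution of $R$ to show that $\ddt f_0 - \De f_0 - r f_0$ is harmonic, hence spatially constant on the closed surface, and then absorb it into $c(t)$ by solving a linear ODE. The computation collapsing the curvature terms to $\De R + r(R-r)$ is right, and the ODE step is routine, so nothing is missing.
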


The proof of this lemma follows from noticing that
$\ddt \De = (R - r) \De$, and recalling that the only
harmonic functions on a compact surface are constants.

Applying the maximum principle to (\ref{e.potential}),
we see that there exists a constant $K$ such that $|f| \leq C^{rt}$.
We still need to work a bit further to extract an upper bound for $R$.
For that, we define
\beeq
h = \De f + |\nabla f|^2 = (R - r)f + |\nabla f|^2.
\eeeq

\begin{lemma}
The evolution for $h$ is given by
$$
\ddt h = \De h - 2 |M|^2 + rh,
 $$
where $M$ is the trace-free part of the Hessian of $f$.

Therefore, if $h \leq C$ at time zero, then $h(t) \leq C^{rt}$ for all $t$.
\end{lemma}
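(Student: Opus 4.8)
The plan is to differentiate $h=\De f+|\nabla f|^2$ directly in time, using the three ingredients already available: the operator identity $\ddt(\De u)=(R-r)\De u+\De(\ddt u)$ (which follows from $\ddt\De=(R-r)\De$ on time-independent functions together with the product rule), the evolution $\ddt f=\De f+rf$ of the normalized potential (\ref{e.potential}), and $\ddt g^{ij}=(R-r)g^{ij}$ (from $\ddt g_{ij}=(r-R)g_{ij}$). First I would compute $\ddt(\De f)$: combining the first two ingredients gives $\ddt(\De f)=(R-r)\De f+\De(\De f+rf)=\De\De f+R\De f=\De\De f+(\De f)^2+r\De f$; equivalently, since $\De f=R-r$ and $r$ is constant, this is just $\ddt R=\De R+R(R-r)$ from (\ref{e.Rsurface}).

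Next I would compute $\ddt|\nabla f|^2=\ddt\bigl(g^{ij}\nabla_i f\,\nabla_j f\bigr)$. Using $\ddt g^{ij}=(R-r)g^{ij}$ and $\nabla(\ddt f)=\nabla\De f+r\nabla f$, this equals $(R+r)|\nabla f|^2+2\langle\nabla\De f,\nabla f\rangle$. To dispose of the cross term I would invoke the \emph{Bochner formula} $\De|\nabla f|^2=2|\nabla^2 f|^2+2\langle\nabla\De f,\nabla f\rangle+2\Ric(\nabla f,\nabla f)$, together with the two-dimensional facts $\Ric=\tfrac{R}{2}g$ and $|\nabla^2 f|^2=|M|^2+\tfrac12(\De f)^2$ — the trace part of the Hessian in dimension $2$ is $\tfrac12(\De f)\,g$, whose squared norm is $\tfrac12(\De f)^2$. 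This gives $2\langle\nabla\De f,\nabla f\rangle=\De|\nabla f|^2-2|M|^2-(\De f)^2-R|\nabla f|^2$, and substituting back produces $\ddt|\nabla f|^2=\De|\nabla f|^2-2|M|^2-(\De f)^2+r|\nabla f|^2$.

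Adding the two evolutions, the $\pm(\De f)^2$ terms cancel and the rest assembles exactly into $\ddt h=\De h-2|M|^2+rh$, which is the claimed identity. For the concluding estimate I would observe that $-2|M|^2\le 0$, so $h$ is a subsolution of $\ddt v=\De v+rv$; applying the Scalar Maximum Principle with $F(v)=rv$ and $\phi_2$ the solution of $\ddt\phi_2=r\phi_2$, $\phi_2(0)=C$, yields $h(x,t)\le\phi_2(t)=C e^{rt}$ for all $t$ (the displayed $C^{rt}$ being a typo for $Ce^{rt}$).

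The step I expect to require the most care is the bookkeeping of the metric's time dependence in \emph{every} term (both $\De$ and $g^{ij}$ are moving), and the correct two-dimensional trace-free decomposition of the Hessian: it is precisely the dimensional constant in $|\nabla^2 f|^2=|M|^2+\tfrac12(\De f)^2$ that makes the $(\De f)^2$ terms cancel between $\ddt(\De f)$ and $\ddt|\nabla f|^2$. Everything else is routine manipulation with the Bochner identity and the already-established evolution equations.
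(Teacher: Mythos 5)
Your proof is correct and follows essentially the same route as the paper's: compute $\ddt(\De f)$ from the evolution of $R$ and $\ddt|\nabla f|^2$ via the Bochner identity, then add; the paper merely states the two resulting evolution equations without showing the Bochner step or the two-dimensional decomposition $|\nabla^2 f|^2=|M|^2+\tfrac12(\De f)^2$ that you correctly supply to make the $(\De f)^2$ terms cancel. Your reading of the conclusion as $h\le Ce^{rt}$ via the scalar maximum principle is also the intended one.
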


\begin{proof}
From the evolution of $R$, we have
$$
\ddt R = \De (R-r) + R(r-R) = \De(R - r) + (\De f)^2 + r(R - r).
$$
Also,
$$
\ddt |\gr f|^2 = \ddt (g^{ij} \gr_i \gr_j f) = \De (|\gr f|^2) - 2|\gr \gr f|^2 + r |\gr f|^2,
$$
which implies the claim.
\end{proof}

Observing that $R = h + r - |\nabla f|^2$, we obtain our desired upper bound:

\bethm
\label{t.surfacebound}
{\bf (Hamilton)}
For any initial metric on a compact surface, there exists a constant
$C$ such that
$$
-C \leq R \leq Ce^{rt} + r.
$$

Therefore, the Ricci flow equation has a solution defined for all times.
\eethm

For the $r < 0$ case, we complete the proof of the main result of this chapter,
as the theorem above implies that eventually $R$ will become negative. Combining this
with Corollary \ref{c.chineg}, we obtain
\begin{cor}
If $r < 0$ (a purely topological condition), then for any choice of initial metric,
the solution to the \RF exists for all times, and converges to a metric with constant
negative curvature.
\end{cor}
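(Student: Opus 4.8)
The plan is to combine the global existence and the upper curvature bound of Theorem~\ref{t.surfacebound} with the convergence statement of Corollary~\ref{c.chineg} by means of a restarting argument. First I would invoke Theorem~\ref{t.surfacebound} for the given initial metric: the \NRF solution $g(t)$ of (\ref{e.NRFsurface}) exists for all $t \geq 0$ and satisfies $R(x,t) \leq Ce^{rt} + r$ for some constant $C$. Since $r < 0$, the term $Ce^{rt}$ decays to $0$ as $t \to \infty$, so $\limsup_{t \to \infty}\, \max_{x \in M} R(x,t) \leq r < 0$. In particular there is a time $t_0$ with $\max_{x \in M} R(x,t_0) < 0$, i.e.\ $g(t_0)$ is a metric of strictly negative scalar curvature.

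Next I would restart the flow at $t_0$, using $g(t_0)$ as the new initial datum. By short-time uniqueness of the \NRF this solution coincides with $g(t)$ for $t \geq t_0$, and by Corollary~\ref{c.chineg} it exists for all times and converges exponentially to a metric $g_\infty$ of constant curvature. It then remains to identify this constant. Here I would use that the \NRF preserves the area $A$ of the surface (which is precisely why the factor $r$ was inserted into (\ref{e.NRFsurface})) together with Gauss-Bonnet: the average scalar curvature equals $r = 4\pi\chi(M)/A$ at every time and hence in the limit, so the constant curvature of $g_\infty$ must be exactly $r < 0$. Finally, the hypothesis is genuinely topological because the sign of $r$ is the sign of $\chi(M)$, independent of the metric.

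I do not expect any serious obstacle: the argument is essentially bookkeeping once the hard analytic input of the previous section — the bound $R \leq Ce^{rt} + r$ obtained via the Ricci potential $f$ and the auxiliary quantity $h = \De f + |\nabla f|^2$ — is in hand. The only two points that deserve a moment's care are the restarting step, where one checks that the flow with initial metric $g(t_0)$ literally equals the tail of the original flow rather than merely resembling it, and the identification of the limiting curvature as $r$ rather than some unspecified negative number, which is where the constancy of $r$ along the flow is used.
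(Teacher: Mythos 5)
Your proposal is correct and follows essentially the same route as the paper: use Theorem~\ref{t.surfacebound} to get global existence and the bound $R \leq Ce^{rt}+r$, which forces $R<0$ at some finite time since $r<0$, then restart and invoke Corollary~\ref{c.chineg}. The extra care you take in identifying the limiting constant as $r$ via area preservation and Gauss--Bonnet is consistent with the earlier proposition showing $R \to r$ exponentially once $R<0$.
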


\section{The case $r = 0$}

From Theorem \ref{t.surfacebound}, we know that the solution exists for all times,
and that $R$ is bounded from above and below. It remains to show that
the solution actually converges to a  flat metric.

Recall that the \RF on surfaces evolves inside a conformal class. So, consider
$g = e^{-2u}h$, two conformal metrics. The relation between the scalar curvatures
is given by
$$
R_g = e^{2u} (R_h - 2\De_h u).
$$

Hence, up to replacing the starting metric by a conformal factor $u$ such that
$\De u = R$\footnote{This equation is solvable, as the average scalar curvature is zero},
we may assume that $h$ is the flat metric, and study the evolution of the conformal factor $u$.

Note that
$$
-2\ddt u e^{-2u}h = \ddt g = R_g g = R_g e^{-2u}h = -2 \De_h u h,
$$
which shows that the conformal factor $u$ evolves by
\beeq
\label{e.conformalfactor}
\ddt u = e^{2u}\De_h u.
\eeeq

The maximum principle allows us to conclude the following.
\begin{cor}
There exists a constant $C \geq 0$ such that
$$-C \leq u(t)\leq C
$$
for all time $t$.
\end{cor}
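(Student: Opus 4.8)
The plan is to establish the two-sided bound on the conformal factor $u(t)$ by applying the scalar maximum principle to equation (\ref{e.conformalfactor}). First I would observe that this PDE, although it has the extra conformal factor $e^{2u}$ in front of the Laplacian, is still of the form to which the maximum-principle comparison with an ODE applies: the reaction term is identically zero. So the associated ODE is $\ddt \phi = 0$, whose solutions are constants. Setting $C_1 = \min_{x \in M} u(x,0)$ and $C_2 = \max_{x \in M} u(x,0)$, the constant functions $\phi_1 \equiv C_1$ and $\phi_2 \equiv C_2$ are a subsolution and a supersolution (in fact solutions) to the equation, and $u(\cdot,0)$ lies between them.

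The key point to check is that the positive factor $e^{2u}$ multiplying $\De_h u$ does not spoil the maximum principle argument. The cleanest way to handle this is to note that at an interior spatial minimum $x_0$ of $u(\cdot,t)$ one has $\De_h u(x_0,t) \geq 0$, hence $\ddt u(x_0,t) = e^{2u(x_0,t)}\De_h u(x_0,t) \geq 0$, so the spatial minimum $m(t) = \min_x u(x,t)$ is non-decreasing; symmetrically, at a spatial maximum $\De_h u \leq 0$, so $m'(t) \le 0$ forces the spatial maximum to be non-increasing. This gives $m(0) \leq u(x,t) \leq M(0)$ for all $x$ and all $t$ in the interval of existence, which is exactly the desired estimate with $C = \max\{|m(0)|, |M(0)|\}$. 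To make the differential-inequality argument for $m(t)$ and $M(t)$ rigorous one runs the standard perturbation trick already used in the proof of the Scalar Maximum Principle above, comparing $u$ with $\phi_1 - \ve - \ve t$ (respectively $\phi_2 + \ve + \ve t$) and using that $e^{2u}$ is bounded above and below on $M \times [0,\tau]$ by compactness, so the first-contact point argument goes through verbatim.

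The main obstacle is purely a matter of justifying that the maximum principle applies despite the nonlinear coefficient $e^{2u}$: one must confirm that on each time slice $[0,\tau]$ the coefficient stays within fixed positive bounds so that the parabolic comparison is uniformly parabolic, and that $u$ is $C^2$ in space and $C^1$ in time so the first-contact computation (\ref{e.calculus}) is legitimate. Both follow from short-time parabolic regularity for (\ref{e.conformalfactor}) together with compactness of $M$. Once that is in place, the corollary follows immediately, since the bounds $m(0)$ and $M(0)$ on the initial conformal factor are finite by compactness and continuity of $u(\cdot,0)$.
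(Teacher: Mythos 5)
Your proof is correct and follows the same route the paper intends: the corollary is stated as a direct consequence of the maximum principle applied to (\ref{e.conformalfactor}), whose reaction term vanishes, so the spatial maximum of $u$ is non-increasing and the spatial minimum non-decreasing. The extra care you take with the coefficient $e^{2u}$ is sound but not strictly necessary, since in dimension two $e^{2u}\De_h = \De_{g(t)}$ for $g(t)=e^{-2u}h$, so the Scalar Maximum Principle as stated (with $F\equiv 0$) applies verbatim.
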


This corollary has important consequences: all the metrics $g(t)$ are uniformly
equivalent, as well as the diameter and Sobolev constant.

From now on, our goal is to prove exponential decay for the scalar curvature $R(t)$.
The strategy, following \cite{H1}, is estimate the $L^2$-norms of $R, \gr R$ and $\gr^2 R$,
and use the Sobolev embedding theorem to obtain the expected decay.

In what follows, we will drop the subscript $h$ on the expressions for the geometric objects
related to the (flat) metric $h$.
We have
\beeq
\label{e.above1}
\frac{d}{dt} \int |\gr u|^2 d\mu = 2 \int \ip{\gr u, \gr(e^{2u} \De u)} d\mu = -2 \int e^{2u}\De u d\mu,
\eeeq
which yields, together with the relation
$$
\int(\De u )^2 d\mu \geq c \int |\gr u|^2 d\mu,
$$
$$
\frac{d}{dt} \int |\gr u|^2 d\mu + c \int |\gr u|^2 d\mu \leq 0.
$$

Thinking of this integral as a function $A(t)$, the relation above simply tells us that
$\frac{d}{dt} A \leq -c A$. This implies the exponential decay on the $L^2$-norm of
$\gr u$: for some $C > 0$, we have
$$
\int(\gr u )^2 d\mu \geq C e^{-ct}.
$$

Now, integrating (\ref{e.above1}) with respect to $t$, we get
$$
\int_T^\infty \int e^{2u}(\De u)^2 d \mu \leq \int(\gr u )^2 d\mu,
$$
and since $u$ is uniformly bounded,
$$
\int_T^\infty \int R^2 d\mu \leq \int_T^\infty \int e^{2u}(\De u)^2 d \mu \leq Ce^{-ct}.
$$

This relation tells us that there exist a point $\xi $ in every interval $[T, T+1]$ such that
$$
\int R^2 d\mu \leq Ce^{-c\xi}.
$$
Also, from the evolution equation of $R$,
$$
\ddt R^2 = 2R(R^2 + \De R ),
$$
and hence
\beeq
\label{e.above2}
\frac{d}{dt} \int R^2 d\mu  \leq \int R^3 d\mu \leq C \int R^2 d\mu,
\eeeq
where the last inequality comes from the fact that $R$ is bounded and the surface is compact.

Let $q(t) = \int R^2 d\mu $. We learned so far that in every interval $[T, T+1]$,
$q(\xi) \leq Ce^{-c\xi}$, and that $\frac{d}{dt}q \leq C q$. Therefore, for $t \in [T, T+1]$
\beeq
q(t) = q(\xi) + \int^t_\xi \frac{d}{dt}q
\leq Ce^{-c\xi} + C\int_T^\infty q \leq Ce^{-ct},
\eeeq
which shows the exponential decay
$$
\int R^2 d\mu \leq C e^{-ct}.
$$

Now, we proceed with the estimates for the gradient of $R$. Integrating
(\ref{e.above2}) with respect to time, and noting that any $L^p$-norm of
$R$ goes to zero exponentially, we get
$$
\int_T^\infty \int |\gr R|^2 d\mu \leq Ce^{-cT},
$$
which again shows that
$$
\int |\gr R|^2 d\mu \leq Ce^{-c\xi}
$$
for some $\xi$ in every interval $[T, T+1]$.

Integration by parts yields
$$
\frac{d}{dt} \int |\gr R|^2 d\mu + 2 \int (\De R)^2 d\mu \leq -2 \int R^2\De R d\mu ,
$$
and Cauchy-Schwartz gives that the right-hand side is bounded by
$$
-2 \int R^2\De R d\mu \leq \int R^4 d\mu + \int(\De R)^2d\mu.
$$
Therefore,
$$
\frac{d}{dt} \int |\gr R|^2 d\mu +  \int (\De R)^2 d\mu \leq \int R^4 d\mu
$$
Since the right-hand side of the equation above is exponentially small, this gives us two pieces
of information: firstly,
$$
\frac{d}{dt} \int |\gr R|^2 d\mu \leq Ce^{-ct},
$$
and secondly,
$$
\int_T^\infty \int (\De R)^2 d\mu \leq Ce^{-cT}.
$$

This last inequality tells us that we can play the same game as before, redefining the
quantity $q$ to be $q(t) =  \int (\De R)^2 d\mu$, and obtaining the desired exponential decay
on the $L^2$-norm of the Laplacian of $R$.

The bound on $\gr^2 R$ follows from the previous bounds, and the {\bf Bochner identity}
for the case of  a flat metric
$$
\De(\frac{1}{2} |\gr^2 R|^2) = |\gr^2 R|^2 + \ip{\gr R, \gr(\De R)}.
$$
imply that\footnote{Note that also an integration by parts is needed.}
$$
\int |\gr^2 R|^2 d\mu = \int(\De R )^2 d\mu - \frac{1}{2}R |\gr R|^2 d\mu.
$$

Therefore, with the $L^2$-norms of $R, \gr R $ and $\gr^2 R$ in hand, Sobolev's
embedding tells us that the maximum of $|R|$ goes to zero exponentially, which
completes our proof of the Uniformization Theorem in the case $\chi(M) = 0$.

\section{The case $R > 0$}

\subsection{Hamilton's Harnack inequality}

Following the seminal paper \cite{H1}, we begin by deriving a generalization,
by Hamilton, of the Li-Yau Harnack inequality (cf. \cite{Li-Yau}). For completeness,
we state here the classical Harnack inequality.

\bethm
Let $M$ be a compact manifold of dimension $n$ with a  fixed metric with non-negative
Ricci curvature. Let $f>0$ be a solution of
$$
\ddt f = \De f
$$
on $(0, T)$. Then, for any two points $(\xi, \tau)$ and $(X, T)$
in space-time with $0 < \tau < T$,
$$
\tau^{n/2} f(\xi, \tau) \leq e^{\De/4} T^{n/2}f(X, T),
$$
where $\De = \frac{d^2(\xi, X)}{T-t}$, and $d$ is the distance along the shortest geodesic.
\eethm

The idea of the proof, in very rough words,  is to study the evolution equation of the quantity $L = \log f$,
and apply the maximum principle for $Q = \De L$.

For the case of the \RF on surfaces, we need a better version of the Harnack inequality,
as the metrics on the manifold are varying. Hamilton's idea is to consider a new
definition for $\De$, as follows.

\begin{defin}
\label{d.delta}
Let $g(t)$ a family of metrics on a manifold $M$. Define
\beeq
\De((\xi, \tau)(X, T)) = \inf_\ga \int^T_\tau \frac{ds}{dt}^2 dt,
\eeeq
where the infimum is taken over all paths joining $(\xi, \tau)$ to $(X, T)$.
\end{defin}

Note that this definition coincides with the previous one in the case of a fixed metric.
Also, if there are two metrics $h$ and $G$, independent of time, with distances
$d_h(\xi, X)$ and $d_G(\xi, X)$, then
$$
\frac{d_h^2(\xi, X)}{T-t} \leq  \De((\xi, \tau)(X, T))   \leq \frac{d_G^2(\xi, X)}{T-t}
$$
whenever $h(x) \leq g(x, t) \leq G(x)$.

\bigskip

Now, we state Hamilton's Li-Yau Harnack inequality.

\bethm
Let $g$ be a solution of \RF on a compact surface, with $R > 0$ for
$0<\tau<T$.
Then, for any two points $(\xi, \tau)$ and $(X, T)$
in space-time with $0 < \tau < T$,
$$
(e^{r\tau} - 1) R(\xi, \tau) \leq e^{\De/4} (e^{rT} - 1) R(X, T),
$$
where $\De$ is as in Definition~\ref{d.delta}.
\eethm

\begin{proof}
Let $L = \log R $. Then,
$$
\ddt L = \De L  + |\gr R|^2 + R-r.
$$
Consider the quantity $Q = \ddt L - |\gr R|^2$. The evolution equation
for $Q$ is given by
\begin{multline}
\ddt Q = \De \left( \ddt L  \right) + \ddt(\De) L + \ddt R = \\
= \De(Q) + \De(|\gr L|^2) + (R - r)\De L + R \ddt L  =  \\
=\De(Q) + 2(|\gr^2 L| + \ip{\De \gr L, \gr L}) + (R - r)\De L + R (\De L  + |\gr R|^2 + R-r)
= \\=
\De Q + 2 |\gr \gr L|^2 +  \ip{\gr \De L, \gr L} + 2R|\gr L|^2 +
(2R -r) \De L + R(R-r)
= \\ =
\De Q + 2\ip{\gr L,  \gr Q} + 2|\gr \gr L - \frac{1}{2} (R - r)g|^2 + r Q
 \\
 \geq
\De Q + 2\ip{\gr L,  \gr Q} + Q^2+ r Q,
\end{multline}
where the inequality follows from
$$
Q^2 \leq 2|\gr \gr L - \frac{1}{2} (R - r)g|^2.
$$

Once again, the maximum principle applied for $Q$ allows us to
compare $Q$ with the solution of the associated ODE, giving
$$
Q \leq \frac{-re^{rt}}{e^{rt} - 1}.
$$

Now, let $\ga$ be any path joining two points
$(\xi, \tau)$ and $(X, T)$ in space-time. Now,
we just compute
\begin{eqnarray*}
L(X, T) - L (\xi, \tau) & = & \int_\tau^T \frac{d}{dt}L dt \\
& \geq &
\int_\tau^T \left[ |\gr L|^2 - \frac{-re^{rt}}{e^{rt} - 1} + \frac{\partial L}{\partial s} \frac{ds}{dt}\right] dt \\
& \geq &
-\log\left( \frac{e^{rT -1}}{e^{r\tau -1}} - \frac{1}{4} \int_\tau^T \frac{ds}{dt}^2 dt \right).
\end{eqnarray*}
The proof follows from taking exponentials, and noting that the infimum of the last integral
over all paths is the definition of $\De$.
\end{proof}

\subsection{Entropy estimate}

Another step in developing the behavior at infinite time of $R$ is the following result.

\bethm
Let $R$ be the scalar curvature of the solution of the \RF on a surface, with $R>0$.
Then,
$$
\int R \log R
$$
is decreasing, as a function of time.
\eethm

\begin{proof}
Following Hamilton, we consider
$$
Z = \frac{\int QR d\mu}{\int R d\mu}.
$$
Then, $Z$ satisfies
$$
\frac{dZ}{dt} \geq Z^2 + rZ.
$$

If $Z$ would become positive, it would blow up at a finite time,
contradicting the long-time existence result for the \RF on surfaces.
Therefore, {\bf $Z \leq 0$}.

Also,
$$
Q = \frac{\De R}{R} - \frac{|\gr R|^2}{R^2} + (R - r).
$$

Then, if $R > 0$, we get that
$$
\int(R - r)^2 d\mu \leq \int \frac{|\gr R|^2}{R}.
$$

To complete the proof, just observe that
\begin{multline}
\frac{d}{dt} \int R\log R d\mu  = \int \left[\frac{dR}{dt}\log R d\mu +
 \frac{dR}{dt}  +  R(R - r)\log R \right]d\mu  =\\ =
 \int \left[\De R \log R + \De R + R(r-R)\right] d\mu = \\ =
 \int (R - r)^2 d\mu - \int \frac{|\gr R|^2}{R} d\mu \leq 0.
\end{multline}
\end{proof}

\subsection{Uniform bounds for $R$.}

Now, we want to combine Hamilton's Li-Yau harnack inequality with the
entropy estimates to obtain uniform bounds for $R$.

Let $\Rmin(t) = \min_{x \in M}R(x, t)$ and
$\Rmax(t) = \max_{x \in M}R(x, t)$.

\noindent
{\bf Claim:} For any $t \in [\tau, \tau + (2 R_{max}(\tau)) ] = [\tau, T]$,
$$
\Rmax(t) \leq 2 \Rmax(\tau).
$$

To see this, recall that the evolution equation for the curvature is given by
$$
\ddt R = \De R + R^2 -rR.
$$
Hence, at a maximum in space (where $\Rmax > 0 $),
$$
\ddt R \leq \De R + R^2.
$$

By the maximum principle, we can compare $R$ with the solution of
the associated ODE, obtaining
$$
\Rmax (t) \leq \frac{1}{\Rmax^{-1} + \tau - t} \leq 2 \Rmax(\tau).
$$

This allows us to conclude that
$$
g(x, \tau) \leq e g(x, t)
$$
for any $t$ in the interval
$[\tau, \tau + (2 R_{max}(\tau)) ]$.
This follows from integrating the \RF equation:
$$
g(x, t) = e^K g(x, \tau),
$$
where
$$
K = \int_\tau^t (r-R) ds \geq \int_\tau^t -R ds
\geq -2 \int_\tau^T \Rmax(\tau) ds = -1.
$$

Hence, if $d$ is the geodesic distance at time $T$,
$$
\De(\xi, \tau, X, T) \leq C \frac{d^2(X, \xi)}{T - t}.
$$

Applying Hamilton's Li-Yau Harnack inequality, and noting that
$\frac{e^{r\tau -1}}{e^{rT} - 1}$ is a topological constant,
$$
R(\xi, \tau) \leq C R(X, T) \hspace{.5cm} \text{for all} \hspace{.5cm} x \in B\rho(\xi),
$$
where\footnote{This is an application of Klingenberg's Theorem.}
$$
\rho = \frac{\pi}{2 \Rmax(T)^2}.
$$

On the other hand, using the entropy estimate, we obtain
\begin{multline}
C \geq \int R \log R d\mu \geq \int_{B_\rho} R \log(c \Rmax(\tau)) d\mu\\
\geq c \int_{B_\rho} R d\mu + \log(c \Rmax(\tau)) c \int_{B_\rho} R d\mu \geq C \log(c \Rmax(T)),
\end{multline}
{\em i.e.}, $\Rmax (T)$ is bounded, hence $\Rmax(\tau)$
 is bounded. Since this is true for all $\tau > 1$, then $R$ is bounded.

Recall that a bound on $R$ gives a lower bound on the injectivity radius, and
hence an upper bound on the diameter (since the area is constant for the \NRF).
Then, if $T - \tau \leq 1$,
$$
\De(\xi, \tau, X, T)\leq \frac{C}{T - \tau}.
$$
Combining with Harnack, we obtain, for $t \geq 1$,
$$
R(x, t) \leq C R (y, t+1)
$$
for any two points $x, y$.

This completes the proof of the following result.
\bethm
For a solution  of the \RF with $R > 0$ on a compact surface,
there exist constants $0 < c < C < \infty$ such that,
for all times,
$$
c \leq R(t) \leq C.
$$
\eethm

We will refer the reader to \cite{H1} for the proof of the uniform bounds
of the derivatives of $R$. The proof is to estimate inductively the $L^2$-norms
of the derivatives, and to observe that the Sobolev constants may be taken uniformly.

\subsection{Convergence to a constant curvature metric}

The strategy to complete the proof of the main result of this chapter, that is,
to prove that the \RF on surfaces converge to a metric with constant curvature,
is to modify a bit the flow equation by a innocuous term, and note that the resulting
flow converges to a desired metric.

Recall the definition of the trace-free Hessian of $f$
$$
M_{ij} = \gr_i \gr_j f - \frac{1}{2} \De f g_{ij},
$$
where $f$ is the potential for the curvature, which satisfies
$$
\De f = R - r.
$$

\begin{lemma}
The evolution equation for $M$ is given by
$$
\ddt |M_{ij}|^2 = \De |M_{ij}| -2 |\gr_k M_{ij})|^2 - 2 R| M_{ij})|^2.
$$
\end{lemma}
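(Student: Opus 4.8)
The plan is to compute $\ddt |M_{ij}|^2$ directly, using the evolution equations already established in this chapter. Since $|M_{ij}|^2 = g^{ik}g^{jl}M_{ij}M_{kl}$, I first note that
$$
\ddt |M_{ij}|^2 = 2\,\ddt(g^{ik})\,g^{jl}M_{ij}M_{kl}\cdot(\text{correction from two metric factors}) + 2\,g^{ik}g^{jl}M_{kl}\,\ddt M_{ij}.
$$
Under the normalized flow $\ddt g_{ij} = (r-R)g_{ij}$ we have $\ddt g^{ik} = (R-r)g^{ik}$, so the two inverse-metric contractions contribute a term $2(R-r)|M|^2$. Thus the whole computation reduces to finding $\ddt M_{ij}$ and contracting it against $M^{ij}$, then checking that the $(R-r)|M|^2$ pieces cancel correctly against what comes out of $\ddt M_{ij}$, leaving only $-2R|M|^2$.

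The key steps, in order: (i) Differentiate $M_{ij} = \gr_i\gr_j f - \tfrac12(\De f)g_{ij}$. Here $\gr_i\gr_j f = \partial_i\partial_j f - \Gamma_{ij}^k\partial_k f$, so I need $\ddt\Gamma_{ij}^k$, which for a conformal variation $\ddt g = (r-R)g$ is the standard expression in $\gr(r-R) = -\gr R$. I also need $\ddt f$, for which I use the nice normalization $\ddt f = \De f + rf$ from the lemma on the potential, and $\ddt(\De f)$, using $\ddt\De = (R-r)\De$ together with $\De f = R-r$. (ii) Assemble $\ddt M_{ij}$ and recognize the Laplacian term: the leading second-order piece should organize into $\De M_{ij}$ via commuting covariant derivatives (this is where the curvature terms enter, on a surface via $R$ and the Gauss equation $R_{ikjl} = \tfrac12 R(g_{ij}g_{kl} - g_{il}g_{kj})$). (iii) Contract with $M^{ij}$, use the Bochner-type identity $\De|M|^2 = 2\langle M,\De M\rangle + 2|\gr M|^2$ to trade $2\langle M,\De M\rangle$ for $\De|M|^2 - 2|\gr_k M_{ij}|^2$, and collect the remaining zeroth-order terms, which should total $-2R|M|^2$ after the conformal-variation contributions cancel.

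The main obstacle will be step (ii): correctly commuting the covariant derivatives that appear in $\ddt(\gr_i\gr_j f)$ so that the third-order terms reassemble into $\De M_{ij}$ without spurious leftovers, and bookkeeping the resulting curvature contractions on the surface. On a $2$-manifold everything is controlled by the single function $R$, so in principle all Ricci and Riemann terms collapse to multiples of $Rg$ and $RM$, but keeping track of the signs and the factors of $\tfrac12$ from the trace-free projection — and making sure the $(r-R)$ terms produced by $\ddt\Gamma$ and by $\ddt(\De f)$ exactly cancel the $(R-r)|M|^2$ from the inverse metrics plus the $rf$-type contributions from $\ddt f$ — is the delicate part. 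Once that cancellation is verified, the stated evolution equation follows, and the final sentence of the lemma (that $|M_{ij}|$ decays, in fact $M\equiv 0$ is preserved) is then immediate from the scalar maximum principle applied to $|M|^2$, since $R>0$ forces the reaction term $-2R|M|^2 \le 0$.
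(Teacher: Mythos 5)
Your plan follows essentially the same route as the paper: differentiate $M_{ij}=\gr_i\gr_j f-\tfrac12(\De f)g_{ij}$ in time using $\ddt f=\De f+rf$ and $\ddt\Ga^k_{ij}$, reassemble the result into $\De M_{ij}-2RM_{ij}+rM_{ij}$ via the commutator of $\De$ and $\gr\gr$, and then contract with the Bochner identity so that the $(R-r)$ and $r$ terms cancel against the inverse-metric variation, leaving $-2R|M|^2$. The only difference is that you spell out the final contraction step, which the paper leaves implicit after computing $\ddt M_{ij}$; your accounting of where each term comes from is correct.
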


\begin{proof}
\begin{eqnarray*}
\ddt M_{ij} &=& \gr_i \gr_j \ddt f -  (\ddt \Ga^k_{ij}) \gr_k f - \frac{1}{2} \ddt [(R - r) g_{ij}]\\
& = &
\gr_i \gr_j \De f +
\frac{1}{2}
\left(
\gr_i R \gr_j f + \gr_j R \gr_i f - \ip{\gr R, \gr f}g_{ij}
\right) \\
& &  - \frac{1}{2}\De R g_{ij}+ r M_{ij}.
\end{eqnarray*}

Recalling the formula for the commutator of $\De$ and $\gr\gr$, we obtain
$$
\ddt M_{ij} = \De \gr_i \gr_j f - 2RM_{ij} - \frac{1}{2} \De R g_{ij} + r M_{ij}.
$$
\end{proof}

Once again, we apply the maximum principle to get
\begin{cor}
If $R \geq c > 0$, then
$$
|M_{ij}| \leq C e^{ct}.
$$
\end{cor}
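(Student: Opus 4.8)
The plan is to apply the Scalar Maximum Principle (the theorem stated earlier in this chapter) directly to the nonnegative scalar quantity $v = |M_{ij}|^2$. First I would record the evolution equation from the preceding lemma, reading it as
$$
\ddt |M_{ij}|^2 = \De |M_{ij}|^2 - 2|\gr_k M_{ij}|^2 - 2 R |M_{ij}|^2
$$
(the Laplacian acts on $|M_{ij}|^2$, as the heat-type structure requires). The gradient term $-2|\gr_k M_{ij}|^2$ is pointwise nonpositive and can be discarded, while the hypothesis $R \geq c > 0$ gives $-2R|M_{ij}|^2 \leq -2c\,|M_{ij}|^2$. Hence $v$ is a $C^2$ \emph{subsolution} of
$$
\ddt v \leq \De_{g(t)} v + F(v), \qquad F(s) := -2c\,s,
$$
whose reaction term $F$ is globally Lipschitz, exactly the setting of the maximum principle.

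Next I would take $C_2 := \max_{x \in M} |M_{ij}(x,0)|^2$, which is finite since $M$ is compact, solve the associated ODE $\ddt \phi_2 = F(\phi_2) = -2c\,\phi_2$ with $\phi_2(0) = C_2$ to get $\phi_2(t) = C_2 e^{-2ct}$, and invoke the subsolution half of the Scalar Maximum Principle to conclude $|M_{ij}(x,t)|^2 = v(x,t) \leq C_2 e^{-2ct}$ for every $x \in M$ and every $t$ in the interval of existence (which is all of $[0,\infty)$ here, by Theorem~\ref{t.surfacebound}). Taking square roots yields $|M_{ij}| \leq C e^{-ct}$ with $C = C_2^{1/2}$, and in particular the stated bound $|M_{ij}| \leq C e^{ct}$.

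There is no real analytic obstacle: all the content sits in the previous lemma's identity and in the availability of a uniform positive lower bound $R \geq c$. The only point demanding care is that this $c$ is precisely the lower bound furnished by the uniform-bounds theorem proved just above in the $R>0$ case, and that it holds for all time rather than on a finite interval; with that in hand the corollary is immediate. Its role is downstream: the exponential decay of the trace-free Hessian $M_{ij}$ is what makes the (slightly modified) flow converge, and hence is the key input for proving that the Ricci flow on a surface with $R>0$ converges to a metric of constant curvature.
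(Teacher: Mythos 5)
Your proof is correct and is exactly the argument the paper intends: the paper simply says ``we apply the maximum principle'' to the evolution equation of $|M_{ij}|^2$ from the preceding lemma, and you have supplied the details (discarding the nonpositive gradient term, comparing with the ODE $\phi'=-2c\phi$). Note that your conclusion $|M_{ij}|\leq Ce^{-ct}$ is the intended decaying bound; the exponent $e^{ct}$ in the corollary as printed is a sign typo, as the subsequent discussion of exponential convergence of the modified flow confirms.
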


Now, we consider the modified flow
$$
\ddt g_{ij} = M_{ij} = (r- R )g_{ij} - 2\gr_i \gr_j f,
$$
which differs from the \RF only by transport along a family of
diffeomorphisms generated by the gradient vector field of $f$.
Note that $|M_{ij}|$ is invariant under diffeomorphisms.

We will
refer to \cite{Ben1} or \cite{H1} for the proof that also the derivatives of $M_{ij}$
decay exponentially to zero.

So, we proved that
the modified Ricci flow converges exponentially to a metric $g_\infty$ such that
the corresponding $M_\infty$ vanishes. Therefore, $g_\infty$ is a gradient Ricci soliton.
If we prove that the only gradient Ricci solitons on a compact surface are the trivial ones
(metrics with constant curvature), the diffeomorphism invariance tells us that
the solution to the \NRF on a compact surface with $R>0$  converges to a metric of constant curvature.

\begin{prop}
If $g(t)$
 is a soliton solution to \NRF on $S^2$, then
 $g(t) = g(0)$ is a metric with constant curvature.
\end{prop}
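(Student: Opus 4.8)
The plan is to combine the conservation law for gradient Ricci solitons established earlier with the special topology of $S^2$. Recall from the lemma above that on a gradient soliton the quantity $R + |\nabla f|^2 + rf$ is a function of time alone, and that $\Delta f = R - r$. On $S^2$ we have $r = \frac{4\pi\chi(S^2)}{A} = \frac{8\pi}{A} > 0$, which will be crucial: it lets us integrate the conservation law against $R\,d\mu$ or against $d\mu$ and extract real information, whereas for $r = 0$ the argument would degenerate.

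The key steps, in order, are as follows. First, I would use the soliton equation $(R-r)g_{ij} = \nabla_i\nabla_j f$ together with the contracted second Bianchi identity (as in the computation of $\operatorname{div} M$ above) to derive a pointwise identity relating $\nabla R$ and $R\nabla f$; this is exactly the relation $0 = \nabla_i(R + |\nabla f|^2 + rf)$ already in hand. Second, I would compute $\int_{S^2} |\nabla f|^2 \Delta f\, d\mu$ two ways: integrating by parts gives an expression in $\nabla\nabla f$ and $\nabla f$, while substituting $\Delta f = R - r$ and using the conservation law to replace $\nabla R$ by $-R\nabla f - r\nabla f$ gives another. The resulting integral identity should force $\int_{S^2} (R-r)|\nabla f|^2\,d\mu$ to have a fixed sign, and separately $\int_{S^2}(R-r)\,d\mu = 0$ by the divergence theorem. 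Third — and this is the standard route Hamilton uses — I would consider the holomorphic quadratic differential naturally attached to $M_{ij}$: on a surface, the trace-free Hessian $M_{ij}$ is (the real part of) a quadratic differential, and the soliton/Bianchi identities show it is holomorphic. On $S^2 = \mathbb{CP}^1$ there are no nonzero holomorphic quadratic differentials (degree $-4$ line bundle has no sections), hence $M \equiv 0$, which means $\nabla\nabla f = \frac{1}{2}(\Delta f) g$, i.e.\ $f$ is a potential whose Hessian is pure trace; then a classical argument (Obata-type, or directly: $\nabla R$ is parallel to $\nabla f$ and integrates to show $R$ is constant on level sets and then globally) gives $R \equiv r$. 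Finally, $R \equiv r$ on a surface means constant curvature, and since the normalized flow preserves the metric up to diffeomorphism in the soliton case and $S^2$ has no nontrivial such deformation in the round conformal class, $g(t) = g(0)$.

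The main obstacle is the holomorphicity step: one must verify carefully that the $(2,0)$-tensor built from $M_{ij}$ satisfies the Cauchy--Riemann equations, which uses the soliton equation in an essential way (the divergence formula $(\operatorname{div} M)_i = \frac{1}{2}(R\nabla_i f + \nabla_i R)$ above is precisely what makes $\bar\partial$ of this differential vanish once $M$ is the soliton's trace-free Hessian). An alternative, avoiding complex analysis, is to feed $|M_{ij}|^2$ into the evolution-type identity and use the conservation of $R + |\nabla f|^2 + rf$ plus $\int(R-r)d\mu = 0$ to show $\int_{S^2}|M_{ij}|^2\,d\mu = 0$ directly; I expect this to come down to an integration-by-parts identity of Bochner type for $f$ on the surface, and getting the signs and boundary-free terms to line up is where the real care is needed. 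Either way, once $M \equiv 0$ and $R \equiv r$ are in hand, the conclusion is immediate.
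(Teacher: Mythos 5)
Your proposal takes a different route from the one in the text, and as written it has a genuine gap at its central step. The proof here is short: contract the soliton equation $(r-R)g_{ij}=\nabla_iX_j+\nabla_jX_i$ with $Rg^{-1}$, integrate over $S^2$ (using $\int(R-r)\,d\mu=0$) to get $-\int(R-r)^2\,d\mu=\int R\,\mathrm{div}\,X\,d\mu=-\int\langle\nabla R,X\rangle\,d\mu$, and then invoke the Kazdan--Warner identity $\int_{S^2}\langle\nabla R,X\rangle\,d\mu=0$ for the conformal field $X$ --- the one place where uniformization of $S^2$ enters. You never invoke Kazdan--Warner or any substitute for it, so the missing input has to come from somewhere else in your argument, and it does not.

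The gap is the holomorphicity step. For a \emph{gradient} soliton the trace-free Hessian $M_{ij}=\nabla_i\nabla_jf-\tfrac12(\Delta f)g_{ij}$ vanishes identically \emph{by definition}: the soliton equation says precisely that $\nabla\nabla f$ is pure trace. So deducing $M\equiv 0$ from the absence of holomorphic quadratic differentials on $\mathbb{CP}^1$ establishes nothing new. What actually must be shown is that $M\equiv 0$ together with $\Delta f=R-r$ forces $R\equiv r$, and that is exactly the hard part on $S^2$: $M\equiv0$ only says $\nabla f$ is a conformal Killing field, and $S^2$ carries a six-dimensional family of nontrivial conformal fields (on the round sphere the gradients of first spherical harmonics have pure-trace Hessian), so no Riemann--Roch vanishing is available to kill $\nabla f$. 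Your fallback --- ``a classical argument (Obata-type, or directly \dots) gives $R\equiv r$'' --- is precisely the content of the proposition and is left unproved, and the sign of $\int(R-r)|\nabla f|^2\,d\mu$ is asserted (``should force'') rather than derived. A secondary issue: the proposition concerns a general soliton vector field $X$, not a priori a gradient, and you would need a Hodge-type decomposition to reduce to the gradient case before your argument even starts; the Kazdan--Warner route handles general $X$ directly. The genuinely Kazdan--Warner-free classification is the Chen--Lu--Tian argument cited at the end of this section, and it requires substantially more than the vanishing of $H^0(\mathbb{CP}^1,K^2)$.
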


\begin{proof}
The equation for  a Ricci soliton is given by
$$
(r - R)g_{ij} = \gr_i X + \gr_j X.
$$
Contracting with $Rg^{-1}$, we get
$$
2R(r - R) = 2 R \text{div} X,
$$
and hence
$$
-\int_{S^2} (R-r)^2 d\mu = \int_{S^2}R \text{div} X d\mu.
$$

Now, using the Kazdan-Warner identity,  we obtain
$$
\int_{S^2} (R-r)^2 d\mu = \int_{S^2} \ip{\gr R, X} d\mu = 0,
$$
and hence $R = r$.
\end{proof}

An important remark is that the Kazdan-Warner identity is the
only place where we assumed the Uniformization Theorem for the sphere.
In fact, Chen, Lu and Tian \cite{XXChen-Lu-Tian} provide a proof that
the only gradient shrinking solitons on a sphere are the trivial ones
without using uniformization. Therefore, the \RF can be used to prove
the Uniformization Theorem.

\section{The case where $r> 0$, but $R$ is of mixed sign.}

In \cite{Bensphere}, Chow completes the proof of the main result of this chapter,
by proving the following theorem.
\bethm
If $g$ is any initial metric in $S^2$, then under the \NRF, the scalar curvature
$R$ becomes positive in finite time.
\eethm

Once the curvature $R$ becomes positive, we may "restart" the flow, and the results
on the previous section will guarantee the convergence to a constant curvature metric.

Chow's idea is to prove a modified Hamilton-Li-Yau Harnack inequality for the case
where $R$ is of mixed sign, and combine the flow with certain quantities that help to
control the negative parts of the curvature. We refer the reader to \cite{Bensphere}
for the complete, well-detailed exposition of the result.

\clearpage\mbox{}\clearpage

\chapter{Short-Time Existence}

As we have seen in Chapter~\ref{Introduction}, the Ricci flow is not
parabolic, so standard PDE techniques are not sufficient to guarantee that
(\ref{e.RF}) has a solution even for short time.

Hamilton, in \cite{H1}, first proved the short-time existence by the use
of a very fancy analytic tool, the Nash-Moser implicit function theorem,
and his proof was very deep and intruncated.
Soon after, DeTurck \cite{DeTurck} provided a much simpler proof, by noting that
the  \RF equation can be related to a modified parabolic system, in the sense that
solutions of the modified equation will be related to the original \RF.
That is the subject of this chapter.

\section{The linearization of the Ricci tensor}

We start by recalling a few definitions from the theory of
partial differential equations. In this section, the reader should
be warned that the multi-index notation is being used.

Let $\cal V$ and $\cal W$ be vector bundles over $M$, and let $L$
be a {\bf differential operator of order $k$} on $M$, that can
be written as
$$
L(V) = \sum_{|\alpha|\leq k} L_\alpha \partial^\alpha V,
$$
for $V \in C^\infty(\cal V)$, where $L_\alpha \in \text{Hom}(\cal V, \cal W)$.

\bede
We define the {\bf principal symbol}
of the linear differential operator $L$
in the direction of the covector $\xi$ by
$$
\si(L)(\xi) = \sum_{|\alpha|= k} L_\alpha (\Pi_j \xi^{\alpha_j}).
$$
\eede

We leave an easy exercise for the reader: what is the principal symbol for
the Laplacian $\De$ in $\RR^n$?

\bigskip

We now regard $\Ric (g)$ as a non-linear partial differential operator on the metric $g$:
$$
\Ric_g = \Ric(g) : C^\infty(S_2^+T^*M) \rightarrow C^\infty(S_2T^*M).
$$
Its linearization is given by
$$
[\Ric_g'(h)]_{jk} =
\frac{1}{2}g^{pq}\left(
\gr_q \gr_j h_{kp} +
\gr_q \gr_k h_{jp} -
\gr_q \gr_p h_{jk} -
\gr_j \gr_k h_{pq}
\right).
$$
The principal symbol $\si(\Ric_g')$ in the direction of $\xi$
is given by replacing $\gr_i$
by $\xi_i$:
$$
\left[ \si(\Ric_g')(\xi)(h)\right]_{jk} =
\frac{1}{2}g^{pq}\left(
\xi_q \xi_j h_{kp} +
\xi_q \xi_k h_{jp} -
\xi_q \xi_p h_{jk} -
\xi_j \xi_k h_{pq}
\right).
$$

\bede
A linear partial differential operator $L$ is said to be
{\bf elliptic} if the principal symbol $\si(L)(\xi)$ is an isomorphism
for any $\xi \neq 0$.

A non-linear partial differential operator $N$ is {\bf elliptic}
if its linearization $N'$ is elliptic.
\eede

Unfortunately, due to the invariance of the Ricci tensor with
respect to diffeomorphism, {\em i.e.},
\beeq
\label{e.diff}
\Ric(\phi^*(g)) = \phi^*(\Ric(g)),
\eeeq
we will see that the principal symbol of $\Ric$ has non-trivial kernel.

Following the notation in \cite{DeTurck}, we  define,
for any symmetric tensor $T \in S^2(T^*M)$,
\begin{eqnarray*}
tr(T) &=& g^{kl}T_{kl}; \\
G(T)_{ij} & = & T_{ij} - \frac{1}{2} (tr(T))g_{ij}; \\
\de(T)_i & = & -g^{jk}\gr_kT_{ij}
\end{eqnarray*}
As we see, the {\bf divergence $\de$ of a $2$-tensor} defines a map from symmetric $2$-tensors to $1$-forms.
So, we can define an $L^2$-adjoint $\de^*: T^*M \rightarrow S^2 T^*M$ by
$$
v \mapsto \de^*(v) = \frac{1}{2}(\gr_j v_{i} - \gr_i v_{j}) = {\cal L}_{v^\sharp} g.
$$

The total symbol of $\de^*_g$ in the direction of $\xi$ is given by
$$
\left[\si[\de^*_g](\xi)X \right]_{ij} = \frac{1}{2} (\xi_i X_j + \xi_j X_i).
$$

In order to show that the principal symbol of $\Ric$
is not an isomorphism, consider the composition
$$
\Ric_g' \circ \de^*_g : C^\infty(T^*M) \rightarrow C^\infty(S_2 T^*M),
$$
which is, {\it a priori}, a third-order partial differential operator. So, its
principal symbol should have degree $3$.
However, due to (\ref{e.diff}), we have
$$
\left[ \Ric_g' \circ \de^*_g (X) \right]_{ij} = \frac{1}{2}\left[{\cal L}_{X^\sharp}(\Ric_g)\right]_{ij},
$$
where $\cal L_{X^\sharp}$ is the Lie derivative in the direction of the vector $X^\sharp$, dual to $X$.

Note that the right-hand side of the equation above only involves one derivative of $X$,
hence its principal symbol is at most of degree $1$ in $\xi$. Therefore, we conclude that
the principal symbol of $\Ric_g' \circ \de^*_g $ is the zero map.

A property of the symbol, whose simple proof is left to the reader,
is that the symbol of a composition of two operators is the composition
of the symbols of the operators. Then,
\begin{eqnarray*}
0 &=& \si \left[ \Ric_g' \circ \de^*_g \right](\xi) \\
  & =& \si \left[ \Ric_g'\right](\xi) \circ
  \si \left[  \de^*_g \right](\xi)
\end{eqnarray*}

Therefore, the image of $\de^*_g$ is in the kernel
of the linearized Ricci operator. So,
$$
\dim \Ker\si \left[ \Ric_g'\right](\xi) \geq n,
$$
showing that the Ricci operator is not elliptic.

In fact,
we refer to \cite{Ben1} for the proof that the dimension
of that kernel is exactly $n$, which tells us that the
failure of the Ricci operator to being elliptic relies solely
on its diffeomorphism invariance.

DeTurck, in \cite{DeTurck}, made a very clever use of this fact:
he noticed that, by modifying the \RF equation by adding a term
which is the Lie derivative of a metric with respect to a
vector field, that in its turn depends on the metric.

The modified equation becomes parabolic, and its solution
can be pulled back to a solution to the original \RF equation
by a carefully chosen family of diffeomorphisms. This is the subject of next section.

\section{DeTurck trick}

Rewriting the linearization of the
$\Ric$ operator,
\beeq
\label{e.linricci}
\Ric'(g)(h) = \frac{1}{2} \De_L h - \de^*(\de G(h)),
\eeeq
where the first term is the {\bf Lichnerovicz Laplacian} for tensors,
given by
\beeq
\label{e.Lich}
(\De_L)_{jk} = \De h_{jk} + 2 g^{qp}R^r_{qjk}h_{rp} - g^{qp}R_{jp}h_{qk} - g^{qp}R_{kp}h_{jq}.
\eeeq
and the second term in (\ref{e.linricci}) is such that
symmetric squares of $1$-forms belong to the kernel of the
linearized Ricci operator.

DeTurck, in \cite{DeTurck2}, showed that for any choice  of symmetric
$2$-tensor $T$, the expression $\de^*(T^{-1}\de G(T))$ has always the same
symbol as the second term in the right-hand side of
(\ref{e.linricci}).
With this in mind, it is natural to consider
the modified operator
$$
Q(g) = \Ric(g) - \de^*(T^{-1}\de G(T)),
$$
which is elliptic! The obvious choice for a symmetric
tensor $T$ is to take the initial Riemannian metric $g_0$ on $M$.

Therefore, the parabolic  system
\beeq
\label{e.Q}
\begin{cases}
\ddt g = -2 Q(g)
\end{cases}
\eeeq
has a solution defined for short time.

Now, we need to argue that a solution of (\ref{e.Q})
can be translated to a solution of the \RF equation (\ref{e.RF}).

\begin{lemma}
Let $v(y,t)$ be a vector field on $M$.
Then, for small $t$, there exists a unique
family of diffeomorphisms $\phi_t: M \rightarrow M$ such that
$$
\begin{cases}
\ddt \phi_t(x) = v(\phi_t(x), t),\\
\phi_0(x) = \text{id}.
\end{cases}
$$
\end{lemma}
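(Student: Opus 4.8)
The plan is to recognize this as a classical ODE existence-uniqueness statement dressed up in manifold language, and to reduce it to the Picard--Lindelöf theorem. First I would set up the problem in local coordinates: since $M$ is compact, cover it with finitely many coordinate charts. In a given chart, the condition $\ddt \phi_t(x) = v(\phi_t(x),t)$ becomes a system of first-order ODEs $\dot y(t) = \bar v(y(t),t)$ where $\bar v$ is the coordinate representative of $v$ and $y(t)$ tracks $\phi_t(x)$. Because $v(y,t)$ is smooth (in particular $C^1$ in the space variables, locally Lipschitz, and continuous in $t$), the standard Picard--Lindelöf / Cauchy--Lipschitz theorem gives, for each initial point $x$, a unique maximal solution on some interval $(-\tau_x,\tau_x)$ with $y(0)$ equal to the coordinates of $x$. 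Uniqueness is immediate from the ODE uniqueness theorem, and this is what forces the collection $\{\phi_t\}$ to be well-defined independent of the chart chosen (two local solutions through the same point agree on their common interval of definition).

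Next I would address why $t$ can be taken uniformly small over all $x\in M$, i.e. why there is a single $\varepsilon>0$ with $\phi_t$ defined for all $x$ and all $|t|<\varepsilon$. This follows from compactness: the existence time in Picard--Lindelöf depends only on a local bound on $\|v\|$ and its Lipschitz constant, both of which are bounded on the compact set $M\times[-1,1]$ (using, say, an auxiliary Riemannian metric to measure sizes). A standard covering argument then produces a uniform $\varepsilon$. Then I would verify that $\phi_t$ is a diffeomorphism for $|t|<\varepsilon$: smooth dependence of solutions of ODEs on initial conditions (again classical, from the smoothness of $v$) shows $x\mapsto\phi_t(x)$ is smooth; and the identity $\phi_t\circ\phi_s = \phi_{t+s}$ on the appropriate domain — which itself follows from uniqueness of solutions, since both sides solve the same ODE with the same initial value at the relevant time — shows $\phi_{-t}$ is a smooth two-sided inverse of $\phi_t$. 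Finally $\phi_0 = \mathrm{id}$ by construction.

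The only mild subtlety — and the closest thing to an obstacle — is confirming that the locally defined solution curves actually patch together into globally defined maps $\phi_t:M\to M$ without the trajectory escaping to "infinity" in finite time; but this is exactly where compactness of $M$ is used: a maximal solution on a compact manifold that does not exist for all time would have to leave every compact set, which is impossible, so in fact one can take $\varepsilon$ as large as one likes (for this lemma, small $t$ suffices, so I would not belabor this). Everything else is a direct appeal to the standard theory of ordinary differential equations applied chart-by-chart, with uniqueness providing the glue. I would present the argument at roughly this level of detail, citing the Picard--Lindelöf theorem and smooth dependence on parameters rather than reproving them.
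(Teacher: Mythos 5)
Your overall strategy is exactly what the paper intends: the text simply declares the proof ``analogous to the standard ODE case'' and leaves it as an exercise, and your reduction to Picard--Lindel\"of in charts, with compactness of $M$ supplying a uniform existence time and preventing escape to infinity, is the standard way to fill that in. Uniqueness, smooth dependence on initial data, and $\phi_0=\mathrm{id}$ are all handled correctly.

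One step as written would fail: for a \emph{time-dependent} vector field $v(y,t)$ the one-parameter family $\phi_t$ does not satisfy $\phi_t\circ\phi_s=\phi_{t+s}$ (that identity is special to autonomous fields), so you cannot invoke it to conclude that $\phi_{-t}$ inverts $\phi_t$. The correct device is the two-parameter flow $\phi_{t,s}$ obtained by solving the ODE from time $s$ to time $t$ with $\phi_{s,s}=\mathrm{id}$; uniqueness gives the cocycle identity $\phi_{t,s}\circ\phi_{s,r}=\phi_{t,r}$, and hence $\phi_{s,t}$ is a smooth two-sided inverse of $\phi_{t,s}$. Taking $\phi_t=\phi_{t,0}$ then yields the family of diffeomorphisms in the lemma. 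With that substitution your argument is complete.
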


The proof of this lemma is left as an exercise to the reader,
as it is analogous to the standard ODE case.

Another necessary ingredient is the following result.
\begin{lemma}
Let $g(t)$  be a family of Riemannian metrics, and
let $\phi_t$ be the $1$-parameter diffeomorphism family related to
the vector field $v(y, t)$. Then
\beeq
\ddt \phi_t^*(g) (x) = \phi_t^*\left( \ddt g(\phi_t(x))\right) + 2
\phi_t^*\left( \de^* v^{\sharp} (\phi_t(x))\right),
\eeeq
where $\sharp$ is the map corresponding vector fields and $1$-forms.
Both the maps $\sharp$ and $\de^*$ are defined with respect to $g$.
\end{lemma}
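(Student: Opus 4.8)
The plan is to compute the $t$-derivative of $\phi_t^*(g)$ directly from the definition of pullback, tracking the two sources of $t$-dependence separately: the explicit $t$ in the metric $g(t)$, and the implicit $t$ coming from the moving map $\phi_t$. First I would write, in local coordinates, $(\phi_t^*g)(x) = g_{ab}(\phi_t(x),t)\, d\phi_t^a \otimes d\phi_t^b$, and differentiate the product. The term in which $\ddt$ hits the explicit $t$-slot of $g$ contributes exactly $\phi_t^*\!\left(\ddt g(\phi_t(x))\right)$, by the chain rule and the definition of pullback. The remaining terms are where $\ddt$ hits $\phi_t$ through the position argument $\phi_t(x)$ and through the differentials $d\phi_t^a$; using the ODE $\ddt \phi_t(x) = v(\phi_t(x),t)$ satisfied by $\phi_t$ (from the previous lemma), these remaining terms assemble into the pullback of the Lie derivative $\mathcal{L}_{v^\sharp} g$ evaluated at $\phi_t(x)$. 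Concretely, I would invoke the standard identity $\ddt \phi_t^* g = \phi_t^*\!\left(\partial_t g\right) + \phi_t^*\!\left(\mathcal{L}_{v^\sharp} g\right)$ and then rewrite the Lie derivative term.

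The second and final step is purely algebraic: identify $\mathcal{L}_{v^\sharp} g$ with $2\,\delta^* v^\sharp$ (abusing notation so that $v^\sharp$ denotes the $1$-form associated to the vector field $v$, i.e. lowering with $g$). This is immediate from the definition of $\delta^*$ recorded in the previous section, namely $\delta^*(w)_{ij} = \tfrac12(\nabla_i w_j + \nabla_j w_i) = \tfrac12 \mathcal{L}_{w^\sharp} g$, so that $\mathcal{L}_{v^\sharp} g = 2\,\delta^*(v^\sharp)$. Substituting this into the expression from the first step yields exactly the claimed formula
$$
\ddt \phi_t^*(g)(x) = \phi_t^*\!\left(\ddt g(\phi_t(x))\right) + 2\,\phi_t^*\!\left(\delta^* v^\sharp(\phi_t(x))\right).
$$
One should be slightly careful about the point at which the metric, and hence the raising/lowering maps $\sharp$ and $\delta^*$, are evaluated: since $\delta^*$ is defined with respect to $g$ (not $\phi_t^* g$), the Lie-derivative computation must be performed at the point $\phi_t(x)$ before pulling back, which is precisely how the formula is stated.

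The only genuinely delicate point — the "hard part," though it is not really hard — is bookkeeping the two contributions to $\ddt \phi_t^* g$ and making sure the moving-frame/position derivatives genuinely combine into the Lie derivative rather than just the symmetrized covariant derivative; this is the content of the classical formula for the variation of a pulled-back tensor along a flow, and one may either quote it or derive it by differentiating $(\phi_t^*g)(X,Y) = g(d\phi_t X, d\phi_t Y)\circ\phi_t$ and using that the velocity of $\phi_t$ is the (time-dependent) vector field $v$. Everything else is a one-line substitution of the definition of $\delta^*$.
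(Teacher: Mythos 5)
Your argument is correct and complete: the splitting of $\ddt\phi_t^*(g)$ into the pullback of $\partial_t g$ plus the pullback of $\mathcal{L}_{v}g$, followed by the identification $\mathcal{L}_{v}g = 2\,\de^*(v^\sharp)$, is exactly the standard computation this lemma encodes, and the paper itself states the lemma without proof, so there is no alternative route to compare against. One small remark: the paper's displayed definition of $\de^*$ contains a typo (a minus sign where the symmetrization should have a plus, and a missing factor of $\tfrac{1}{2}$ in the identification with the Lie derivative); the convention you use, $\de^*(w)_{ij} = \tfrac{1}{2}(\gr_i w_j + \gr_j w_i) = \tfrac{1}{2}(\mathcal{L}_{w^\sharp}g)_{ij}$, is the correct one and is what makes the factor of $2$ in the lemma come out right.
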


Now we are in position of proving the main result of this section.
Choose $v$ to be the vector field dual to the $1$-form
$$
v^\sharp = -T^{-1}(\de G(T)),
$$
and let $\phi_t$ be the $1$-parameter family of diffeomorphisms associated to
$v$.

If $g$ is a solution to the modified equation (\ref{e.Q}), we have
\begin{eqnarray*}
\ddt \phi_t^*(g) & = & \phi_t^*\left( \ddt g \right) + 2
\phi_t^*\left( \de^* v^\sharp \right) \\
&=& \phi_t^*(-2Q(g))  + 2 \phi_t^*\left( \de^* v^\sharp \right)\\
&=& -2 \Ric (\phi_t^*(g)),
\end{eqnarray*}
that is, $\phi_t^*(g)$ is a solution to (\ref{e.RF}).

This completes the proof of the short-time existence of a solution to
the unnormalized \RF.

\clearpage\mbox{}\clearpage

\chapter{Ricci Flow in $3$ dimensions}
\label{RicciFlow3d}

The goal of this chapter is to present the proof of Hamilton's theorem
for $3$-dimensional Riemannian manifolds with positive Ricci curvature,
following the paper \cite{H2}.

\bethm
{\bf (Hamilton)}
Let $M$ be a compact $3$-dimensional manifold which admits a metric
with strictly positive Ricci curvature. Then, $M$ admits a metric of
constant positive curvature.
\eethm

In order to complete this task, we will study
thoroughly the \RF equation (\ref{e.RF}) on three-dimensional
manifolds.

\section{Evolution of curvatures under the \RF}

We state here the evolution equations for all the geometric objects
under \RF. For the proof of those expressions, we refer the reader to
\cite{Ben1}.

\bigskip

\begin{lemma}
Let $g(t)$ be a solution to \RF. Then,
\begin{enumerate}
\item
The Levi-Civita connection $\Ga(g)$ evolves by
\beeq
\ddt \Ga^k_{ij} = -g^{kl}(\gr_iR_{jl} + \gr_jR_{il} - \gr_lR_{ij}).
\eeeq
\item
The Ricci tensor $\Ric (g)$ evolves by
\beeq
\ddt R_{jk} = \De R_{jk} + \gr_j\gr_k R  - g^{pq}\left(\gr_q\gr_j R_{kp} + \gr_q\gr_k R_{jp} \right).
\eeeq
\item
The scalar curvature $R$ evolves by
\beeq
\ddt R = 2 \De R  - 2 g^{jk}g^{pq}\gr_q \gr_j R_{kp} + 2 |\Ric|^2.
\eeeq
\end{enumerate}
\end{lemma}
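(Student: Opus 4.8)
The plan is to derive the three evolution equations directly from the Ricci flow equation $\ddt g_{ij} = -2R_{ij}$ by differentiating the defining formulas for each geometric quantity. The main tool throughout will be the variation formulas for the metric and its inverse: from $\ddt g_{ij} = -2R_{ij}$ one gets $\ddt g^{ij} = 2R^{ij}$, and $\ddt \log\det g = g^{ij}\ddt g_{ij} = -2R$. Everything else is a bookkeeping exercise in covariant differentiation, with one standard input: the general first-variation formula for the Christoffel symbols under an arbitrary metric variation $\ddt g_{ij} = h_{ij}$, namely $\ddt \Ga^k_{ij} = \frac{1}{2}g^{kl}(\gr_i h_{jl} + \gr_j h_{il} - \gr_l h_{ij})$. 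This last formula is itself proved by writing $\Ga^k_{ij}$ in terms of $g$ and its derivatives, differentiating in $t$, and observing that the non-tensorial pieces cancel so that the answer can be written covariantly (it suffices to check it at the center of normal coordinates).

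For part (1), I would substitute $h_{ij} = -2R_{ij}$ into the first-variation formula for the Christoffel symbols, which immediately yields $\ddt \Ga^k_{ij} = -g^{kl}(\gr_i R_{jl} + \gr_j R_{il} - \gr_l R_{ij})$. For part (2), I would start from the formula expressing the Ricci tensor as a contraction of derivatives of the Christoffel symbols, $R_{jk} = \partial_p \Ga^p_{jk} - \partial_k \Ga^p_{jp} + (\text{quadratic in } \Ga)$, and differentiate in $t$. Since $\ddt \Ga$ is a tensor (part (1)), the time derivative of $R_{jk}$ can be written covariantly as $\ddt R_{jk} = \gr_p(\ddt\Ga^p_{jk}) - \gr_k(\ddt \Ga^p_{jp})$; plugging in the expression from part (1) and then using the contracted second Bianchi identity $g^{pq}\gr_q R_{jp} = \frac{1}{2}\gr_j R$ together with a Ricci-identity commutation to absorb curvature terms into a Laplacian produces the stated Lichnerowicz-type equation $\ddt R_{jk} = \De R_{jk} + \gr_j\gr_k R - g^{pq}(\gr_q\gr_j R_{kp} + \gr_q\gr_k R_{jp})$ (one may alternatively just quote the linearization of $\Ric$ recorded later in the chapter on short-time existence). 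For part (3), I would write $R = g^{jk}R_{jk}$ and use the product rule: $\ddt R = (\ddt g^{jk})R_{jk} + g^{jk}\ddt R_{jk} = 2R^{jk}R_{jk} + g^{jk}\ddt R_{jk}$; substituting part (2) for $\ddt R_{jk}$ and contracting gives $\ddt R = 2\De R - 2g^{jk}g^{pq}\gr_q\gr_j R_{kp} + 2|\Ric|^2$, since the $2R^{jk}R_{jk}$ term is exactly $2|\Ric|^2$ and $g^{jk}\gr_j\gr_k R = \De R$ combines with the $2\De R$ coming from $g^{jk}\De R_{jk}$ — wait, more carefully: $g^{jk}\De R_{jk} = \De R$ and $g^{jk}\gr_j\gr_k R = \De R$, which together give the coefficient $2$ on $\De R$.

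The main obstacle is purely organizational rather than conceptual: keeping the commutator terms straight. When one commutes covariant derivatives to rewrite $g^{pq}\gr_q\gr_j R_{kp}$ in terms of $\De R_{jk}$ plus curvature corrections, and when one applies the contracted Bianchi identities, it is easy to lose a term or a sign, and the curvature-quadratic terms from the variation of $\Ric$ must be tracked carefully to see that they reassemble into the clean Laplacian form stated. For this reason I would either be very disciplined about working in normal coordinates at a point and restoring covariance at the end, or — given the introductory nature of these notes — simply refer to \cite{Ben1} for the detailed computation, as the lemma statement already does, and present only the structural outline above so the reader sees where each term originates.
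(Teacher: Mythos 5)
Your proposal is correct, and it supplies exactly the computation that the notes omit: the text states the lemma and defers the proof to \cite{Ben1}, so there is no in-paper argument to compare against, but your route (first-variation of the Christoffel symbols, then the linearization of $\Ric$, then tracing) is the standard one and each step checks out. One small simplification: for part (2) as stated you do not actually need the contracted second Bianchi identity or any commutation of covariant derivatives. Writing $\ddt R_{jk} = \gr_p(\ddt\Ga^p_{jk}) - \gr_k(\ddt\Ga^p_{jp})$, noting $\ddt\Ga^p_{jp} = \tfrac12\gr_j(g^{pq}h_{pq}) = -\gr_j R$, and substituting $h=-2\Ric$ already yields
$$
\ddt R_{jk} = \De R_{jk} + \gr_j\gr_k R - g^{pq}\left(\gr_q\gr_j R_{kp} + \gr_q\gr_k R_{jp}\right)
$$
verbatim; the Bianchi identity and the Ricci commutation formulas only enter later, when one converts this raw linearization into the Lichnerowicz form $\ddt R_{jk}=\De_L R_{jk}$ used in Chapter~\ref{RicciFlow3d} (and, for part (3), when one passes from the stated equation to $\ddt R = \De R + 2|\Ric|^2$). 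Your trace computation in part (3), including the bookkeeping that produces the coefficient $2$ on $\De R$ and the term $2|\Ric|^2$ from $\ddt g^{jk}=2R^{jk}$, is correct as written.
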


From the second Bianchi identity
$$
\gr_m R_{ijkl} + \gr_k R_{ijlm} + \gr_l R_{ijmk} = 0,
$$
we obtain a nice form for the evolution of $R$:
\beeq
\label{e.scalar}
\ddt R = \De R + 2 |\Ric|^2.
\eeeq

Note that $|\Ric|^2 \geq 0$, hence the maximum principle
applied to (\ref{e.scalar}) implies the following positivity result:

\begin{lemma}
Let $g(t)$ be a solution to \RF with initial condition $g_0$.

If the scalar curvature $R(g_0)$ of the initial metric is bounded below by
some constant $C$, then $R(g(t))\geq C$ for as long as the solution exists.
\end{lemma}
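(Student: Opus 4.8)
The plan is to apply the scalar maximum principle (the version proved earlier in this excerpt, specialized to the subsolution/supersolution setting, or its elementary self-contained form) to equation (\ref{e.scalar}), namely $\ddt R = \De R + 2|\Ric|^2$. First I would observe that since $|\Ric|^2 \geq 0$ pointwise, we have the differential inequality $\ddt R \geq \De R$, so $R$ is a supersolution of the plain heat equation $\ddt u = \De u$ with respect to the time-dependent metrics $g(t)$. The reaction term here is $F \equiv 0$, so the associated ODE $\ddt \phi = F(\phi) = 0$ with initial condition $\phi(0) = C$ has the constant solution $\phi(t) \equiv C$, which exists for all time.

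Next I would simply invoke the Scalar Maximum Principle (stated earlier): since $R(x,0) = R(g_0)(x) \geq C$ for all $x \in M$, and $M$ is compact, the theorem gives $R(x,t) \geq \phi(t) = C$ for all $x \in M$ and all $t$ for which the Ricci flow solution is defined. That is exactly the claim. If one wants to keep the argument self-contained rather than quoting the theorem, the key computational step is to set $H = R - C + \ve t + \ve$ for $\ve > 0$, note $\ddt H \geq \De H + \ve$, and argue by contradiction at a first interior minimum point $(x_0, t_0)$: there $\ddt H \leq 0$, $\De H \geq 0$, forcing $0 \geq \ddt H(x_0,t_0) \geq \ve > 0$, a contradiction; then let $\ve \to 0$.

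Honestly, there is no real obstacle here — the statement is an immediate corollary of the machinery already built in the previous chapter, and the only thing to be careful about is that the maximum principle as stated was formulated for a family of metrics $g(t)$ on a closed manifold, which is precisely our situation, and that the Laplacian in (\ref{e.scalar}) is $\De_{g(t)}$, matching the hypotheses. So the proof is essentially one line: drop the non-negative term $2|\Ric|^2$ and compare with the constant.

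\begin{proof}
By equation (\ref{e.scalar}), $\ddt R = \De R + 2|\Ric|^2 \geq \De R$, since $|\Ric|^2 \geq 0$. Thus $R$ is a supersolution of the heat equation $\ddt v = \De_{g(t)} v + F(v)$ with $F \equiv 0$. By hypothesis $R(x,0) \geq C$ for all $x \in M$, and the solution of the ODE $\ddt \phi = F(\phi) = 0$ with $\phi(0) = C$ is $\phi(t) \equiv C$, defined for all $t$. The Scalar Maximum Principle on the closed manifold $M$ then yields $R(g(t))(x) = R(x,t) \geq \phi(t) = C$ for all $x \in M$ and all $t$ in the interval of existence of the flow.
\end{proof}
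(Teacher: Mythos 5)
Your proof is correct and follows exactly the route the paper intends: the text introduces the lemma with the remark that $|\Ric|^2 \geq 0$ and that the maximum principle applied to (\ref{e.scalar}) gives the result, which is precisely your argument of dropping the non-negative reaction term and comparing $R$ with the constant solution $\phi(t)\equiv C$ of the trivial ODE. The self-contained $\ve$-perturbation argument you sketch also quietly disposes of the (inessential) sign restriction $C_1>0$ in the stated Scalar Maximum Principle, so nothing is missing.
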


Now we analyze the Ricci tensor. Its evolution is given by
$$
\ddt \Ric_{ij} = \De_L \Ric_{jk},
$$
where $\De_L$ is the Lichnerovicz laplacian defined in the previous chapter.

Written in this form, one may be led to think that the maximum principle
could be applied to the equation above, and that would imply that positivity
of the Ricci curvature is preserved along the flow.
However, $\De_L$ contains terms coming from the whole Riemann curvature tensor,
so more work is need to guarantee that positivity  is preserved.

We note that in $3$ dimensions, the Weyl tensor \footnote{The Weyl tensor is the conformal part
of the curvature tensor.} vanishes identically, which allows the
Ricci curvature to determine completely the Riemann tensor:
$$
R_{ijkl} = R_{il}g_{jk} + R_{jk}g_{il} - R_{ik}g_{jl} + R_{jl}g_{ik} -\frac{R}{2}( g_{il}g_{jk} - g_{ik}g_{jl}).
$$

Hence, we can rewrite the evolution of the Ricci curvature as
\begin{eqnarray}
\label{e.Riccievol}
\ddt \Ric_{jk} &=&  \De R_{jk} + 3R R_{jk} - 6 g^{pq}R_{jp}R_{qk} + (2 |\Ric|^2 - R^2)g_{jk}\\
 &=& \De R_{jk} + Q_{jk},
\end{eqnarray}
where the tensor $Q_{jk}$ is defined by the expression above.

\section{Maximum Principle for tensors}

In order to check that $\Ric \geq 0$ is preserved along the flow,
we need
a version of the maximum principle that can be applied to tensors.

Let $u^k$ be a vector field, and let
$M_{ij}$, $g_ij$ and $N_{ij}$ be symmetric tensors
(that may depend on $t$)
on a compact manifold $M$.
Assume that $N_{ij} = p(M_{ij}, g_{ij})$ is a polynomial in
$M_{ij}$  formed by contracting products of $M_{ij}$ with itself
using $g_{ij}$.

\bede
The tensor $N_{ij}$
satisfy the {\bf null-eigenvector condition}
if for any nullvector $v^i$ of $M_{ij}$,
$$
N_{ij}v^iv^j \geq 0.
$$
\eede

\bethm
\label{t.maxtensor}
{\bf (Maximum Principle for tensors)}
Suppose that on $0 \leq t \leq T$,
$$
\ddt M_{ij} = \De M_{ij} + u^k \gr_k M_{ij} + N_{ij},
$$
where $N_{ij} = p(M_{ij}, g_{ij})$ satisfies the
null-eigenvector condition.

Then, if $M_{ij} \geq 0$ at time zero, then it remains
greater than or equal to zero for all $t \in [0,T]$.

\eethm

\begin{proof}
The strategy of the proof is to show that $M_{ij}$ is non-negative
on a small interval $0\leq t \leq \de$, where $\de$ is very small (but uniform).
Splitting the whole intervals in sizes smaller than such $\de$ will complete
the proof of the result.

We define
$$
\tilde M_{ij} = M_{ij}+ \ve (\de + t) g_{ij}.
$$

\noindent
{\bf Claim:} $\tilde M_{ij} > 0 $ on $0\leq t \leq \de$, for every $\ve> 0$.

Letting $\ve \rightarrow 0$ proves the theorem.

To prove the claim, assume not. Then, there exists a
first time $\theta$, $0< \theta \leq \de$ such that
$\tilde M_{ij}$ acquires a null  unit eigenvector $v^i$ at
some point $p \in M$.

Then, if $\tilde N_{ij} = p(\tilde M_{ij}, g_{ij})$, then
$\tilde N_{ij}v^i v^j \geq 0$ at $(p, \theta)$.
Moreover,
$$
|\tilde N_{ij} - N_{ij}| \leq C |\tilde M_{ij} - M_{ij}|,
$$
where $C$ only depends on $\max(|\tilde M_{ij}| + |M_{ij}|)$,
because $\tilde N$ and $N$ are just polynomials.
Furthermore, if we choose $\ve, \de < 1$, then
the constant $C$ only depends on $\max(|M_{ij}|)$.

\bigskip

We can extend the null eigenvector $v^i$ to a vector field on a
neighborhood of the point $p \in M$, such that
$\gr_j v^i = 0$ at $p$, and such that the vector field is not
time-dependent.

Write $f = \tilde M_{ij}v^i v^j$. Then, by our construction, $f\geq 0$
for all $0\leq t \leq \theta$ and all of $M$. Hence, since
$(p, \theta )$
is a minimum,
\beeq
\ddt f(p ) < 0, \hspace{1cm} \gr_kf = 0 \hspace{.5cm}\text{and} \hspace{.5cm} \De f \geq 0
\eeeq
at $(p, \theta)$.

Recalling that $v$ is not time-dependent, and
$g$ is a solution of the \RF,
\begin{eqnarray}
\label{e.abovef}
\ddt f &=& (\ddt M_{ij})v^iv^j + \ve - 2\ve(\de + t)R_{ij}v^iv^j \\
& \geq &
(\ddt M_{ij})v^iv^j  + \frac{\ve}{2},
\end{eqnarray}
provided that $\de \leq (8 \max |R_{ij}|)^{-1}$.

Also, at the point $(p, \theta)$,
\beeq
\gr_k f = \gr_k M_{ij} v^i v^j  \hspace{.5cm} \text{and} \hspace{.5cm} \De f = \De M_{ij} v^i v^j.
\eeeq
Plugging these in (\ref{e.abovef}), we see that
\begin{eqnarray}
0 > \ddt f &\geq &(\ddt M_{ij})v^iv^j  + \frac{\ve}{2} \\
& = &
\De M_{ij} v^i v^j + u^k\gr_k M_{ij} v^i v^j + N_{ij}v^iv^j + \frac{\ve}{2}.
\end{eqnarray}
The first term in the right-hand side of the equation above is non-negative,
while the second term vanishes. Hence, we conclude that
$$
-C \ve \de \geq N_{ij} v^iv^j < -\frac{\ve}{2},
$$
which produces a contradiction if $\de$ is chosen to be sufficiently small.
This proves that $M_{ij} \geq 0$ on $0 \leq t \leq \de$. To complete the proof,
just split the whole interval in pieces of length smaller than $\de$, and apply the
result for each interval, from left to right.
\end{proof}

\bigskip

Now, the result about preserving non-negativity of the Ricci curvature is a straightforward corollary.

\begin{cor}
Under the \RF on a $3$-dimensional manifold, if the initial metric
$g_0$ has non-negative Ricci curvature, then
$$
\Ric(t) \geq 0
$$
for as long as the solution exists.
\end{cor}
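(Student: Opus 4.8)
The plan is to apply the tensor maximum principle of Theorem~\ref{t.maxtensor} directly to the evolution equation~(\ref{e.Riccievol}) for the Ricci tensor. In dimension three the Weyl tensor vanishes, and this is precisely what let us write
$$
\ddt R_{jk} = \De R_{jk} + Q_{jk}, \qquad Q_{jk} = 3R R_{jk} - 6 g^{pq}R_{jp}R_{qk} + (2|\Ric|^2 - R^2)g_{jk},
$$
so that $Q_{jk}$ is a genuine polynomial in $R_{ij}$ and $g_{ij}$ — built from products of $R_{ij}$ contracted with $g_{ij}$ — with no leftover full Riemann curvature terms. Thus, setting $M_{ij} = R_{ij}$, $N_{ij} = Q_{ij}$ and $u^k = 0$, the hypotheses of Theorem~\ref{t.maxtensor} are satisfied as soon as one verifies that $Q$ obeys the null-eigenvector condition.

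The heart of the proof is therefore the following algebraic check. Let $v$ be a unit vector with $R_{jk}v^k = 0$. Then in $Q_{jk}v^jv^k$ the term $3R\,R_{jk}v^jv^k$ vanishes, and so does $-6 g^{pq}(R_{jp}v^j)(R_{qk}v^k)$ by symmetry of the Ricci tensor; hence $Q_{jk}v^jv^k = 2|\Ric|^2 - R^2$. Diagonalizing $\Ric$, one of its eigenvalues — the one belonging to $v$ — equals $0$, and writing the other two as $\la_2,\la_3$ we get
$$
2|\Ric|^2 - R^2 = 2(\la_2^2 + \la_3^2) - (\la_2 + \la_3)^2 = (\la_2 - \la_3)^2 \geq 0,
$$
which is exactly the null-eigenvector condition for $Q$.

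With this in hand, Theorem~\ref{t.maxtensor} applies to~(\ref{e.Riccievol}) with $M_{ij} = R_{ij}$: since $\Ric(g_0) \geq 0$ at $t = 0$, we conclude $\Ric(t) \geq 0$ for as long as the flow exists. The only real obstacle is the null-eigenvector computation above, and it is genuinely three-dimensional: it is the vanishing of the Weyl tensor that makes $\ddt\Ric$ expressible through $\Ric$ and $g$ alone — so that $Q$ is an admissible reaction term at all — and the inequality $2|\Ric|^2 \geq R^2$ on a null eigenvector would fail with four or more non-negative eigenvalues. Everything else is a direct appeal to the tensor maximum principle already established (and one could note in passing that the same scheme, applied to~(\ref{e.scalar}), already gave preservation of $R \geq 0$).
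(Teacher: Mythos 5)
Your proposal is correct and follows the same route as the paper: apply Theorem~\ref{t.maxtensor} to (\ref{e.Riccievol}) with $M_{ij}=R_{ij}$, $u^k=0$, and reaction term $Q_{ij}$ (the paper writes $N_{ij}=-Q_{ij}$, an internal sign inconsistency with how (\ref{e.Riccievol}) is displayed; your sign is the consistent one). The only difference is that you actually carry out the null-eigenvector verification $Q_{jk}v^jv^k=2|\Ric|^2-R^2=(\la_2-\la_3)^2\geq 0$, which the paper's one-line proof leaves implicit; that computation is correct and is the genuinely three-dimensional step.
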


\begin{proof}
We apply the maximum principle for tensors, with
$M_{ij} = R_{ij}$, $u^k = 0$ and
$N_{ij}  = -Q_{ij}$,
where $Q_{ij}$ is the tensor defined in the evolution equation for $R_{ij}$
in (\ref{e.Riccievol}).
\end{proof}

We remark that we can use this form of the maximum principle for tensors
because $Q_{ij}$ is indeed a polynomial on $R_{ij}$, and this phenomenon
does not happen in dimensions higher than $3$.

\bigskip

A straightforward computation yields the following result:
\begin{lemma}
If $R \neq 0$,
$$
\ddt \left(\frac{R_{ij}}{R} \right)=
\De\left(\frac{R_{ij}}{R} \right)
+ \frac{2}{R} g^{pq}\gr_pR \gr_q\left(\frac{R_{ij}}{R} \right)
- \frac{RQ_{ij}+ 2 SR_{ij}}{R^2},
$$
where $S$ (as in \cite{H2}) is given by
$$
S = g^{il}R_{ij}g^{jk}R_{kl}.
$$
\end{lemma}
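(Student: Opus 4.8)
The plan is a direct computation, using only the quotient rule in time together with the two evolution equations already in hand: $\ddt R_{ij} = \De R_{ij} + Q_{ij}$ from (\ref{e.Riccievol}) and $\ddt R = \De R + 2S$ from (\ref{e.scalar}), where I use that $|\Ric|^2 = S$ is exactly the definition given in the statement. Write $T_{ij} = R_{ij}/R$. Then
$$\ddt T_{ij} = \frac{1}{R}\,\ddt R_{ij} - \frac{R_{ij}}{R^2}\,\ddt R = \frac{1}{R}\De R_{ij} + \frac{Q_{ij}}{R} - \frac{R_{ij}}{R^2}\De R - \frac{2S\,R_{ij}}{R^2}.$$
Everything therefore reduces to expressing the combination $\frac{1}{R}\De R_{ij} - \frac{R_{ij}}{R^2}\De R$ in terms of $\De T_{ij}$ plus a first-order term; the reaction pieces $\frac{Q_{ij}}{R}$ and $-\frac{2S R_{ij}}{R^2}$ then assemble directly into the stated zeroth-order term.

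The key algebraic input is the product rule for the rough Laplacian: for a scalar $\phi$ and a $2$-tensor $U_{ij}$,
$$\De(\phi\,U_{ij}) = \phi\,\De U_{ij} + 2\,g^{pq}\gr_p\phi\,\gr_q U_{ij} + (\De\phi)\,U_{ij}.$$
I would apply this with $\phi = 1/R$ and $U_{ij} = R_{ij}$, using $\gr_p(1/R) = -R^{-2}\gr_p R$ and $\De(1/R) = -R^{-2}\De R + 2R^{-3}|\gr R|^2$, and solve for $\frac{1}{R}\De R_{ij}$:
$$\frac{1}{R}\De R_{ij} = \De T_{ij} + \frac{R_{ij}}{R^2}\De R + \frac{2}{R^2}\,g^{pq}\gr_p R\,\gr_q R_{ij} - \frac{2|\gr R|^2}{R^3}R_{ij}.$$
Substituting this into the expression for $\ddt T_{ij}$, the two terms $\frac{R_{ij}}{R^2}\De R$ cancel automatically. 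Finally I would recognize the surviving first-order piece as a gradient term in $T$: expanding $\gr_q T_{ij} = \frac{1}{R}\gr_q R_{ij} - \frac{R_{ij}}{R^2}\gr_q R$ gives
$$\frac{2}{R}\,g^{pq}\gr_p R\,\gr_q T_{ij} = \frac{2}{R^2}\,g^{pq}\gr_p R\,\gr_q R_{ij} - \frac{2|\gr R|^2}{R^3}R_{ij},$$
which is exactly the leftover combination, and the identity of the statement follows.

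The only genuine hurdle is bookkeeping: $1/R$ is differentiated once inside $\gr(1/R)$ and twice inside $\De(1/R)$, and one must track the $|\gr R|^2/R^3$ contributions carefully so that nothing is dropped or double-counted. Once the product rule for $\De$ is written out cleanly, the $\De R$ terms cancel of their own accord and the remaining first-order terms reassemble, with no appeal to any curvature identity beyond the evolution equations already quoted; the formula for $S$ enters only to tidy the reaction term.
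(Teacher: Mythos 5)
Your plan of attack is exactly the intended one --- the paper gives nothing beyond ``a straightforward computation yields the following result'' --- and your intermediate algebra is correct: the quotient rule in $t$, the product rule for the rough Laplacian applied with $\phi=1/R$, the automatic cancellation of the $\De R$ terms, and the reassembly of the surviving first-order terms into $\frac{2}{R}g^{pq}\gr_pR\,\gr_q(R_{ij}/R)$ all check out. The gap is in your closing claim that the reaction pieces ``assemble directly into the stated zeroth-order term.'' They do not. With the inputs you quote, namely $\ddt R_{ij}=\De R_{ij}+Q_{ij}$ from (\ref{e.Riccievol}) and $\ddt R=\De R+2S$ from (\ref{e.scalar}), your computation terminates in
$$
\ddt\left(\frac{R_{ij}}{R}\right)=\De\left(\frac{R_{ij}}{R}\right)
+\frac{2}{R}g^{pq}\gr_pR\,\gr_q\left(\frac{R_{ij}}{R}\right)
+\frac{RQ_{ij}-2SR_{ij}}{R^{2}},
$$
i.e.\ the zeroth-order term is $+\frac{Q_{ij}}{R}-\frac{2SR_{ij}}{R^{2}}$, whereas the lemma as printed has $-\frac{Q_{ij}}{R}-\frac{2SR_{ij}}{R^{2}}$. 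The sign of the $Q_{ij}$ piece is forced by the sign in (\ref{e.Riccievol}); it cannot come out negative from the identities you used, so asserting that ``the identity of the statement follows'' is precisely the step that fails.

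The discrepancy is a sign inconsistency within the chapter rather than a flaw in your algebra: the formula as printed is what one obtains from the convention $\ddt R_{ij}=\De R_{ij}-Q_{ij}$, which the text indeed writes at the beginning of the pinching section, while (\ref{e.Riccievol}) (the standard form, and the one you cite) carries $+Q_{jk}$. Note also that the eigenvalue expression used in the proof of the theorem that follows the lemma, which simplifies to $(\la+\mu+\nu)\left[\la(\mu+\nu)+(\mu-\nu)^{2}\right]-2\la(\la^{2}+\mu^{2}+\nu^{2})$ at a null eigenvector with $\la=\ve(\la+\mu+\nu)$, is exactly what the numerator $RQ_{ij}-2SR_{ij}$ produces, which confirms that your sign is the correct one and that the printed lemma has a typo. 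A complete write-up should therefore either prove the formula with the corrected numerator $RQ_{ij}-2SR_{ij}$, or state explicitly that it adopts the convention $\ddt R_{ij}=\De R_{ij}-Q_{ij}$ and rerun the same computation; what you must not do is silently identify $+\frac{Q_{ij}}{R}-\frac{2SR_{ij}}{R^{2}}$ with the printed $-\frac{RQ_{ij}+2SR_{ij}}{R^{2}}$.
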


Now, we are in position of proving a nice upper bound for $\Ric$:

\bethm
If $R \geq 0$ and $R_{ij} \geq \ve R g_{ij}$ for some $\ve> 0$ at time zero,
then $R_{ij}(t) \geq \ve R g_{ij}(t)$ for all $t$ such that a solution to
\RF exists.
\eethm

\begin{proof}
We shall make use of the maximum principle for tensors once more.
We have already seen that $R>0$ is preserved along the flow.
Apply Theorem \ref{t.maxtensor} for
\begin{eqnarray*}
M_{ij} &=& \frac{R_{ij}}{R} - \ve g^{ij}\\
u^k &=& \frac{2}{R} g^{kl}\gr_lR \\
N_{ij} &=& 2\ve R_{ij} - \left( \frac{RQ_{ij}+ 2 SR_{ij}}{R^2}\right) .
\end{eqnarray*}

The previous lemma ensures that this $M_{ij}$ evolves like in Theorem \ref{t.maxtensor},
but we still need to see what happens to $N_{ij}$ when $M_{ij}$ acquires a null-eigenvector.

At this point, it is convenient to recall we can always diagonalize the Ricci
curvature at a point, and thanks to the {\bf Uhlenbeck trick}\footnote{For more details, see \cite{Ben1}.},
we can find a moving
frame along the $3$-manifold that preserves the orthonormal frame. The
reader should be warned once more that this is only possible in dimension
$3$, when all compact manifolds are parallelizable.
Say that the Ricci tensor can be written as
$$
\Ric =
\begin{pmatrix}
\la & 0 & 0 \cr
0& \mu & 0 \cr
0 & 0 & \nu
\end{pmatrix},
$$
such that $\la(0) \geq \mu(0) \geq \nu(0)$ for the initial metric.

If $\Ric$ is written in diagonal form, so are $M_{ij}$ and $N{ij}$.
Without loss of generality, suppose that the
null eigenvector of $M_{ij}$ is the first eigenvector, corresponding
to the eigenvalue $\la$ of $\Ric$. Since $R = \la+ \mu+ \nu$,
we obtain the equation $\la = \ve(\la+\mu+\nu)$.
The corresponding entry in $R^2 N_{ij}$ is given by
$$
2\ve(\la+\mu+\nu)^2 - (\la+\mu+\nu)(2\la^2 - \mu^2 - \nu^2 - \la\mu - \la\nu + 2\mu\nu)
- 2 \la(\la^2+\mu^2+\nu^2).
$$
Using the relation $\la = \ve(\la+\mu+\nu)$ to get rid of $\ve$, the equation
above can be simplified to
$$
(\la+\mu+\nu)[\la(\mu + \nu) + (\mu - \nu)^2]- 2 \la(\la^2+\mu^2+\nu^2).
$$

Let's put together what we know. By taking traces, our hipothesis tells us that
$R  \geq 3 \ve R$, and since $R > 0$, $\ve\leq 1/3$.
In this case,
$(\mu + \nu) \geq 2\la$, showing that our choice of $N_{ij}$ satisfies the
null-eigenvector condition.

The result then follows from Theorem \ref{t.maxtensor}.
\end{proof}

\section{Pinching the Eigenvalues}

In this section, we will see that after a while, the
eigenvalues of the Ricci tensor approach each order.
Intuitively, the manifold starts to become more and more round.

Consider the expression
$$
S - \frac{1}{3}R^2 = (\la - \mu)^2 + (\mu - \nu)^2 + (\la - \nu)^2,
$$
which measures how the eigenvalues are far away from each other.

If indeed the $3$-manifold is becoming spherical, one would expect that
$S - \frac{1}{3}R^2$ shrinks along the flow.

\bethm
\label{t.pinching}
Let $(M,g_0)$ be a $3$-dimensional Riemannian manifold, such that
$g_0$ is a metric with $\Ric \geq 0$.
Let $g(t)$ be a solution to the (unnormalized) \RF equation (\ref{e.RF}) on
$0\leq t \leq T$,
with initial condition $g(0) = g_0$.

Then, there exist constants $\de > 0$ and $C$, that only depend on $g_0$,
such that
$$
S - \frac{1}{3}R^2 \leq R^{2 - \de}
$$
on $0\leq t \leq T$.
\eethm

\begin{proof}
Let $\ga = 2 - \de$, and recall the equations for the evolution of $\Ric$ and $R$:
\begin{eqnarray}
\ddt R_{ij} & = & \De R_{ij} - Q_{ij}\\
\ddt R  &=& \De R + 2S
\end{eqnarray}

We define
$
T  = g^{in}g^{jk}g^{lm}R_{ij}R_{kl}R_{mn}
$,
and writing $S$ and $T$ in terms of the eigenvalues of $\Ric$, we have
$$
S = \la^2 + \mu^2 + \nu^2  \hspace{.5cm} \text{and} \hspace{.5cm} T = \la^3 + \mu^3 + \nu^3.
$$
Finally, let
$$
C = \frac{1}{2} g^{ik} g^{jk} Q_{ij} R_{kl} = \frac{1}{2}\left[ R^3 - 5RS + 6T\right].
$$

A simple computation gives the following result:

\begin{lemma}
The expression $S$ evolves by
$$
\ddt S = \De S - 2 |\De_iR_{jk}|^2 + 4(T - C).
$$
\end{lemma}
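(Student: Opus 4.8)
The plan is to compute the evolution of $S = g^{in}g^{jk}R_{ij}R_{kn}$ directly from the evolution equations for the metric and the Ricci tensor, already recorded in this chapter. First I would write
$$
\ddt S = \left(\ddt g^{in}\right) g^{jk}R_{ij}R_{kn} \cdot 2 + 2 g^{in}g^{jk}\left(\ddt R_{ij}\right)R_{kn},
$$
using the symmetry of $S$ under swapping the two $\Ric$ factors. Since $\ddt g^{in} = 2 g^{ip}g^{nq}R_{pq}$ (from $\ddt g_{ij} = -2R_{ij}$), the metric-variation terms contribute $+4\,g^{ip}g^{nq}g^{jk}R_{pq}R_{ij}R_{kn} = 4T$. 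Then, inserting $\ddt R_{ij} = \De R_{ij} - Q_{ij}$ from (\ref{e.Riccievol}), the second group of terms gives $2 g^{in}g^{jk}(\De R_{ij})R_{kn} - 2 g^{in}g^{jk}Q_{ij}R_{kn}$. The last piece is $-2 \cdot \tfrac12 g^{ik}g^{jl}Q_{ij}R_{kl} \cdot 2 = -4C$ by the very definition of $C$ given just above the lemma (being careful that $C$ is defined with a factor $\tfrac12$, so that $g^{in}g^{jk}Q_{ij}R_{kn} = 2C$).

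The remaining task is to turn $2 g^{in}g^{jk}(\De R_{ij})R_{kn}$ into $\De S - 2|\gr_i R_{jk}|^2$. For this I would use the product rule for the Laplacian applied to the scalar $S$: since the metric contractions are parallel, $\De S = \De\!\left(g^{in}g^{jk}R_{ij}R_{kn}\right) = 2 g^{in}g^{jk}(\De R_{ij})R_{kn} + 2 g^{in}g^{jk}g^{pq}(\gr_p R_{ij})(\gr_q R_{kn})$. The second term on the right is exactly $2|\gr_i R_{jk}|^2$, so rearranging gives $2 g^{in}g^{jk}(\De R_{ij})R_{kn} = \De S - 2|\gr_i R_{jk}|^2$. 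Assembling the three contributions yields
$$
\ddt S = \De S - 2|\gr_i R_{jk}|^2 + 4T - 4C = \De S - 2|\gr_i R_{jk}|^2 + 4(T - C),
$$
which is the claimed formula. (Here I am reading the statement's $|\De_i R_{jk}|^2$ as the norm of the full covariant derivative $\gr R$, i.e. $g^{pq}g^{ia}g^{jb}g^{kc}\gr_p R_{ij}\gr_q R_{ab}$ in $S^2$ notation.)

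I expect the main obstacle to be purely bookkeeping rather than conceptual: one has to be scrupulous about the index contractions and about the factor-of-$\tfrac12$ conventions in the definitions of $Q_{ij}$, $S$, $T$ and especially $C$, since a misplaced factor of $2$ is the only thing that can go wrong. It is also worth double-checking, as a sanity check, that the algebraic identity $C = \tfrac12(R^3 - 5RS + 6T)$ is consistent with $Q_{ij}$ from (\ref{e.Riccievol}) by plugging in the diagonal form $\Ric = \mathrm{diag}(\la,\mu,\nu)$ — this confirms that $4(T-C)$ is genuinely the correct reaction term and that no Laplacian-of-curvature terms have been dropped in rewriting the evolution of $\Ric$ in $3$ dimensions. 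Once the contractions are tracked carefully there is no analytic difficulty, so a "simple computation" is indeed an accurate description.
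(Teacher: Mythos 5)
The paper offers no proof of this lemma beyond the words ``a simple computation,'' and your computation has exactly the right skeleton: the $4T$ coming from $\ddt g^{in}=2g^{ip}g^{nq}R_{pq}$ applied to both inverse metrics in $S$, the identity $2R^{ij}\De R_{ij}=\De S-2|\gr_i R_{jk}|^2$ (and yes, $\De_i$ in the statement is a typo for $\gr_i$), and the identification of the remaining reaction term with $-4C$. So the route is the intended one and the final formula is correct.

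However, the one place you yourself flagged as dangerous is exactly where two sign errors occur, and they happen to cancel. First, the evolution of $\Ric$ consistent with the displayed formula (\ref{e.Riccievol}) --- and with the general three--dimensional computation via the Weyl-free decomposition of $R_{ijkl}$ --- is $\ddt R_{jk}=\De R_{jk}+Q_{jk}$ with $Q_{jk}=3RR_{jk}-6g^{pq}R_{jp}R_{qk}+(2|\Ric|^2-R^2)g_{jk}$; the ``$\ddt R_{ij}=\De R_{ij}-Q_{ij}$'' appearing in the recap at the start of the proof of Theorem \ref{t.pinching} is a sign typo of the paper, and you adopted the wrong one. Second, your assertion that $g^{in}g^{jk}Q_{ij}R_{kn}=2C$ is false for the definition of $C$ that the rest of the argument uses, namely $C=\tfrac12(R^3-5RS+6T)$: contracting the displayed $Q_{jk}$ with $R^{jk}$ gives $3RS-6T+(2S-R^2)R=5RS-6T-R^3=-2C$, not $+2C$. (Equivalently, in the diagonal form the first entry of $Q$ is $2ab+2ac-4bc$ in terms of sectional curvatures, and summing $\rho_iQ_i$ produces $-2C$.) With the correct signs the reaction contribution is $2R^{ij}Q_{ij}=-4C$, which is what the lemma needs; with your two conventions it would come out as $+4C$ were your contraction identity corrected, or as $-4C$ only because the two flips compensate. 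Had you actually carried out the diagonal-form sanity check you proposed at the end, both discrepancies would have surfaced; I recommend doing it, since the same sign of $Q$ is needed again for the null-eigenvector verification in the corollary following Theorem \ref{t.maxtensor}.
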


Using this lemma, we can state the evolution for the two terms
$\frac{S}{R^\ga}$ and $R^{2-\ga}$:
\begin{lemma}
\label{l.chato1}
If $R > 0$,
\begin{eqnarray*}
\ddt \left( \frac{S}{R^\ga}\right) & = &
\De  \left( \frac{S}{R^\ga}\right) + \frac{2(\ga - 1)}{R}
g^{pq}\gr_pR \gr_q  \left( \frac{S}{R^\ga}\right) \\
&& - \frac{2}{R^{\ga + 2}}|R \gr_iR_{jk} - \gr_i R R_{jk}|^2 \\
&&
- \frac{(2-\ga)(\ga - 1)}{R^{\ga + 2}} S |\gr_i R|^2
+ \frac{4R(T-C) - 2\ga S^2}{R^{\ga + 1}}
\end{eqnarray*}
\end{lemma}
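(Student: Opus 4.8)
The plan is to derive this identity by a direct manipulation of the two evolution equations already in hand, $\ddt R = \De R + 2S$ and $\ddt S = \De S - 2|\gr_i R_{jk}|^2 + 4(T-C)$, combined with the elementary product and quotient rules for the Laplacian. First I would apply the quotient rule in time,
$$
\ddt\left(\frac{S}{R^\ga}\right) = \frac{1}{R^\ga}\ddt S - \ga\,\frac{S}{R^{\ga+1}}\ddt R,
$$
and substitute the two evolution equations, obtaining an expression whose terms are $R^{-\ga}\De S$, $-\ga S R^{-\ga-1}\De R$, $-2R^{-\ga}|\gr_i R_{jk}|^2$, $-2\ga S^2 R^{-\ga-1}$ and $4R^{-\ga}(T-C)$. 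The two zeroth-order terms already assemble into the last summand $\frac{4R(T-C) - 2\ga S^2}{R^{\ga+1}}$ of the claim, so the whole content of the lemma lies in reorganizing the first three.

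For the second-order part I would use $\De(SR^{-\ga}) = R^{-\ga}\De S - 2\ga R^{-\ga-1}g^{pq}\gr_p S\,\gr_q R - \ga S R^{-\ga-1}\De R + \ga(\ga+1)S R^{-\ga-2}|\gr R|^2$ to solve for $R^{-\ga}\De S - \ga S R^{-\ga-1}\De R$ in terms of $\De(S/R^\ga)$, a $g^{pq}\gr_p S\,\gr_q R$ term, and an $S|\gr R|^2$ term. Every gradient of $S$ that appears anywhere is then eliminated in favour of a gradient of the quotient via $\gr_q(SR^{-\ga}) = R^{-\ga}\gr_q S - \ga S R^{-\ga-1}\gr_q R$. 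The key algebraic observation is that the curvature-derivative term should be completed into a square: using $|R_{jk}|^2 = S$ and $R^{jk}\gr_i R_{jk} = \tfrac12\gr_i S$ one has
$$
|R\,\gr_i R_{jk} - (\gr_i R)R_{jk}|^2 = R^2|\gr_i R_{jk}|^2 - R\, g^{pq}\gr_p R\,\gr_q S + S|\gr R|^2,
$$
so that $-2R^{-\ga}|\gr_i R_{jk}|^2$ equals $-\tfrac{2}{R^{\ga+2}}|R\gr_i R_{jk} - (\gr_i R)R_{jk}|^2$, minus $\tfrac{2}{R^{\ga+1}}g^{pq}\gr_p R\,\gr_q S$, plus $\tfrac{2S}{R^{\ga+2}}|\gr R|^2$. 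Substituting this and again trading $\gr S$ for $\gr(S/R^\ga)$ produces exactly the third term of the stated formula.

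It remains to collect coefficients, which is the only place where care is required. The two $g^{pq}\gr_p R\,\gr_q(S/R^\ga)$ contributions — one with coefficient $2\ga/R$ from the Laplacian rearrangement, one with coefficient $-2/R$ from the completed square — combine to $\tfrac{2(\ga-1)}{R}$; the three $S|\gr R|^2/R^{\ga+2}$ contributions combine with total coefficient $\ga(\ga-1) - 2\ga + 2 = (\ga-1)(\ga-2) = -(2-\ga)(\ga-1)$, which is the fourth term. I do not expect any conceptual obstacle; this is a bookkeeping computation, and the one thing one has to see in advance is that the right quantity to complete the square with is $R\gr_i R_{jk} - (\gr_i R)R_{jk}$, which up to a factor $R^{\ga+1}$ is just the numerator one gets from differentiating $R_{jk}/R$ — precisely the tensor that measures whether the eigenvalue ratios are being pinched.
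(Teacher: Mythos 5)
Your computation is correct and is exactly the direct calculation that the paper leaves to the reader (and that appears in Hamilton's original argument): quotient rule in $t$, the product rule for $\De(SR^{-\ga})$, completion of the square on $R\gr_iR_{jk}-(\gr_iR)R_{jk}$ using $R^{jk}\gr_iR_{jk}=\tfrac12\gr_iS$ and $|R_{jk}|^2=S$, and conversion of $\gr S$ into $\gr(S/R^\ga)$. The coefficient bookkeeping checks out: the gradient terms give $2\ga-2=2(\ga-1)$ and the $S|\gr R|^2$ terms give $\ga^2-3\ga+2=-(2-\ga)(\ga-1)$, as claimed.
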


\begin{lemma}
\label{l.chato2}
If  $R> 0$, then for any $\ga$,
\begin{eqnarray*}
\ddt R^{2 - \ga} & = &
\De (R^{2 - \ga})  + \frac{2(\ga - 1)}{R} g^{pq} \gr_p R \gr_q(R^{2 - \ga})\\
&& -
\frac{(2-\ga)(\ga - 1)}{R^{\ga + 2}}R^2 |\gr_i R|^2 + 2(2 - \ga)R^{1-\ga}S.
\end{eqnarray*}
\end{lemma}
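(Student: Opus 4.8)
The plan is to derive this identity by a straightforward application of the chain rule to $R^{2-\ga}$, feeding in the evolution equation $\ddt R = \De R + 2S$ that we already obtained (this is (\ref{e.scalar}) written with $S=|\Ric|^2$), and then reorganizing the resulting first-order terms so that the formula has exactly the same shape as Lemma~\ref{l.chato1}. No curvature identities beyond those already in hand are required, and the computation is valid for any real $\ga$ (we only need $R>0$ so that the powers of $R$ make sense); the entire content is the bookkeeping of the $|\gr R|^2$ terms.

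First I would set $u = R^{2-\ga}$ and compute, using $\ddt R = \De R + 2S$,
$$\ddt u = (2-\ga)R^{1-\ga}\left(\De R + 2S\right).$$
Next I would eliminate $R^{1-\ga}\De R$ in favor of the Laplacian of $u$ itself: since
$$\De(R^{2-\ga}) = (2-\ga)R^{1-\ga}\De R + (2-\ga)(1-\ga)R^{-\ga}|\gr R|^2,$$
solving for $(2-\ga)R^{1-\ga}\De R$ and substituting gives
$$\ddt R^{2-\ga} = \De(R^{2-\ga}) + (2-\ga)(\ga-1)R^{-\ga}|\gr R|^2 + 2(2-\ga)R^{1-\ga}S,$$
which is already the asserted identity, written in its most economical form.

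The only real choice in the argument is the final cosmetic step: I would split the coefficient $(2-\ga)(\ga-1)$ of $R^{-\ga}|\gr R|^2$ as $2(\ga-1)(2-\ga) - (\ga-1)(2-\ga)$, recognize the first piece as the transport term $\tfrac{2(\ga-1)}{R}g^{pq}\gr_p R\,\gr_q(R^{2-\ga})$ (using $\gr_q(R^{2-\ga}) = (2-\ga)R^{1-\ga}\gr_q R$), and rewrite the second piece as $-\tfrac{(2-\ga)(\ga-1)}{R^{\ga+2}}R^2|\gr_i R|^2$, matching the exponents used for $S/R^{\ga}$. The point of presenting it this way, rather than leaving the clean line above, is that then $R^{2-\ga}$ and $S/R^{\ga}$ from Lemma~\ref{l.chato1} carry \emph{identical} drift terms $\tfrac{2(\ga-1)}{R}g^{pq}\gr_pR\,\gr_q(\,\cdot\,)$, which is exactly what permits a scalar maximum principle comparison of $S/R^{\ga}$ with $CR^{2-\ga}$ in the proof of Theorem~\ref{t.pinching}. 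There is no genuine obstacle; the one place to be careful is keeping the signs of $(1-\ga)$ versus $(\ga-1)$ straight when moving the $|\gr R|^2$ term out of the Laplacian computation, and recalling $R^{-\ga}=R^2/R^{\ga+2}$ so the display agrees verbatim with Lemma~\ref{l.chato1}.
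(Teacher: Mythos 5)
Your proof is correct and is precisely the direct computation the paper intends (it explicitly leaves Lemmas \ref{l.chato1} and \ref{l.chato2} to the reader as such): apply the chain rule to $R^{2-\ga}$ with $\ddt R = \De R + 2S$, trade $R^{1-\ga}\De R$ for $\De(R^{2-\ga})$ at the cost of a $|\gr R|^2$ term, and split that term to exhibit the same drift $\tfrac{2(\ga-1)}{R}g^{pq}\gr_pR\,\gr_q(\cdot)$ as in Lemma~\ref{l.chato1}. The coefficient bookkeeping $(2-\ga)(\ga-1) = 2(\ga-1)(2-\ga) - (\ga-1)(2-\ga)$ checks out, so the display matches the statement verbatim.
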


The proof of Lemmas \ref{l.chato1} and \ref{l.chato2} amounts to computing directly the
terms involved and it is left to the reader.

Now, we want to use the maximum principle for the function
$$
f = \frac{S}{R^\ga} - \frac{1}{3}R^{2-\ga}.
$$
It is a direct consequence of the two lemmas above to see that the
evolution equation satisfied by $f$ is given by
\begin{eqnarray*}
\ddt f &=&
\De f + \frac{2(\ga - 1)}{R} g^{pq} \gr_p R \gr_q f -
\frac{2}{R^{\ga + 2}}|R \gr_iR_{jk} - \gr_i R R_{jk}|^2 \\
&& - \frac{(2-\ga)(\ga - 1)}{R^{\ga + 2}}\left(S - \frac{1}{3}R^2\right) |\gr_i R|^2
\\
&&
\frac{2}{R^{\ga + 1}}\left[ (2-\ga S )\left(S - \frac{1}{3}R^2\right) - 2P\right],
\end{eqnarray*}
where $P = S^2 + R(C - T)$.

Clearly, $P$ is a symmetric polynomial of degree $4$ in the eigenvalues $\la$, $\mu$ and $\nu$.
But in fact, we can say more:

\begin{lemma}
In terms of the eigenvalues of $\Ric$, $P$
 is given by
$$
P = \la^2 (\la - \mu)(\la - \nu)
+  \mu^2 (\mu - \la)(\mu - \nu)
+ \nu^2 (\nu - \la)(\nu - \mu).
$$
\end{lemma}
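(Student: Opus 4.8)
The plan is to prove the identity by a direct symmetric-function computation. Work in the orthonormal frame diagonalizing $\Ric$ (available by the Uhlenbeck trick, as above), so that both $P$ and the claimed expression become polynomials of degree $4$ in the three eigenvalues $\la$, $\mu$, $\nu$, and it then suffices to check that these polynomials agree.

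First I would eliminate $C$. Using the formula $C = \tfrac12\left[R^3 - 5RS + 6T\right]$ recorded above, the definition $P = S^2 + R(C - T)$ becomes
$$
P = S^2 + \tfrac12 R^4 - \tfrac52 R^2 S + 2RT .
$$
Now substitute $R = \la+\mu+\nu$, $S = \la^2+\mu^2+\nu^2$, $T = \la^3+\mu^3+\nu^3$. Since the result is a symmetric polynomial of degree $4$, it is a combination of the four monomial symmetric polynomials attached to the partitions $[4]$, $[3,1]$, $[2,2]$, $[2,1,1]$, so it is enough to match those four coefficients. Expanding $S^2$, $R^4$, $R^2 S$ and $RT$ monomial by monomial, one finds that $P$ has coefficient $1$ on $[4]$, $-1$ on $[3,1]$, $0$ on $[2,2]$ and $1$ on $[2,1,1]$.

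On the other side, expand $\la^2(\la-\mu)(\la-\nu) = \la^4 - \la^3\mu - \la^3\nu + \la^2\mu\nu$ and sum cyclically. The $\la^4$-terms give coefficient $1$ on $[4]$; the $\la^3\mu$-terms collect to coefficient $-1$ on $[3,1]$; no $[2,2]$-monomials appear; and $\sum_{\mathrm{cyc}}\la^2\mu\nu = \la\mu\nu(\la+\mu+\nu)$ contributes the $[2,1,1]$-monomials with coefficient $1$. These match exactly the coefficients found for $P$, so the two expressions coincide.

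There is no real obstacle here: the argument is a finite, mechanical check, and the only thing to watch is the arithmetic bookkeeping (the multinomial coefficients in $R^4$ and $R^2S$, and the halves and fifths). Organizing the verification around the four monomial-symmetric coefficients, instead of fully expanding both sides, keeps this under control. One could equivalently run the same computation through Newton's identities: the right-hand side equals $2p_4 - e_1 p_3 + e_1 e_3$ in the power sums $p_k$, and substituting $p_4 = e_1 p_3 - e_2 p_2 + e_3 p_1$ together with $e_2 = \tfrac12(e_1^2 - p_2)$ and $e_3 = \tfrac16(e_1^3 - 3e_1 p_2 + 2 p_3)$ collapses it to $S^2 + \tfrac12 R^4 - \tfrac52 R^2 S + 2RT$, the expression obtained for $P$ in the first step.
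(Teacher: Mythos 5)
Your computation is correct: eliminating $C$ gives $P = S^2 + \tfrac12 R^4 - \tfrac52 R^2S + 2RT$, and matching the monomial-symmetric coefficients ($1$ on $[4]$, $-1$ on $[3,1]$, $0$ on $[2,2]$, $1$ on $[2,1,1]$) against the cyclic sum $\sum \la^2(\la-\mu)(\la-\nu) = m_{[4]} - m_{[3,1]} + \la\mu\nu(\la+\mu+\nu)$ checks out, as does your Newton's-identity cross-check. The paper states this lemma without proof, and your direct symmetric-function verification is exactly the intended "straightforward computation," so nothing further is needed.
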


Now, we can state the following result.
\begin{lemma}
If $R > 0$ and $R_{ij} \geq \ve R g_{ij}$, then
$$
P \geq \ve^2 S \left(S - \frac{1}{3}R^2\right).
$$
\end{lemma}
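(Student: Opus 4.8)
The plan is to evaluate everything at the point in question, diagonalize $\Ric$ there, and reduce the assertion to an elementary symmetric inequality in the three eigenvalues. Relabel so that $\la \geq \mu \geq \nu$. Since we are assuming $R > 0$ together with $R_{ij} \geq \ve R g_{ij}$, each eigenvalue satisfies $\nu \geq \ve R > 0$, so $\la,\mu,\nu$ are all strictly positive; this positivity is the only way the hypotheses enter, and it is exactly what is needed (for Ricci curvature of mixed sign $P$ can be negative).

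The main step is a Schur-type regrouping of $P$. Combining the first two summands,
\[
P = (\la-\mu)\bigl[\la^2(\la-\nu) - \mu^2(\mu-\nu)\bigr] + \nu^2(\la-\nu)(\mu-\nu).
\]
Since $\la \geq \mu \geq 0$ and $\la-\nu \geq 0$ we have $\la^2(\la-\nu) \geq \mu^2(\la-\nu)$, whence
\[
\la^2(\la-\nu) - \mu^2(\mu-\nu) \geq \mu^2\bigl[(\la-\nu)-(\mu-\nu)\bigr] = \mu^2(\la-\mu),
\]
and therefore, using $\la-\mu \geq 0$,
\[
P \;\geq\; \mu^2(\la-\mu)^2 + \nu^2(\la-\nu)(\mu-\nu).
\]
Writing $a = \la-\mu \geq 0$ and $b = \mu-\nu \geq 0$, so that $\la-\nu = a+b$, this says $P \geq \mu^2 a^2 + \nu^2 b(a+b)$.

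Now I would feed in the pinching hypothesis in the form $\mu^2 \geq \ve^2 R^2$ and $\nu^2 \geq \ve^2 R^2$, which gives
\[
P \;\geq\; \ve^2 R^2\,(a^2 + ab + b^2).
\]
Next, $2(a^2+ab+b^2) = a^2+b^2+(a+b)^2 = (\la-\mu)^2+(\mu-\nu)^2+(\la-\nu)^2 = 3\bigl(S-\tfrac13 R^2\bigr)$, so $a^2+ab+b^2 = \tfrac32\bigl(S-\tfrac13 R^2\bigr)$, a nonnegative quantity. Finally, since $\la,\mu,\nu > 0$ we have $R^2 = S + 2(\la\mu+\mu\nu+\nu\la) \geq S$, hence $\tfrac32 R^2 \geq S$; multiplying this inequality by $\ve^2\bigl(S-\tfrac13R^2\bigr) \geq 0$ yields
\[
P \;\geq\; \ve^2 R^2\cdot\tfrac32\bigl(S-\tfrac13 R^2\bigr) \;\geq\; \ve^2 S\bigl(S-\tfrac13 R^2\bigr),
\]
which is the claim.

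The only nonroutine point is spotting the regrouping in the second paragraph; after that every inequality is forced and essentially one line. A head-on expansion of $P - \ve^2 S(S-\tfrac13R^2)$ into monomials in $\la,\mu,\nu$ would also work in principle, but it is far messier and obscures the role played by positivity of $\Ric$ (as opposed to $R>0$ alone).
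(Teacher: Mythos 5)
Your proof is correct and follows essentially the same route as the paper: diagonalize $\Ric$, regroup $P$ so that the negative middle term is absorbed and $P$ is bounded below by squared eigenvalue gaps weighted by $\ve^2R^2$, then compare with $S-\tfrac13R^2$. The only cosmetic differences are that the paper normalizes to $S=1$ and uses the slightly different regrouping $P\geq\la^2(\la-\mu)^2+\nu^2(\mu-\nu)^2$, whereas you work unnormalized and close with $\tfrac32R^2\geq S$; your version has the added merit of keeping the constants straight, which the paper's displayed chain does not.
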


To see this, first we simplify a bit: since both sides are
homogeneous of degree $4$, it suffices to check the statement
on $S = (\la^2 + \mu^2 + \nu^2)=1$.

Assume $\la \geq \mu \geq \nu > 0$. Since
$(\la + \mu + \nu)^ 2 \geq 1$,
$\nu \geq \ve(\la + \mu + \nu) \geq \ve$, because of the second condition
in the hypothesis of the lemma.
Furthermore, notice that
$$
P\geq \la^2 (\la - \mu)^2 + \nu^2(\mu - \nu)^2,
$$
which implies
$$
P \geq \ve^2[(\la - \mu)^2 + (\mu - \nu)^2].
$$

On the other hand, since
$$
(\la - \nu)^2 \leq 2 [(\la - \mu)^2 + (\mu - \nu)^2],
$$
we see that
$$
\left(S - \frac{1}{3}R^2\right)  = [(\la - \mu)^2 +(\la - \mu)^2 +(\mu - \nu)^2]
\leq [(\la - \mu)^2 + (\mu - \nu)^2] \leq P,
$$
which completes the proof of the lemma.

Finally, we can state the proposition which completes, via the maximum principle,
the proof of the theorem.
\begin{prop}
If $\de \leq 2\ve^2$, then
$$
\ddt f \leq \De f + u^k \gr_k f,
$$
for $u^k = \frac{2(\ga - 1)}{R}g^{kl}\gr_l R$.
\end{prop}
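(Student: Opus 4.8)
The plan is to start from the evolution equation for $f = \dfrac{S}{R^\ga} - \dfrac13 R^{2-\ga}$ (with $\ga = 2-\de$) that was assembled by combining Lemmas~\ref{l.chato1} and~\ref{l.chato2}, namely
$$
\ddt f = \De f + u^k\gr_k f - \frac{2}{R^{\ga+2}}\bigl|R\gr_iR_{jk} - \gr_iR\,R_{jk}\bigr|^2 - \frac{(2-\ga)(\ga-1)}{R^{\ga+2}}\Bigl(S - \tfrac13 R^2\Bigr)|\gr_iR|^2 + \frac{2}{R^{\ga+1}}\Bigl[(2-\ga)S\Bigl(S - \tfrac13 R^2\Bigr) - 2P\Bigr],
$$
and then to show that each of the three terms following $u^k\gr_k f$ is $\leq 0$. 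Once that is done the asserted differential inequality is immediate. Throughout I use that $R>0$ is preserved along the flow and that, since $\de \leq 2\ve^2 \leq 2/9$ (using $\ve \leq 1/3$), we have $0 < \ga - 1 = 1-\de$ and $2-\ga = \de > 0$.

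The first term is minus a square divided by $R^{\ga+2}>0$, hence nonpositive with no further work. The second term has the factor $(2-\ga)(\ga-1) = \de(1-\de) > 0$, while $S - \tfrac13 R^2 = (\la-\mu)^2 + (\mu-\nu)^2 + (\la-\nu)^2 \geq 0$, $|\gr_iR|^2 \geq 0$, and $R^{\ga+2}>0$; so it too is $\leq 0$.

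The substantive step is the third term, for which I want $(2-\ga)S\bigl(S - \tfrac13 R^2\bigr) \leq 2P$. Here I would invoke the earlier theorem guaranteeing that the pinching $R_{ij} \geq \ve R g_{ij}$ is preserved under the \RF (available because it holds at $t=0$, a consequence of $\Ric > 0$ on the compact $M$), so that the preceding lemma $P \geq \ve^2 S\bigl(S - \tfrac13 R^2\bigr)$ applies. Then, using $\de \leq 2\ve^2$ and $S\bigl(S - \tfrac13 R^2\bigr) \geq 0$,
$$
2P \;\geq\; 2\ve^2 S\Bigl(S - \tfrac13 R^2\Bigr) \;\geq\; \de\, S\Bigl(S - \tfrac13 R^2\Bigr) \;=\; (2-\ga)\,S\Bigl(S - \tfrac13 R^2\Bigr),
$$
so the bracket is $\leq 0$, and dividing by $R^{\ga+1}>0$ the whole term is $\leq 0$. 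Adding the three nonpositive contributions gives $\ddt f \leq \De f + u^k\gr_k f$, proving the proposition; feeding this into the scalar maximum principle (comparing $f$ with the trivial ODE solution $\phi \equiv \max_M f(\cdot,0)$) then yields Theorem~\ref{t.pinching} after absorbing the constant into $C$.

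There is no deep difficulty here — the argument is pure sign-chasing — but the one place to be careful is to make sure the pinching estimate $R_{ij} \geq \ve R g_{ij}$ is genuinely in force before using the lemma $P \geq \ve^2 S(S - \tfrac13 R^2)$, and to verify $\ga - 1 > 0$ so that the drift term keeps the correct form rather than flipping sign.
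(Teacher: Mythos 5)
Your proof is correct and is precisely the argument the paper intends when it says the proposition ``follows directly from our previous estimates'': you combine Lemmas~\ref{l.chato1} and~\ref{l.chato2}, observe that the gradient-square term and the $(2-\ga)(\ga-1)$ term are manifestly nonpositive since $0<\de<1$ and $S-\tfrac13R^2\ge 0$, and dispose of the reaction term via $P\ge\ve^2S(S-\tfrac13R^2)$ together with $\de\le 2\ve^2$. Your care in checking that the pinching $R_{ij}\ge\ve Rg_{ij}$ persists (so the lemma on $P$ applies at all times) and that $\ga-1>0$ is exactly the right bookkeeping; you have also correctly read the paper's ``$(2-\ga S)$'' as the typo for $(2-\ga)S$.
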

This proposition follows directly from our previous estimates.

Now we can finish the proof of Theorem \ref{t.pinching}: choose
a constant $C$ such that $f \leq C$ at time zero.
Then, the maximum principle says that this bound is preserved for as
long as $f$ exists. In other words,
$$
\left(S - \frac{1}{3}R^2\right)  \leq C R^{\ga},
$$
as claimed.
\end{proof}

This theorem tells us that as long as $\Ric$ has a nice lower bound for time
zero, then the \RF will make the manifold rounder, by collapsing the eigenvalues together.
\bigskip

For restrictions on time, we will not proceed with the higher-order estimates
for the Ricci tensor $\Ric$. For a very good exposition, we refer to
\cite{Ben1}, or even the original paper by Hamilton \cite{H2}.

\chapter{Introduction to \kahler Geometry}
\label{KahlerGeom}

In order to study the \kahler version of the Ricci Flow, that evolves metrics to
the unique  \kahler-Einstein metric in each \kahler class (whose interest goes far beyond
Mathematics),
we need to review some basic definitions and facts about complex manifolds.

\section{\kahler manifolds}

In this section, we define the basic objects we will be dealing with,
while studying the \KRF.

\begin{defin}
Let $M$ be an $n$-dimensional manifold. We say that
$M$ is a {\bf complex manifold} if it admits a system of
{\bf holomorphic coordinate charts}, that is, charts such that the
transition functions are biholomorphisms.
\end{defin}

 At each point $p$ in a complex manifold $M$, we can define a map
 $$
 J: T_p M \rightarrow T_pM, \hspace{1cm} J = d(z^{-1}\circ \sqrt{-1}\circ z),
 $$
where $z$ is a holomorphic coordinate defined on a neighborhood of $p$.

It is simple to check that $J^2 = -Id$, and we call $J$ an {\bf almost-complex structure}.
In fact, the definition of an almost-complex structure is more general: it
is simply a map $J \in End(TM)$ such that $J^2 = -Id$.
We see that a complex structure induced an almost-complex structure, but the converse is
not true. We say that the almost-complex structure $J$ is {\bf integrable} if there
exists an underlying complex structure which generates it.

The {\bf Newlander-Nirenberg Theorem} states that an almost-complex structure is
integrable if the {\bf Nijenhuis tensor}
$$
N(X, Y) = [JX, JY] - J[JX, JY]- J[X, JY] - [X,Y]
$$
vanishes identically.

Let $M$ be a compact, complex manifold of complex dimension
$n$, and consider $g$,
a hermitian metric defined on $M$.
Note that $g$ is a complex-valued sesquilinear form acting on
$TM \times TM$, and can therefore be written as
$$
g = S - 2\sqrt{-1} \om_g,
$$
where $S$ and $-\om$ are real bilinear forms.

If $(z_1, \dots, z_n)$ are local coordinates around a point $p\in M$,
we can write the metric $g$ as $\sum g_{i\jbar}dz^i \otimes d\zbar^j$.
Then, it is easy to see that in these coordinates
$$
\om_g = \frac{\sqrt{-1}}{2} \sum_{i, j = 1}^n g_{i\jbar}dz^i \wedge d\zbar^j.
$$
The form $\om_g$ is a real $2-$form of type $(1,1)$, and is called
the {\it fundamental form} of the metric $g$.

\begin{defin}
We say that a hermitian metric on a complex manifold is
{\bf \kahler} if its associated fundamental form $\om_g$
is closed, {\em i.e.}, $d \om_g = 0$.
A complex manifold equipped with a \kahler metric is
called a {\bf \kahler manifold}.
\end{defin}

Another characterization of a \kahler manifold $M$ is a manifold
equipped with an almost-complex structure $J$ and a metric $g$ such that
$g$ is $J$-invariant ({\em i.e.}, $g(JX,JY) = g(X, Y)$) and
$J$ is parallel with respect to the Levi-Civita connection of $g$.

The reader can check that the conditions on $J$ and $g$
of a \kahler manifold implies that the Nijenhuis tensor vanishes, so
any \kahler manifold is necessarily complex.

On a \kahler manifold \footnote{In fact, this whole paragraph holds for an
almost-complex manifold.} $M$, the complexified tangent bundle $TM_\CC = TM \otimes \CC$
has a natural splitting. If $p$ is a point in $M$, the extension of map $J$ to
$T_pM_\CC$ (as a complex-linear map) has $\sqrt{-1}$ and $-\sqrt{-1}$ as
eigenvalues, and we define the associated eigenspaces as $T^{1,0}_pM$
and $T^{0,1}_pM$. Hence, we have a decomposition
$$
TM_\CC = T^{1, 0} M \otimes T^{0,1}M.
$$

\bede
A differential $(p+q)-$form $\om$ is  {\bf of type $(p,q)$} if it is a
section of
$$
\La^{p,q}M = (\La^p T^{1, 0}) \wedge (\La^q T^{0, 1}).
$$
\eede

Let $z^i = x^i + \sqrt{-1} y^i$, $i = 1, \cdots, n$
be complex coordinates. Then, we define
$$
dz^i = dx^i + \ii dy^i  \hspace{0.5cm} \text{and} \hspace{0.5cm} d\zbar^j = dx^j - \ii dy^j,
$$
and $\frac{\partial}{\partial z^i}$, $\frac{\partial}{\partial \zbar^j}$, the dual of
$dz^i$ and $d\zbar^j$. From this definition, it is easy to see that
\begin{eqnarray*}
T^{1,0}M &=& \text{spam}\left\{ \frac{\partial}{\partial z^i} \right\}^n_{i=1} \\
T^{0,1}M &=& \text{spam} \left\{ \frac{\partial}{\partial \zbar^i}  \right\}^n_{i=1}
\end{eqnarray*}

The exterior differentiation
$$d: \La^{p,q}M \rightarrow \La^{p+1,q}M \oplus \La^{p,q+1}M $$
also splits according to this decomposition:
\begin{eqnarray*}
\partial:&& \La^{p,q}M \rightarrow \La^{p+1,q}M \\
\bar{\partial}:&& \La^{p,q}M \rightarrow \La^{p,q+1}M.
\end{eqnarray*}

Finally, if $M$ is \kahler, we recall that  the \kahler (closed) form
$\om_g$ is given, in local coordinates, by
$$
\frac{\sqrt{-1}}{2} \sum_{i, j = 1}^n g_{i\jbar}dz^i \wedge d\zbar^j,
$$
where $g_{i\jbar} = g(\frac{\partial}{\partial z^i}, \frac{\partial}{\partial \zbar^j})$.
\footnote{Here, we are abusing notation and writing $g$ for both the Riemannian metric
and its complex extension to $TM_\CC$}.

A last remark we make is that, due to $J$-invariance, the coefficients of type
$g_{ij}$ and $g_{\jbar\lbar}$ vanish identically. This cancelation phenomenon also
happens for some of the coefficients of the Riemann curvature tensor. This is the subject
of the next section.

\section{Curvature and its contractions on a \kahler manifold}

Many of the results in this section will be presented without a proof,
mainly because they involve direct calculations using the definition.

The {\bf Complex Christoffel symbols} are defined in analogy with the Riemannian
version. We denote by $\gr_\CC$ (for simplicity, just $\gr$)  the Levi-Civita
connection of the hermitian metric $g$.
\begin{eqnarray*}
(\gr)_{\ddzi}\frac{\partial}{\partial z^j} & = &
\left(
\Ga^l_{ij} \frac{\partial}{\partial z^l}
+ \Ga^{\lbar}_{ij} \frac{\partial}{\partial \zbar^l}
\right)\\
(\gr)_{\ddzi}\frac{\partial}{\partial \zbar^j} & = &
\left(
\Ga^l_{i\jbar} \frac{\partial}{\partial z^l}
+ \Ga^{\lbar}_{i\jbar} \frac{\partial}{\partial \zbar^l}
\right)
\end{eqnarray*}

A more useful expression for the Christoffel symbols is given by the
following lemma.

\begin{lemma}
In holomorphic coordinates, the Christoffel symbols are
$$
\Ga^k_{ij} = \frac{1}{2}g^{k\lbar}
\left(
\ddzi g_{j\lbar} + \frac{\partial}{\partial \zbar^j}g_{i\lbar}
-\frac{\partial}{\partial \zbar^j} \frac{\partial}{\partial \zbar^l}g_{ij}
\right)
=
g^{k\lbar} \ddzi g_{j\lbar}.
$$
and $\Ga^k_{ij} = \Ga^k_{ji}$

All the coefficients are zero, except for the ones of the form $\Ga^k_{ij}$
or $\Ga^{\kbar}_{\ibar \jbar}$.
\end{lemma}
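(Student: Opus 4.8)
The plan is to compute the Christoffel symbols directly from the standard formula for the Levi-Civita connection, working in holomorphic coordinates and exploiting the two structural facts already established in this chapter: that the Kähler metric has only mixed components $g_{i\jbar}$ (the pure components $g_{ij}$ and $g_{\ibar\jbar}$ vanish by $J$-invariance) and that $d\om_g = 0$, which in coordinates reads $\partial_i g_{j\kbar} = \partial_j g_{i\kbar}$ and the conjugate identity $\partial_{\ibar} g_{k\jbar} = \partial_{\jbar} g_{k\ibar}$. First I would write the complexified Koszul formula $2\ip{\gr_X Y, Z} = X\ip{Y,Z} + Y\ip{X,Z} - Z\ip{X,Y} + \ip{[X,Y],Z} - \ip{[X,Z],Y} - \ip{[Y,Z],X}$, applied to the coordinate vector fields $\ddzi$, $\ddzbarj$, whose Lie brackets all vanish; this reduces each computation to a combination of three first derivatives of metric coefficients.

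Next I would run through the cases one at a time. For $\gr_{\ddzi}\frac{\partial}{\partial z^j}$ I would pair against $\frac{\partial}{\partial z^k}$ and against $\frac{\partial}{\partial \zbar^k}$. Pairing against $\frac{\partial}{\partial z^k}$ gives an expression built from $g_{j\kbar}$, $g_{i\kbar}$ and the vanishing pure component $g_{ij}$; since $g_{ij}\equiv 0$, the $\zbar^k$-derivative of it is zero, and so $\ip{\gr_{\ddzi}\frac{\partial}{\partial z^j},\frac{\partial}{\partial z^k}}$ is expressed purely in terms of $\partial_i g_{j\kbar}$ and $\partial_j g_{i\kbar}$, which are equal by the Kähler condition, yielding $\ip{\gr_{\ddzi}\frac{\partial}{\partial z^j},\frac{\partial}{\partial z^k}} = \partial_i g_{j\kbar}$. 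Pairing against $\frac{\partial}{\partial \zbar^k}$ involves only derivatives of the pure components $g_{jk}$, $g_{ik}$ and $g_{ij}$, all of which vanish identically, so that pairing is zero; hence $\Ga^{\lbar}_{ij}=0$ and raising the index with $g^{k\lbar}$ gives $\Ga^k_{ij} = g^{k\lbar}\partial_i g_{j\lbar}$. Symmetry $\Ga^k_{ij}=\Ga^k_{ji}$ then follows from the Kähler symmetry $\partial_i g_{j\lbar} = \partial_j g_{i\lbar}$. Similarly, for $\gr_{\ddzi}\frac{\partial}{\partial \zbar^j}$ I would check that pairing against $\frac{\partial}{\partial z^k}$ and $\frac{\partial}{\partial \zbar^k}$ both involve only derivatives of vanishing or Kähler-compatible terms in a way that forces the whole connection coefficient to vanish; the key cancellation is that $\partial_{\ibar} g_{k\jbar} - \partial_{\jbar} g_{k\ibar} = 0$. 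Running the conjugate cases gives that the only nonvanishing symbols are $\Ga^k_{ij}$ and its conjugate $\Ga^{\kbar}_{\ibar\jbar} = g^{\kbar l}\partial_{\ibar} g_{l\jbar}$.

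The first displayed formula in the lemma, $\Ga^k_{ij} = \frac12 g^{k\lbar}(\partial_i g_{j\lbar} + \partial_{\jbar} g_{i\lbar} - \partial_{\jbar}\partial_{\zbar^l} g_{ij})$, is really a red herring in the sense that the last term has a vanishing factor $g_{ij}$ and the middle term $\partial_{\jbar} g_{i\lbar}$ is a holomorphic-vs-antiholomorphic mixed derivative that one must argue also drops out; I would point out that the stated intermediate expression is a mild abuse mixing holomorphic and antiholomorphic index patterns, and the honest statement is that both the pure-metric term and that middle term are not present, leaving the clean formula $\Ga^k_{ij} = g^{k\lbar}\partial_i g_{j\lbar}$. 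The only mild obstacle is bookkeeping: keeping track of which index is barred and systematically invoking "this component of $g$ vanishes" versus "this pair of derivatives is equal by $d\om_g=0$" at each of the roughly six sub-cases. There is no analytic difficulty — everything is algebra plus the closedness identity — so I would present it as: write the Koszul formula, note all brackets vanish, substitute, and in each slot either kill a term by $g_{ij}\equiv 0$ or collapse two terms by the Kähler identity, then raise indices.
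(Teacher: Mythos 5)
Your proof is correct and is precisely the ``direct calculation from the definition'' that the paper deliberately omits: complexify the Koszul formula, note the coordinate brackets vanish, kill every pure component $g_{ij}$, $g_{\ibar\jbar}$, and collapse the remaining pairs of terms using $d\om_g=0$, then raise the index; your observation that the lemma's first displayed expression is garbled (the $g_{ij}$ term vanishes and the barred-derivative middle term should not appear) is also accurate. One cosmetic slip to fix before writing it up: in your case analysis the two pairings are labelled backwards --- the pairing that produces the mixed components $g_{j\kbar}$, $g_{i\kbar}$ and yields $\ip{\gr_{\ddzi}\frac{\partial}{\partial z^j},\frac{\partial}{\partial \zbar^k}}=\partial_i g_{j\kbar}$ is the one against $\frac{\partial}{\partial \zbar^k}$, while the pairing against $\frac{\partial}{\partial z^k}$ is the one involving only the vanishing pure components and hence gives $\Ga^{\lbar}_{ij}=0$ --- but the substance of each case, and of the vanishing of $\gr_{\ddzi}\ddzbarj$ via the two K\"ahler identities, is right.
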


Let $R(g) = R_{i\jbar k \lbar}$ be the coordinates of the $(4,0)$-Riemann curvature tensor of the metric $g$
written in holomorphic coordinates. It is useful to know the following expression, writing $R(g)$
in terms of the metric $g$.

\begin{lemma}
The components of the \kahler Riemann tensor are given by
$$
R_{i\jbar k \lbar} =
-\frac{\partial^2}{\partial z^i \partial \zbar^j} g_{k\lbar} + g^{u\vbar}\ddzi g_{k\vbar}
\frac{\partial}{\partial d\zbar^j} g_{u\lbar}.
$$
\end{lemma}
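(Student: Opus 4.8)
The plan is to compute the full Riemann curvature tensor directly from its definition $R(X,Y)Z = \nabla_X\nabla_Y Z - \nabla_Y\nabla_X Z - \nabla_{[X,Y]}Z$, specialized to the coordinate vector fields $X = \partial/\partial z^i$, $Y = \partial/\partial\bar z^j$, $Z = \partial/\partial z^k$, and then lower the index with $g_{\cdot\bar l}$. Since the coordinate vector fields commute, the bracket term drops, so we only need $\nabla_{\partial/\partial z^i}\nabla_{\partial/\partial\bar z^j}(\partial/\partial z^k)$ and the term with $i,j$ swapped. This is where the \kahler condition does the heavy lifting: by the previous lemma the only nonvanishing Christoffel symbols are $\Gamma^m_{pq}$ (all holomorphic indices) and $\Gamma^{\bar m}_{\bar p\bar q}$ (all antiholomorphic), so $\nabla_{\partial/\partial\bar z^j}(\partial/\partial z^k) = \Gamma^l_{\bar j k}\partial/\partial z^l + \Gamma^{\bar l}_{\bar j k}\partial/\partial\bar z^l = 0$. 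Hence the second mixed covariant derivative $\nabla_i\nabla_{\bar j}(\partial/\partial z^k)$ vanishes, and the entire curvature reduces to $-\nabla_{\partial/\partial\bar z^j}\nabla_{\partial/\partial z^i}(\partial/\partial z^k)$.

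Next I would expand $\nabla_{\partial/\partial z^i}(\partial/\partial z^k) = \Gamma^p_{ik}\,\partial/\partial z^p$ and apply $\nabla_{\partial/\partial\bar z^j}$ to this, using the Leibniz rule: $\nabla_{\partial/\partial\bar z^j}(\Gamma^p_{ik}\,\partial/\partial z^p) = (\partial/\partial\bar z^j)(\Gamma^p_{ik})\,\partial/\partial z^p + \Gamma^p_{ik}\,\nabla_{\partial/\partial\bar z^j}(\partial/\partial z^p)$, and the second summand again vanishes by the same index-type argument. Taking the inner product with $\partial/\partial\bar z^l$, i.e. contracting with $g_{p\bar l}$, gives
$$
R_{i\bar j k\bar l} = -\frac{\partial}{\partial\bar z^j}\!\left(\Gamma^p_{ik}\right) g_{p\bar l}.
$$
Now substitute the closed formula $\Gamma^p_{ik} = g^{p\bar u}\,\partial g_{k\bar u}/\partial z^i$ from the preceding lemma and differentiate the product: $\partial/\partial\bar z^j(g^{p\bar u}\,\partial g_{k\bar u}/\partial z^i) = g^{p\bar u}\,\partial^2 g_{k\bar u}/(\partial z^i\partial\bar z^j) + (\partial g^{p\bar u}/\partial\bar z^j)\,\partial g_{k\bar u}/\partial z^i$. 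Using the standard identity $\partial g^{p\bar u}/\partial\bar z^j = -g^{p\bar v}g^{s\bar u}\,\partial g_{s\bar v}/\partial\bar z^j$ and then contracting everything with $g_{p\bar l}$ (which collapses $g^{p\bar u}g_{p\bar l}$ appropriately), one is left with exactly
$$
R_{i\bar j k\bar l} = -\frac{\partial^2 g_{k\bar l}}{\partial z^i\partial\bar z^j} + g^{u\bar v}\,\frac{\partial g_{k\bar v}}{\partial z^i}\,\frac{\partial g_{u\bar l}}{\partial\bar z^j},
$$
which is the claimed expression.

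The only real subtlety — and the step I would be most careful with — is bookkeeping: keeping track of which indices are barred, making sure every Christoffel-type term that arises from a covariant derivative of a coordinate field of "mixed type" is genuinely zero (this is precisely the \kahler hypothesis and should be invoked explicitly), and getting the contraction $\partial g^{p\bar u}/\partial\bar z^j$ right with its sign. There are no analytic difficulties here; it is an entirely local computation in holomorphic coordinates, and the previous two lemmas supply everything needed. I would present it as a short direct verification, emphasizing at the outset that the vanishing of the mixed second covariant derivative is what makes the \kahler curvature so much simpler than its Riemannian counterpart.
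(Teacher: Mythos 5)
Your computation is correct and is precisely the ``direct calculation using the definition'' that the paper invokes but omits: the vanishing of the mixed-type Christoffel symbols (from the preceding lemma) kills the $\nabla_i\nabla_{\bar j}$ term, and differentiating $\Gamma^p_{ik}=g^{p\bar u}\partial_i g_{k\bar u}$ with the identity $\partial_{\bar j}g^{p\bar u}=-g^{p\bar v}g^{s\bar u}\partial_{\bar j}g_{s\bar v}$ and contracting with $g_{p\bar l}$ yields exactly the stated formula. No gaps.
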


Note that the only non-vanishing terms have two barred indices exactly. The vanishing of the
others has to do with the fact that $R(X,Y)$ is $J$-invariant on a \kahler manifold.

We define the {\bf Ricci curvature tensor} of the metric $g$ as being the trace of the
Riemann curvature tensor. Its components in local coordinates can be written
as
\begin{equation}
\label{e.Riccidef}
\Ric_{k\lbar} =  \sum_{i, j = 1}^n g^{i\jbar} R_{i\jbar k \lbar} =
-\frac{\partial^2}{\partial z_k \partial \zbar_l} \log \det (g_{i\jbar}).
\end{equation}

The {\bf Ricci form} associated to $g$ can then be defined by setting
\beeq
\label{e.ricci}
\Ric = \sum_{i, j = 1}^n Ric_{i\jbar} dz^i \wedge d\zbar^j.
\eeeq
in local coordinates.

Finally, we define the {\bf Laplacian} acting on functions  to
be given by
$$
\De  = g^{i{\jbar}}\gr_i \gr_{\jbar} = g^{i\jbar} \frac{\partial^2}{\partial z^i \partial z^j}.
$$

Further expressions concerning commutators of $\gr_i$, $\gr_{\jbar}$ and $\De$ will be
introduced as needed in the following chapter.

\section{Ricci-flat metrics: the Calabi-Yau theorem}
\label{Monge-Ampere}

In order to motivate the search for special metrics in each \kahler class,
we shall start from its original motivation.

\bigskip

Recall that the coordinates of the Ricci tensor are given by (\ref{e.ricci}).
The{\bf Ricci form} associated to $g$ can then be defined by setting
$$
\Ric = \sum_{i, j = 1}^n Ric_{i\jbar} dz^i \wedge d\zbar^j.
$$
in local coordinates. In fact, a computation shows that
the Ricci form is given by
$$
\Ric = -\frac{\ii}{2\pi}\frac{\partial^2}{\partial z^i \partial z^j}(\log \det(g)).
$$

Now, given a metric $g$, we can define a matrix-valued $2$-form $\Om$ by
writing its expression in local coordinates, as follows
\begin{equation}
\label{e.curvform}
\Om_i^j = \sum_{i, p = 1}^n g^{j\pbar}  R_{i\pbar k \lbar} dz^k \wedge d\zbar^l.
\end{equation}
This expression for $\Om$ gives a well-defined matrix of $(1,1)$-forms, to
be called the {\bf  curvature form } of the metric $g$.

Following Chern-Weil Theory, we want to look at  the following expression
$$
\det\left(\text{Id} + \frac{t\sqrt{-1}}{2\pi} \Om \right) = 1 + t\phi_1(g) + t^2\phi_2(g) + 
\dots,
$$
where each $\phi_i(g)$ denotes the $i$-th homogeneous component of the left-hand side,
considered as a polynomial in the variable $t$.

Each of the forms $\phi_i(g)$ is a $(i,i)$-form, and is called the
{\bf $i$-th Chern form} of the metric $g$. It is a fact (see for example
\cite{Wells} for further explanations) that the cohomology class
represented by each  $\phi_i(g)$ is independent on the metric $g$, and hence it is
a topological invariant of the manifold $M$.
These cohomology classes are called the {\bf Chern classes} of $M$
and they are going to be denoted by $c_i(M)$.

{\bf Remark:} We can define more generally the curvature $\Om(E)$ of a
hermitian metric $h$ on
a general complex vector bundle $E$ on a complex manifold $M$.

Let $\nabla = \nabla(h)$ be a connection on a vector bundle $E \rightarrow M$.
Then the {\bf curvature} $\Om_E(\nabla)$ is defined to be the element
$\Om \in \Om^2(M, \text{End}(E, E))$ such that the $\CC$-linear mapping
$$
\Om : \Gamma(M, E) \rightarrow \Om^2(M, E)
$$
has the following representation with respect to a frame $f$:
$$
\Om(f) = \Om(\nabla, f) = d\theta(f) + \theta(f) \wedge \theta(f).
$$
Here, $\Gamma(M, E)$ is the set of sections of the vector bundle $E$,
$\Om^2(M, E)$ is the set of $E-$valued $2$-forms, and
$\theta(f)$ is the connection matrix associated with $\nabla$ and $f$
(with respect to $f$, we can write $\nabla = d + \theta(f)$).

Simiarly one defines
the Chern class $c_i(M, E)$ of a vector bundle and these are also independent on the
choice of the connection.
In fact, we use the expression ``Chern classes $c_i(M)$ of the manifold $M$''
meaning the Chern classes $c_i(M, TM)$
of the tangent bundle of M.

\bigskip

We will restrict our attention to the first Chern class $c_1(M)$ of the manifold $M$.
Note that the form $\phi_1(g)$
represents  the class $c_1(M)$ (by definition),
and that $\phi_1(M)$ is simply the trace of the
curvature form:
\begin{equation}
\label{e.phi1}
\phi_1(g) =  \frac{\sqrt{-1}}{2\pi} \sum_{i=1}^n \Om_i^i
= \frac{\sqrt{-1}}{2\pi} \sum_{i, p = 1}^n g^{i\pbar}  R_{i\pbar k \lbar} dz^k \wedge d\zbar^l.
\end{equation}

On the other hand, notice that the right-hand side of (\ref{e.phi1}) is equal to
$\frac{\sqrt{-1}}{2\pi} \Ric_{k\lbar}$, in view of (\ref{e.Riccidef}).
Therefore, we conclude that the Ricci form of a \kahler metric represents
the first Chern class of the manifold $M$.
A natural question that arises is: given a \kahler 
class
$[\om] \in H^2(M, \RR) \cap H^{1,1}(M, \CC)$ in a compact, complex manifold
$M$, and any $(1, 1)$-form $\Om$ representing $c_1(M)$, is that possible to
find a metric $g$ on $M$ such that $\Ric(g) = \Om$?

This question was addressed to by Calabi in 1960, and it was answered by Yau \cite{Yau} almost
$20$ years later.

\begin{thm}
{\bf (Yau, 1978)}
If the  manifold $M$ is compact and \kahler, then there exists a unique
\kahler metric $g$ on $M$ satisfying $\Ric(g) = \Om$.
\end{thm}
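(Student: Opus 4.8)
The plan is to fix the \kahler class $[\om_0]$ in which the metric $g$ is sought (this class is part of the data, as in the discussion preceding the statement), reduce the equation $\Ric(g)=\Om$ to a scalar complex Monge--Amp\`ere equation for a potential, solve that equation by the continuity method, and deduce uniqueness separately from the maximum principle. For the reduction: by the global $\partial\bar\partial$-lemma on a compact \kahler manifold, every metric in $[\om_0]$ has the form $\om_\vp=\om_0+\tfrac{\ii}{2}\partial\bar\partial\vp$ for a smooth $\vp$ with $\om_\vp>0$, unique up to an additive constant. Since $\Ric(\om_0)$ and $\Om$ both represent $c_1(M)$, we may write $\Ric(\om_0)-\Om=\tfrac{\ii}{2}\partial\bar\partial F$ for a smooth $F$; using the local formula $\Ric_{k\lbar}=-\partial_k\partial_{\lbar}\log\det(g_{i\jbar})$ from (\ref{e.Riccidef}), the equation $\Ric(\om_\vp)=\Om$ becomes
$$
\det\!\left(g_{0,i\jbar}+\partial_i\partial_{\jbar}\vp\right)=e^{\,F+c}\det\!\left(g_{0,i\jbar}\right),\qquad \om_0+\tfrac{\ii}{2}\partial\bar\partial\vp>0,
$$
where $c$ is forced by integrating both sides against $\om_0^n$ and using $[\om_\vp]=[\om_0]$. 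So the whole problem is to solve $(\om_0+\tfrac{\ii}{2}\partial\bar\partial\vp)^n=e^{f}\om_0^n$ for an admissible $\vp$, where $f=F+c$ is normalized so that $\int_M e^{f}\om_0^n=\int_M\om_0^n$.

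To solve this I would run the continuity method along $t\in[0,1]$:
$$
\left(\om_0+\tfrac{\ii}{2}\partial\bar\partial\vp_t\right)^n=e^{\,tf+c_t}\,\om_0^n,\qquad \int_M\vp_t\,\om_0^n=0,
$$
with $c_t$ the normalizing constant, so that $t=0$ is solved by $\vp_0=0$. Let $S\subset[0,1]$ be the set of parameters for which a smooth admissible solution exists. Openness of $S$ follows from the implicit function theorem in H\"older spaces: the linearization of $\vp\mapsto\log\big((\om_0+\tfrac{\ii}{2}\partial\bar\partial\vp)^n/\om_0^n\big)$ at a solution $\vp_t$ is the Laplacian $\De_{g_{\vp_t}}$, which is an isomorphism from mean-zero $C^{2,\alpha}$ functions onto mean-zero $C^{0,\alpha}$ functions by standard elliptic theory on the compact manifold $M$.

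Closedness is the heart of the argument and needs a priori estimates on $\vp_t$ that are uniform in $t$: (i) a $C^0$ bound $\|\vp_t\|_{C^0}\le C$, by Moser iteration applied to powers of $\vp_t$, exploiting the Monge--Amp\`ere structure together with a Sobolev inequality on $M$; (ii) a second-order bound, obtained by applying the maximum principle to a quantity such as $e^{-C\vp_t}(n+\De_{\om_0}\vp_t)$ and absorbing the curvature terms of the fixed metric $\om_0$ through a Bochner-type identity --- combined with the equation this pins the eigenvalues of $g_0^{-1}g_{\vp_t}$ between two positive constants, so the equation is uniformly elliptic along the path; (iii) a $C^{2,\alpha}$ bound, either via Calabi's third-order estimate followed by Schauder, or via the complex Evans--Krylov theorem for the now uniformly elliptic, concave operator $\log\det$; and (iv) bootstrapping through the linearized equation to obtain uniform $C^\infty$ bounds. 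Arzel\`a--Ascoli then shows $S$ is closed, hence $S=[0,1]$, and in particular $t=1$ is solved. I expect step (ii) --- choosing the right auxiliary function and carrying out the Bochner computation so that all curvature terms are controlled --- to be the main obstacle, with the $C^0$ estimate of step (i) the next most delicate point.

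Finally, uniqueness: if $\om_\vp$ and $\om_\psi$ are two solutions in $[\om_0]$, then $\om_\vp^n=\om_\psi^n=e^f\om_0^n$, so
$$
0=\om_\vp^n-\om_\psi^n=\tfrac{\ii}{2}\partial\bar\partial(\vp-\psi)\wedge\Theta,\qquad \Theta=\sum_{k=0}^{n-1}\om_\vp^{k}\wedge\om_\psi^{n-1-k},
$$
and $\Theta$ is a strictly positive $(n-1,n-1)$-form. Hence $\vp-\psi$ satisfies a linear second-order elliptic equation with no zeroth-order term, so it is constant by the maximum principle; therefore $\om_\vp=\om_\psi$, and the \kahler metric in $[\om_0]$ with $\Ric(g)=\Om$ is unique.
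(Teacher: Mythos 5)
Your proposal follows essentially the same route as the text: reduction to the complex Monge--Amp\`ere equation via the $\partial\bar\partial$-lemma, the continuity method with openness from the invertibility of the Laplacian, and closedness resting on the {\em a priori} estimates (which you outline in more detail than the text, which simply cites \cite{Yau}). The only cosmetic difference is in uniqueness, where you apply the strong maximum principle to $\tfrac{\ii}{2}\partial\bar\partial(\vp-\psi)\wedge\Theta=0$ rather than multiplying by $\vp-\psi$ and integrating by parts as Calabi does; both are standard and correct.
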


This theorem has a large number of applications in different
areas of Mathematics and Physics. Its proof is based on translating
the geometric statement into a non-linear
partial differential equation, as follows.

First fix a \kahler form $\om \in [\om]$ representing the previously given
\kahler class in $H^2(M, \RR) \cap H^{1,1}(M, \CC)$.
In local coordinates, we can write
$\om$ as $\om = g_{i\jbar}dz^i \wedge d\zbar^j$.

The $(1, 1)$-form $\Om$ is a representative for  $c_1(M)$, and
we have seen that $\Ric(\om)$
represents the same cohomology class as $\Om$.
Therefore, since $\Ric(\om)$  is also a  $(1, 1)$-form,
we have that,  due to the famous $\partial \bar \partial$-Lemma,
there exists a function $f$ on $M$ such that
$$
\Ric(\om) - \Om =  \frac{\sqrt{-1}}{2\pi} \partial \bar\partial f,
$$
where $f$ is uniquely determined after imposing the normalization
\begin{equation}
\label{e.integrability}
\int_M \left( e^f - 1 \right) \om^n = 0.
\end{equation}
Notice that $f$ is fixed once we have fixed $\om$ and $\Om$.

Again by the $\partial \bar \partial$-Lemma,
any other $(1, 1)-$form in the same cohomology class $[\om]$
will be written as
$\om + \frac{\sqrt{-1}}{2\pi} \partial \bar\partial \phi$, for some
function $\phi \in C^\infty(M, \RR)$.

Therefore, our goal is to find a representative
$\om + \frac{\sqrt{-1}}{2\pi} \partial \bar\partial \phi$ of the
class $[\om]$ that satisfies
\begin{equation}
\label{e.ric}
\Ric\left( \om + \frac{\sqrt{-1}}{2\pi} \partial \bar\partial \phi \right)
=
\Om =
\Ric(\om) - \frac{\sqrt{-1}}{2\pi} \partial \bar\partial f.
\end{equation}

Rewriting (\ref{e.ric}) in local coordinates, we have
$$
- \partial \bar\partial \log \det \left( g_{i\jbar} +
\frac{\partial^2 \phi}{\partial z_i \partial \zbar_j}
                                                  \right)
=
- \partial \bar\partial \log \det \left( g_{i\jbar}\right) - \partial \bar\partial f,
$$
or
\begin{equation}
\label{e.welldef}
 \partial \bar\partial \log
\frac{\det \left( g_{i\jbar} +
\frac{\partial^2 \phi}{\partial z_i \partial \zbar_j}
                                                  \right)}{\det \left( g_{i\jbar}\right)}
= \partial \bar\partial f.
\end{equation}

Notice that, despite of the fact that this is an expression given in
local coordinates, the term at the right-hand side of (\ref{e.welldef})
is defined globally.
Therefore, we obtain an equation
well-defined on all of $M$. In turn, this equation gives rise to the
following (global) equation
\begin{equation}
\label{e.MA100}
\left(\om + \frac{\sqrt{-1}}{2\pi}\partial \bar\partial \phi \right)^n = e^f \om^n.
\end{equation}

We shall also require positivity of the resulting \kahler form:
$$\left(\om + \partial \bar\partial \phi \right) > 0  \hspace{.2cm}
\text{ on} \hspace{.2cm}     M. $$
This equation is a non-linear partial differential equation of
Monge-Amp\`ere type, that is going to be referred to from now on as the Complex
Monge-Amp\`ere Equation.

We remark that, if $\phi$ is a solution
to (\ref{e.MA100}),  $\om + \partial \bar\partial \phi$
is the \kahler form of our target metric $g$, {\em ie},
$\Ric(g) = \Om$.
Therefore,  in order
to find metrics that are solutions to Calabi's problem, it suffices to determine a solution
$\phi$ to (\ref{e.MA100}).

The celebrated Yau's Theorem in \cite{Yau} determines a unique solution to (\ref{e.MA100})
when $f$ satisfies the integrability condition
(\ref{e.integrability}). The proof of this result is based on the continuity method,
and we sketch here a brief outline of the proof.

The uniqueness part of Calabi Conjecture was proved by Calabi in the $50$'s.
Let $\om', \om'' \in [\om]$ be representatives of the \kahler class $[\om]$
such that $\Ric(\om') = \Ric(\om'') = \Om$. Without loss of generality, we may assume
that $\om'' = \om$, and hence $\om' = \om + \partial \bar \partial u$.

Notice that
\begin{eqnarray}
\label{e.blah}
0 &=& \frac{1}{\Vol_\om(M)} \int_M u((\om')^n - \om^n) \\
&=&\frac{1}{\Vol_\om(M)} \int_M -u \partial \bar \partial u \wedge
\left[  (\om')^{n-1} + \right. \\
&& \left.(\om')^{n-2}\wedge \om + \cdots + \om^{n-1}   \right].
\end{eqnarray}

\noindent However $\om'$ is a \kahler form, so that $\om'>0$. We then conclude that
the right-hand side of (\ref{e.blah}) is bounded from below by
$$\frac{1}{\Vol_\om(M)} \int_M -u \partial \bar \partial u \wedge w^{n-1}.$$
Therefore,
\begin{eqnarray}
0 &\geq &\frac{1}{\Vol_\om(M)} \int_M -u \partial \bar \partial u \wedge w^{n-1}
\\
&=& \frac{1}{n\Vol_\om(M)} \int_M |\partial u|^2 w^{n}
\\
&=& \frac{1}{2n\Vol_\om(M)} \int_M |\nabla u|^2 w^{n},
\end{eqnarray}
implying that $|\nabla u| = 0$, hence $u$ is constant, proving the uniqueness
of solution to (\ref{e.MA100}).

\bigskip

Let us now consider the existence of solution to  (\ref{e.MA100}).
Define, for all $s \in [0,1]$, $f_s = sf + cs$, where
the constant $c_s$ is defined by the requirement that $f_s$ satisfies the
integrability condition $\int_M[e^{f_s} - 1 ]\om^n = 0 $.

Consider the family of equations
\begin{equation}
\label{e.fs}
\left( \om + \partial \bar \partial u_s\right)^n = e^{f_s} \om^n.
\end{equation}
We already prove that the solution $u_s$ to (\ref{e.fs}) is unique, if
it exists.

Let $A = \{s \in [0,1];$ (\ref{e.fs}) is solvable for all $t \leq s \}$.
Since $A \neq \emptyset$, we just need to show that $A$ is open and closed.

{\bf Openness:} Let $s \in A$, and let $t$ be close to $s$. We want to show that
$t \in A$. In order to do so, let $\om_s = \om + \partial \bar \partial u_s$, for $u_s$ a
solution to  (\ref{e.fs}).
We define the operator $\Psi = \Psi_s$ by
$$
\Psi: X \rightarrow Y; \hspace{2cm}
\Psi(g) = \log\left(\frac{(\om_s + \partial \bar \partial g)^n}{w_s^n}\right),
$$
where $X$ and $Y$ are subsets (not subspaces) of
$C^{2, 1/2}(M.\RR)$ and $C^{0, 1/2}(M.\RR)$
satisfying some extra non-linear conditions.

The linearization of $\Psi$ about $g = 0$ is simply the
metric laplacian with respect to the metric $\om_s$.
By the Implicit Function Theorem, the invertibility of the
laplacian (a result that can be found in \cite{GT}, for example)
establishes the claim.

{\bf Closedness:}
The proof that $A$ is closed is a deep result, involving complicated
{\em a priori} estimates. A reference for this proof is Yau's paper itself \cite{Yau},
or for a more detailed proof, the books \cite{Tianbook} and \cite{Asterisque}.

\medskip

 Yau's Theorem provided a satisfatory answer to the problem of
finding Ricci-flat metrics when  the underlying manifold $M$ is compact.

\section{\KE metrics}

The simplest examples of \kahler Ricci solitons are the (static) \KE metrics.

\bede
We say that a \kahler metric is {\bf \KE} if its Ricci form is a constant multiple of
the \kahler metric.
\eede

Since $\Ric$ represents the first Chern class, a
topological invariant, the existence of a \KE metric
implies that the Chern class of the manifold has a fixed sign.

If $c_1(M) < 0 $, Aubin and Yau \cite{Aubin-Yau} proved the Calabi conjecture
for negative first Chern class.

\bethm
{\bf (Aubin, Yau)} If a compact, complex manifold $M$ has $c_1(M) <0$, then
there exists  a \KE metric with negative scalar curvature. This metric
is unique up to scaling.
\eethm

The situation is far more complicated if $c_1(M) > 0$. In fact, there are
obstructions to the existence of a \KE metric in this scenario, namely  the so-called
{\bf Futaki Invariant}.

Fix a metric $\om$ such that $[\om] = c_1(M)$. The $\partial \bar\partial$-Lemma
implies that there exists a smooth function $f$ on $M$ such that
$$
\Ric - \om = \frac{\ii}{2\pi} \partial \bar \partial f.
$$

Let ${\cal H}(M)$ denote the set of holomorphic vector fields on $M$. We define the
{\bf Futaki functional} ${\cal F}_{[\om]} : {\cal H}(M) \rightarrow  \CC$ by
$$
{\cal F}_{[\om]}(V) = \int_M \ip{V, \gr f} d\mu.
$$

Futaki \cite{Futaki} proved that ${\cal F}_{[\om]}$ only depends on the
cohomology class $[\om]$, and its vanishing is a necessary condition for the
existence of a \KE metric.
Tian \cite{Tian-Futaki} showed, however, that the converse is not true: he
proved that some examples where ${\cal H}(M) =0$, but there are no
\KE metrics.

\clearpage\mbox{}\clearpage

\chapter{\kahler Ricci Flow}

This chapter is devoted to the study of the \RF on \kahler manifolds,
as first introduced by Cao \cite{Cao}.

Last chapter, we discussed the problem of finding \KE metrics on a compact manifold. Recall that
a necessary condition for a \kahler manifold to admit such metrics is that the first Chern Class
$c_1(M)$ has a sign.

The cases $c_1(M) = 0$ and $c_1(M) \leq 0$ were settled by  Yau \cite{Yau} and Aubin \cite{Aubin-Yau},
while we have seen that there are obstructions to the existence of a \KE metric if
$c_1(M) \geq 0$ (the non-vanishing of the Futaki invariant).

Here, the idea of Cao \cite{Cao} will be discussed in detail. By flowing any \kahler metric on a compact
\kahler manifold with either
$c_1(M) = 0$ or $c_1(M) \leq 0$
by the \RF, we obtain the (unique) \KE metric in the same \kahler class as the starting metric.

It should be noted here that Cao's proof relies in some results that are generalizations of Yau's
higher order estimates derived in \cite{Yau}.

\section{Settings}

Let
$$\Om = \frac{\ii}{2 \pi} T_{i\jbar} dz^i \wedge d\zbar^j$$
be a fixed representative of the
first Chern class $c_1(M)$, and denote, as before, the Ricci form of a \kahler metric $g$ by
$$
\Ric = \frac{\ii}{2 \pi} \Ric_{i\jbar} dz^i \wedge d\zbar^j = \frac{\ii}{2 \pi}  -\partial \bar \partial (\log \det(g)).
$$

The {\bf \kahler \RF equation } is
\beeq
\label{e.KRF}
\begin{cases}
\ddt {\tilde g}_{i\jbar}(t) = - {\tilde \Ric}_{i\jbar}(t) + T_{i\jbar} \\
{\tilde g}_{i\jbar}(0) = {g}_{i\jbar}
\end{cases}
\eeeq

If we can prove that the solution to (\ref{e.KRF}) exists for all times,
and converges to a limiting metric ${\tilde g}_{\infty}$, as well as show that
the derivatives $\ddt {\tilde g}_{i\jbar}(t)$ converge uniformly to a constant as $t$
approaches infinity, then ${\tilde g}_{\infty}$ is the \KE metric we want.

\subsection{Reduction to a scalar equation}

The beauty of \kahler geometry is that sometimes, all our study relies on
the \kahler potential. This is one of those happy situations.

Since both  $\Om$ and $\Ric$ lie in the same cohomology class ($c_1(M)$), the
$\partial \bar \partial$-Lemma tells us that there exists a smooth function $f$
on $M$ such that
$$
\Ric(\tilde g) - \Om = -\frac{\ii}{2\pi} \partial \bar \partial f.
$$

We are looking  for a metric $\tilde g = g +  -\frac{\ii}{2\pi} \partial \bar \partial u$,
for $u(t)$ defined on $M \times[0,t)$ with $u(0) = 0$, such that
\begin{eqnarray*}
\partial \bar \partial \left( \ddt u \right)
& = &
- {\tilde{\Ric}}_{i\jbar} + {\Ric}_{i\jbar} + \frac{\ii}{2\pi} \partial \bar \partial f \\
& = &
\partial \bar \partial \log \left(\frac{\det(g_{i\jbar} +  -\frac{\ii}{2\pi} \partial \bar \partial u)}{\det(g_{i\jbar})} \right)
+ \frac{\ii}{2\pi} \partial \bar \partial f
\end{eqnarray*}

Simplifying even more, the evolution equation for the \kahler potential $u$ is
\beeq
\label{e.KRpotential}
\ddt u =  \log \left(\frac{\det(g_{i\jbar} +  -\frac{\ii}{2\pi} \partial \bar \partial u)}{\det(g_{i\jbar})} \right) + f + c(t),
\eeeq
where $c(t)$ is a smooth function (on $t$) that satisfies the integrability condition
$$
\int_M e^{\ddt u - f} dV = e^{c(t)} \Vol(M).
$$

Equation (\ref{e.KRpotential}) is parabolic! So, short-time existence is guaranteed from
standard PDE techniques, as opposed to the (real) \RF equation.

In order to show long-time existence, we need to develop some {\em a priori} estimates
of the solution up to third order. With this in hand, a slight modification of Harnack inequality
will show that in fact $u(t) \rightarrow u_{\infty}$
uniformly, and that $\ddt u$ also converges uniformly to a constant.

Finally, we will discuss briefly the negative case of the Calabi Conjecture.

\section{Long-time existence}

Throughout this section, $u$ will denote the solution to the initial value problem
$$
\begin{cases}
\ddt u =  \log \det(g_{i\jbar}  -\frac{\ii}{2\pi} \partial \bar \partial u) - \log(\det(g_{i\jbar}))  + f \\
u(x, 0) = 0.
\end{cases}
$$
on the maximal time interval $[0, T)$, $T > 0$.

Differentiating this equation with respect to time, we obtain
$$
\ddt(\ddt u) = {\tilde g}^{i\jbar} \frac{\partial^2}{ \partial z^i \partial \zbar^j}
(\ddt u) = {\tilde \De}(\ddt u),
$$
and applying the maximum principle, we find out that
$$
\max_M |\ddt u| \leq \max_M |f|.
$$

\begin{lemma}
\label{l.lemma1}
Let $u_{\min} = \inf_{M \times [0,T)} u$. Then,
there exist constants $C_1, C_2 > 0$ such that
$$
0 < n + \De u \leq C_1 e^{C_2(u(t) - u_{\min} )}
$$
for all $t \in [0, T)$.
\end{lemma}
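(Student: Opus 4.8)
The plan is to reduce the statement to an upper bound on a trace and then run a maximum-principle argument, following Cao \cite{Cao} (this is the parabolic counterpart for the \KRF of Yau's second-order estimate for the complex Monge--Amp\`ere equation \cite{Yau}). Writing $\tilde g_{i\jbar} = g_{i\jbar} + u_{i\jbar}$ for the coefficients of the evolving metric, with $u_{i\jbar} = \partial^2 u/\partial z^i \partial \zbar^j$ and $\De u = g^{i\jbar} u_{i\jbar}$, one has
$$
n + \De u \;=\; g^{i\jbar}\,\tilde g_{i\jbar} \;=\; \operatorname{tr}_g \tilde g ,
$$
which is positive for free, since $\tilde g$ remains a \kahler metric for as long as the flow exists; so the real content is the upper bound. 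The strategy is to apply the maximum principle to
$$
H \;=\; \log \operatorname{tr}_g \tilde g \;-\; C_2\,(u - u_{\min}),
$$
with $C_2$ to be fixed in terms of a lower curvature bound for the \emph{fixed} metric $g$.

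First I would record the evolution identities. Differentiating the flow equation gives $\ddt \tilde g_{i\jbar} = \partial_i\partial_{\jbar}(\ddt u)$, hence
$$
\ddt \log \operatorname{tr}_g \tilde g \;=\; \frac{\De(\ddt u)}{\operatorname{tr}_g \tilde g}, \qquad\quad \tilde\De u \;=\; n - \operatorname{tr}_{\tilde g} g .
$$
From the flow equation one has $\ddt u - f = \log(\det\tilde g/\det g)$, and the bound $\max_M|\ddt u| \le \max_M|f| =: C_0$ (established just above) shows that $\det\tilde g/\det g = e^{\ddt u - f}$ is squeezed between two positive constants; by the arithmetic--geometric mean inequality this also forces $\operatorname{tr}_g\tilde g \ge n(\det\tilde g/\det g)^{1/n} \ge c_0 > 0$. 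The one substantive ingredient is Yau's pointwise inequality: if $-B$ is a lower bound for the holomorphic bisectional curvature of $g$, then at each point
$$
\tilde\De \log \operatorname{tr}_g \tilde g \;\ge\; \frac{\De(\ddt u - f)}{\operatorname{tr}_g \tilde g} \;-\; B\,\operatorname{tr}_{\tilde g} g .
$$
Combining the three displays, and using that $|\De f|$ is bounded while $\operatorname{tr}_g\tilde g \ge c_0$, one obtains $\left(\ddt - \tilde\De\right)\log \operatorname{tr}_g \tilde g \le C_3 + B\operatorname{tr}_{\tilde g} g$ for a constant $C_3$ depending only on $g$ and $f$.

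To close the argument, combine this with $\left(\ddt - \tilde\De\right) u = \ddt u - n + \operatorname{tr}_{\tilde g} g \ge -C_0 - n + \operatorname{tr}_{\tilde g} g$ and take $C_2 = B+1$, so that
$$
\left(\ddt - \tilde\De\right) H \;\le\; C_4 - \operatorname{tr}_{\tilde g} g
$$
for a constant $C_4$ depending only on $g$ and $f$. Running the maximum principle on $M\times[0,T']$ for $T' < T$, let $(x_0,t_0)$ realize $\max H$. If $t_0 > 0$, then $\left(\ddt - \tilde\De\right)H \ge 0$ at $(x_0,t_0)$, hence $\operatorname{tr}_{\tilde g}g(x_0,t_0)\le C_4$; the elementary inequality $\operatorname{tr}_g\tilde g \le n(\det\tilde g/\det g)(\operatorname{tr}_{\tilde g} g)^{n-1}$ (diagonalize $\tilde g$ against $g$ and apply AM--GM) then bounds $\operatorname{tr}_g\tilde g(x_0,t_0)$ by a uniform $C_5$, and since $u - u_{\min}\ge 0$ we get $H(x_0,t_0)\le\log C_5$. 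If $t_0=0$, then $\tilde g = g$, so $\operatorname{tr}_g\tilde g = n$, while $u\equiv 0\ge u_{\min}$, giving $H(x_0,0)\le\log n$. Either way $H\le\log C_1$ on $M\times[0,T']$ with $C_1 = \max(C_5,n)$; letting $T'\to T$ and exponentiating,
$$
n + \De u \;=\; \operatorname{tr}_g\tilde g \;\le\; C_1\, e^{C_2(u - u_{\min})}, \qquad C_2 = B+1,
$$
which is the assertion.

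The step I expect to be the main obstacle is Yau's pointwise inequality for $\tilde\De\log\operatorname{tr}_g\tilde g$: it is a Bochner-type computation --- differentiate $\operatorname{tr}_g\tilde g$ twice and commute covariant derivatives using the \kahler identities --- and it is the only place where the curvature of the background metric $g$ enters in an essential way (all it needs is a lower bound on the holomorphic bisectional curvature). The rest --- the evolution identities, the zeroth-order bound on $\ddt u$, the AM--GM step, and the maximum-principle bookkeeping over $M\times[0,T']$ --- is routine; the full computation is carried out in \cite{Cao} (see also \cite{Ben1}).
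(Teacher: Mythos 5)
Your argument is correct and is exactly what the paper has in mind: the paper disposes of the positivity by noting $n+\De u=\operatorname{tr}_g\tilde g>0$ since $\tilde g(t)$ stays positive definite, and for the upper bound it simply cites Yau's second-order estimate (equation (2.24) of \cite{Yau}) ``modified by considering the operator $\tilde\De-\ddt$ instead of the Laplacian'' --- which is precisely the parabolic maximum-principle argument on $\log\operatorname{tr}_g\tilde g - C_2(u-u_{\min})$ that you carry out. Your version fills in the details the paper leaves to the reference (the evolution identity $\ddt\operatorname{tr}_g\tilde g=\De(\ddt u)$, the choice $C_2=B+1$, and the AM--GM step converting a bound on $\operatorname{tr}_{\tilde g}g$ at the maximum point into one on $\operatorname{tr}_g\tilde g$), and these details check out.
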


\begin{proof}
The first inequality comes from noticing that for all times, ${\tilde g}(t)$
is positive definite, and hence
$$
{tr}_g ({\tilde g}) >0.
$$

For the second inequality, we refer the reader to \cite{Yau}, equation (2.24),
which can be modified simply by considering the operator $\tilde \De - \ddt$
instead of the Laplacian.
\end{proof}

\subsection{Zeroth order estimates}
Now, we proceed to the zeroth-order estimates for $u$. Let $v = u - {\bar u}$,
where
$$
{\bar u} = \frac{\int_M u dV}{\Vol(M)}.
$$

Again, the following lemma can be derive simply from \cite{Yau}.
For a more detailed exposition, we refer the reader to \cite{Santoro2},
which contains an analog of this result in the context of open manifolds.
\begin{lemma}
\label{l.Yau}
{\bf (Yau)}
There exist positive constants $C_3, C_4$ such that
$$
\sup_{M \times [0, T)} v \leq C_3, \hspace{2cm} \sup_{M \times [0, T)} \int_M |v| dV \leq C_4.
$$
\end{lemma}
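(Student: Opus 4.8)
The plan is to reduce the whole statement to a single pointwise upper bound $\sup_M v(t)\le C_3$ uniform in $t\in[0,T)$, after which the $L^1$ estimate is automatic. First note that $u$ and $v=u-\bar u$ differ only by a function of $t$, so $\De u=\De v$ and the evolving metric can be written $\tilde g=g+c\,\partial\bar\partial v$ for the fixed nonzero constant $c=-\ii/(2\pi)$; moreover $\int_M v\,dV=0$ by construction. Since $\tilde g(t)$ is positive definite for every $t$, taking the trace with respect to $g$ gives, with $\De$ the Laplacian of the fixed background metric $g$,
$$n+\De v=n+\De u=\mathrm{tr}_g\bigl(\tilde g\bigr)>0$$
pointwise on $M$ --- this is exactly the first inequality recorded in Lemma~\ref{l.lemma1}. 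Thus, beyond the fixed geometry of $(M,g)$, the only information I need about $v(t)$ is that it has zero average and that $n+\De v>0$.

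Next I would feed this into the Green representation formula on the compact manifold $(M,g)$. Let $G(x,y)$ be the Green's function of $\De$, normalized so that $\int_M G(x,y)\,dV(y)=0$ for all $x$; on a compact manifold $G$ is bounded below, say $G\ge -A$ with $A=A(M,g)>0$, and every $w\in C^2(M)$ satisfies $w(x)-\bar w=\frac{1}{\Vol(M)}\int_M G(x,y)\bigl(-\De w(y)\bigr)\,dV(y)$. Applying this with $w=v$ and writing $-\De v=n-(n+\De v)$, the constant $n$ integrates against $G$ to zero, leaving
$$v(x)-\bar v=-\frac{1}{\Vol(M)}\int_M G(x,y)\,\bigl(n+\De v(y)\bigr)\,dV(y).$$
Since $n+\De v>0$ pointwise and $-G\le A$, the integrand is at most $A\,(n+\De v)$, and using $\int_M \De v\,dV=0$ together with $\bar v=0$ I obtain
$$v(x)\le\frac{A}{\Vol(M)}\int_M\bigl(n+\De v\bigr)\,dV=An=:C_3,$$
uniformly in $x\in M$ and $t\in[0,T)$.

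The $L^1$ bound then costs nothing: since $\int_M v\,dV=0$ we have $\int_M v^+\,dV=\int_M v^-\,dV$, hence $\int_M|v|\,dV=2\int_M v^+\,dV\le 2C_3\Vol(M)=:C_4$, again uniformly in $t$. I do not expect any real obstacle here --- this half of Yau's zeroth-order estimate is the inexpensive one, relying on nothing beyond positivity of $\tilde g(t)$ and the background geometry. The genuinely delicate part, which is \emph{not} needed for this lemma and for which \cite{Yau} is the reference, is the complementary lower bound $\inf_M v\ge -C$, obtained by a Moser iteration driven by the complex Monge--Amp\`ere equation together with the already-established bound $\max_M|\ddt u|\le\max_M|f|$, which controls the volume ratio $\tilde\om^{\,n}/\om^n=e^{\ddt u-f}$.
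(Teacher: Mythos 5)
Your argument is correct, and it actually supplies a proof where the paper gives none: the text simply defers Lemma~\ref{l.Yau} to \cite{Yau} (and \cite{Santoro2}) without reproducing the argument. What you have written is the classical Green's-function derivation of the one-sided zeroth-order estimate, and it is exactly the "inexpensive half" you identify: the only inputs are $\int_M v\,dV=0$, the pointwise positivity $n+\De v=\mathrm{tr}_g\tilde g>0$ from Lemma~\ref{l.lemma1}, and the fixed-background facts that the normalized Green's function satisfies $\int_M G(x,\cdot)\,dV=0$ and $G\ge -A$ on a compact manifold. From
$$
v(x)=-\int_M G(x,y)\,\bigl(n+\De v(y)\bigr)\,dV(y)\le A\int_M\bigl(n+\De v\bigr)\,dV=An\,\Vol(M),
$$
the uniform-in-$t$ bound $\sup_M v\le C_3$ follows, and the $L^1$ bound is then free from $\int_M v^+\,dV=\int_M v^-\,dV$. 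Two cosmetic points: your representation formula carries a spurious $1/\Vol(M)$ unless you also renormalize $G$ accordingly (the two conventions differ by that factor, so the constant changes but nothing else), and you should be slightly careful that the paper's $\De=g^{i\jbar}\partial_i\partial_{\jbar}$ is the complex Laplacian, so the trace identity reads $n+\De u=\mathrm{tr}_g\tilde g$ exactly as you use it. Your closing remark is also the right diagnosis of the division of labor: the genuinely hard direction is the lower bound $\inf_M v\ge -C$, which is what Proposition~\ref{p.supnorm} obtains by Moser iteration using the Monge--Amp\`ere volume-ratio control, and which this lemma deliberately does not claim.
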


\begin{prop}
\label{p.supnorm}
There exists a constant $C$
such that
$$
\sup_{M \times [0, T)} |v| \leq C.
$$
\end{prop}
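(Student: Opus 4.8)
The plan is to exploit the one-sided bounds already established together with the positivity of the evolving metric, converting the estimate into an elliptic bound for the \emph{fixed} background Laplacian $\De=\De_g$, so that all constants are automatically uniform in $t\in[0,T)$. Note that Lemma~\ref{l.Yau} already supplies $v\le C_3$ and $\int_M|v|\,dV\le C_4$ uniformly in $t$, and by construction $\int_M v\,dV=0$; so the only missing ingredient is a uniform lower bound $v\ge -C$. Since $\tilde g(t)$ is positive definite for every $t$, we have $n+\De u=\text{tr}_g\tilde g>0$ (this is the first inequality of Lemma~\ref{l.lemma1}), and because $\bar u$ is constant in the space variables this says $\De v>-n$ on $M\times[0,T)$.

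First I would set $w:=C_3-v\ge 0$, the nonnegativity coming from $v\le C_3$. Then $\De w=-\De v<n$, so at each fixed time $w$ is a nonnegative subsolution of $\De w=n$ on the compact Riemannian manifold $(M,g)$, whose metric, Laplacian, Sobolev and Poincar\'e constants, and volume form are all independent of $t$. Next I would invoke the local boundedness estimate for subsolutions (Moser iteration / De Giorgi--Nash--Moser, cf.\ \cite{GT}): on the compact manifold $(M,g)$ it produces a constant $C_5=C_5(M,g,n)$, independent of $t$, with
$$
\sup_M w\le C_5\Big(\int_M w\,dV+1\Big).
$$
Finally I would bound $\int_M w\,dV\le C_3\Vol(M)+\int_M|v|\,dV\le C_3\Vol(M)+C_4$ using Lemma~\ref{l.Yau}, obtaining $\sup_M w\le C_6$ uniformly in $t$. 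Hence $v\ge C_3-C_6$ on $M\times[0,T)$, and combined with $v\le C_3$ this gives $\sup_{M\times[0,T)}|v|\le C$.

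The one place that requires care --- and the crux of the argument --- is the decision to work with the time-independent operator $\De_g$ rather than the evolving Laplacian $\tilde\De$; this is exactly what keeps the elliptic constants uniform in $t$, and it is what lets the two lemmas above do all the real work. I do not expect any further obstacle, since the substantive analysis (the bound $n+\De u>0$ and the $L^1$ and one-sided sup bounds on $v$) already lives in Lemmas~\ref{l.lemma1} and~\ref{l.Yau}, which themselves rest on Yau's estimates in \cite{Yau}. As a reality check: the Green's function of $(M,g)$ would reproduce the upper bound on $v$ directly from $\De v>-n$, but yields nothing toward the lower bound --- which is precisely why the $L^1$ bound of Lemma~\ref{l.Yau} and a genuine weak-Harnack/subsolution estimate are needed. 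The resulting $C^0$ bound on $v$ is the input required for the higher-order \emph{a priori} estimates behind long-time existence of (\ref{e.KRpotential}).
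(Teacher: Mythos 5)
There is a genuine gap at the central step. You reduce everything to the claim that $w:=C_3-v\ge 0$ satisfies a uniform sup bound because it is ``a nonnegative subsolution of $\De w=n$'' with controlled $L^1$ norm. But the only differential information you have is $\De v>-n$, i.e.\ $\De w=-\De v<n$: this is an \emph{upper} bound on $\De w$, which makes $w$ a \emph{supersolution}, not a subsolution. The De Giorgi--Nash--Moser local boundedness estimate $\sup w\le C(\|w\|_{L^1}+\dots)$ requires a lower bound on $\De w$ (equivalently an upper bound on $\De v=\De u$), and no such bound is available at this stage --- the second inequality of Lemma~\ref{l.lemma1} gives one only in terms of $u-u_{\min}$, which is exactly the quantity being estimated, so invoking it here would be circular. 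The implication you need is in fact false: a smoothed, truncated Green's function $w_\ve=\min\{G(\cdot,y)+A,\ve^{-1}\}$ is nonnegative, has $\De w_\ve$ bounded above and uniformly bounded $L^1$ norm, yet $\sup w_\ve\to\infty$. For supersolutions the correct statement is the weak Harnack inequality, which bounds $\|w\|_{L^p}$ by $\inf w$; rearranged, that only reproduces the upper bound on $\sup v$ that you already have from Lemma~\ref{l.Yau}, and says nothing about $\inf v$. Your own closing remark about the Green's function is the right diagnosis of why $\De v>-n$ controls $\sup v$ but not $\inf v$; the proposed fix does not escape that obstruction.

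The paper's proof supplies the missing ingredient from the equation itself rather than from the one-sided Laplacian bound: since the maximum principle applied to $\partial_t(\partial_t u)=\tilde\De(\partial_t u)$ gives $|\partial_t u|\le\max_M|f|$, the volume-form ratio $\tilde\om^n/\om^n=e^{\partial_t u-f}$ is uniformly bounded. Pairing $(-v)^{p-1}$ against $\om^n-\tilde\om^n$ and integrating by parts then yields $\int_M|\gr(-v)^{p/2}|^2\,dV\le Cp^2(p-1)^{-1}\int_M(-v)^p\,dV$, and the Sobolev inequality turns this into the reverse-H\"older chain $\|v\|_{L^{p\ga}}^p\le Cp\|v\|_{L^p}^p$ with $\ga=n/(n-1)$; Moser iteration starting from the $L^1$ bound of Lemma~\ref{l.Yau} then gives $\|v\|_{L^\infty}\le C$. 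If you want to keep your elliptic framing, you must first import this uniform bound on $e^{\partial_t u-f}$ --- it encodes strictly more than $n+\De u>0$ --- and run the iteration on $(-v)^{p/2}$ as above; there is no shortcut through a fixed-metric subsolution estimate.
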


\begin{proof}
The reader should be warned that different constants may be denoted by the same
character $C$. Also, all the geometric objects related to ${\tilde g}$ will be
denoted with a $\sim$ on top. Namely, $\tilde \om $ will denote the \kahler form
associated to the metric $\tilde g$, and so on.

The strategy of the proof is to use Nash-Moser iteration process: we want to bound
$L^p$-norms of the function $v$ by lower $L^p$-norms, inductively. Together with
Lemma \ref{l.Yau}, this will imply the proposition.

\medskip

The volume forms of the metrics $g$ and $\tilde g$ are given, resp., by
$$
dV = \frac{\om^n}{n!}, \hspace{1cm} d{\tilde V} = \frac{{\tilde \om}^n}{n!}.
$$
From the evolution equation of $u$, we know that
$$
\ddt u - f = \log(\frac{{\tilde \om}^n}{{\om}^n}),
$$
and hence
$$
d{\tilde V} = e^{\ddt u - f} dV.
$$

Therefore, for $p> 1$,
$$
- \frac{1}{n!} \int_M \frac{(-v)^{p-1}}{p-1}(\om^n - {\tilde \om}^n) =
\int_M \frac{(-v)^{p-1}}{p-1}\left[ e^{(\ddt u -f)} - 1 \right]dV.
$$

On the other hand,
\begin{eqnarray*}
-  \int_M \frac{(-v)^{p-1}}{p-1}(\om^n - {\tilde \om}^n) & = &
\int_M (-v)^{p-2} (\frac{\ii}{2 \pi} \partial \bar \partial v) \wedge \sum_{j=1}^{n-1} \om^j\wedge {\tilde \om}^{n-j-1}
\\
& \leq &
\int_M (-v)^{p-2} (\frac{\ii}{2 \pi} \partial v \wedge \bar \partial v)\wedge \om^{n-1},
\end{eqnarray*}
where the last inequality follows from the fact that all the other terms involving $\om^j\wedge {\tilde \om}^{n-j-1}$
are non-negative.

Hence,
\beeq
\int_M (-v)^{p-2} |\gr v|^2 dV \leq C \int_M
\leq \int_M \frac{(-v)^{p-1}}{p-1}\left[ e^{(\ddt u -f)} - 1 \right]dV,
\eeeq
and so
\beeq
\int_M |\gr(-v)^{p/2}|^2 dV \leq C \frac{p^2}{p-1} \int_M  (-v)^p dV.
\eeeq

So, we can estimate the $H^1$-norm of $(-v)^{p/2}$ by
$$
|| (-v)^{p/2}||^2_{H^1} = || \gr (-v)^{p/2}||^2_{L^2} + || (-v)^{p/2}||^2_{L^2}
\leq C \frac{p^2}{p-1} \int_M  (-v)^p dV,
$$
and the last term can be bounded above by $C {p^2} \int_M  (-v)^p dV$ if $p>1$.

Now, since the function $v$ has zero average, Sobolev inequality implies that
$$
|| (-v)^{p/2}||^2_{L^{\frac{2n}{n-1}}} \leq|| (-v)^{p/2}||^2_{H^1}.
$$
Combining the last two inequalities, we obtain, for $p>1$,
\beeq
\label{e.iteration}
|| v||^p_{L^{p\frac{n}{n-1}}} \leq C p || v||^p_{L^p}.
\eeeq

The iteration happens here: let $\ga = \frac{n}{n-1}$, and replace
$\ga^j$ for $p$ in (\ref{e.iteration}), for $j = 0, 1, \cdots$.
By letting $j \rightarrow \infty$, and use the bound from Lemma \ref{l.Yau},
we obtain
$$
||v||_{L^\infty} \leq C,
$$
where the constant $C$ is independent of time. This completes the proof of the proposition.
\end{proof}

\subsection{Higher order estimates}

Combining Proposition \ref{p.supnorm} with Lemma \ref{l.lemma1}, we obtain a uniform bound for the Laplacian
of $u$ with respect to $g$: for some constant $C$,
\beeq
\label{e.laplacian}
n + \De v \leq C e^{C (u - \bar u)} =  C e^{C (v - \inf_{M \times [0,T)} v)} \leq C.
\eeeq
Applying Schauder estimate\footnote{A chapter about Schauder Theory
can be found in  \cite{GT}.}, we have the first-order estimate
$$
\sup |\gr v| \leq C \left(  \sup |\De v| + \sup |v| \right) \leq C,
$$
where the supremum in the expression above is taken over $M \times [0,T)$.

Also, note  that
$$
d{\tilde V} = e^{\ddt u - f} dV
$$
implies that the determinant of the complex Hessian of $u$ is uniformly bounded,
and (\ref{e.laplacian}) tells us that the trace of the Hessian of $u$ is also bounded
above. This gives us  the second-order bound on $u$, and we remark that this shows that
all metrics $\tilde g$ are uniformly equivalent to $g$.

Due to a limitation in time, we will refer the reader for the original paper of Cao
\cite{Cao} for the proof of the third order {\em a priori} estimates of $v$.
The method is a modification of the estimates in Yau's paper \cite{Yau}.

We are now in  position to prove the long-time existence of the \kahler Ricci potential $u$.

\begin{thm}
\label{t.longtime}
Let $u$ be a solution to (\ref{e.KRpotential}) on the maximum interval $[0, T)$, and
let $v = u - {\text Ave}_M u$.

Then, the $C^\infty$ norm of $v$ is uniformly bounded for all $t$, and therefore $T = \infty$.
\end{thm}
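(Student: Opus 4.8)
The plan is to assemble the \emph{a priori} estimates from the previous subsections into a uniform $C^\infty$ bound for $v$ with constants that do \emph{not} depend on $T$, and then to exploit the parabolicity of (\ref{e.KRpotential}) to rule out a finite maximal time. First I would collect what is already available on $[0,T)$, each estimate with a constant depending only on $(M,g,f)$: the bound $\max_M|\ddt u|\le\max_M|f|$ coming from the heat equation satisfied by $\ddt u$; the zeroth-order estimate $\sup_{M\times[0,T)}|v|\le C$ from Proposition~\ref{p.supnorm} and Lemma~\ref{l.Yau}; the Laplacian bound $0<n+\De v\le C$ from Lemma~\ref{l.lemma1} and (\ref{e.laplacian}); the first-order bound $\sup|\gr v|\le C$ via Schauder; and, crucially, the second-order consequence that all the metrics $\tilde g(t)$ are uniformly equivalent to $g$, together with the $C^3$-type (Calabi) estimate of Cao~\cite{Cao}. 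Together these yield a uniform $C^{2,\alpha}(M)$ bound on $u(\cdot,t)$; and since $|\bar u(t)|\le|\bar u(0)|+t\max_M|f|$ is finite on any bounded time interval, the same holds for $u$ itself once we restrict to such an interval.

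Next I would bootstrap. Once $\tilde g$ is uniformly equivalent to $g$ and $u$ is bounded in $C^{2,\alpha}$, equation (\ref{e.KRpotential}) is a uniformly parabolic fully nonlinear equation whose linearization along the solution is the operator $\ddt-\tilde\De$ with H\"older-continuous coefficients; differentiating the equation in space gives linear parabolic equations for $\gr u,\gr^2 u,\dots$ with coefficients controlled by the bounds already in hand. Applying interior parabolic Schauder estimates repeatedly (cf.\ \cite{GT}) upgrades the bound to a uniform $C^{k,\alpha}$ estimate, in space and time jointly, for every $k$, again with constants independent of $T$. In particular $v$ is uniformly bounded in $C^\infty$ for all $t$, which is the first assertion; moreover the space-time bounds and the bound on $\ddt u$ show that $u(\cdot,t)$ is Cauchy in $C^\infty(M)$ as $t\to T^-$ whenever $T<\infty$.

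To finish, suppose $T<\infty$. By the previous step $u(\cdot,t)\to u_T$ in $C^\infty(M)$, and the uniform equivalence of metrics forces $g+\ii\,\partial\bar\partial u_T>0$, so $u_T$ is a genuine smooth \kahler potential and hence a legitimate initial datum. Since (\ref{e.KRpotential}) is parabolic, short-time existence starting from $u_T$ produces a solution on $[0,T+\ve)$, contradicting the maximality of $T$; hence $T=\infty$, and the $C^\infty$ bounds above then hold uniformly for all $t\in[0,\infty)$. I expect the only genuine obstacle to be the one we are allowed to cite, namely the third-order (Calabi-type) estimate for $u$: that is the delicate, computation-heavy ingredient, and it is precisely why Cao's argument leans on the higher-order machinery of \cite{Yau}. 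The remaining steps — the parabolic bootstrapping and the restart argument — are routine, the only real care being to verify that no constant in the chain of estimates depends on $T$.
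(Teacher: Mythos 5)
Your proposal follows essentially the same route as the paper: both assemble the zeroth-, second-, and third-order \emph{a priori} estimates from the preceding subsections, differentiate (\ref{e.KRpotential}) in space to obtain linear parabolic equations for the derivatives of $u$, and bootstrap with parabolic Schauder theory for $\tilde\De-\ddt$ to get uniform $C^{k,\alpha}$ bounds for every $k$. The only difference is that you spell out the final continuation argument (convergence to a smooth limit potential $u_T$ with $g+\ii\,\partial\bar\partial u_T>0$ and a short-time restart) which the paper leaves implicit, and that is a welcome clarification rather than a divergence.
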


\begin{proof}
The use of Schauder theory for the heat operator $\De - \ddt$, together
with the estimates we derived in this section, will allow us to obtain
all the estimates via bootstrapping.

Differentiating (\ref{e.KRpotential}) with respect to $z^k$, we obtain
\beeq
\label{e.long-time}
\left( \tilde \De - \ddt \right)(\frac{\partial u}{\partial z^k})
=
g^{i\jbar} \frac{\partial }{\partial z^k} {\tilde g}_{i\jbar}
+
{\tilde g}^{i\jbar} \frac{\partial }{\partial z^k} {g}_{i\jbar}.
\eeeq

The coefficients of the operator $\tilde \De - \ddt$
are $C^{0,\alpha}$-bounded, as well as the right-hand side of
(\ref{e.long-time}). Therefore (see \cite{GT}), we see that
for all $k$, $\frac{\partial u}{\partial z^k}$ is in $C^{2, \alpha}$,
and similarly,  $\frac{\partial u}{\partial \zbar^k}$ is also in $C^{2, \alpha}$.

But this implies that, in fact, the coefficients of the operator $\tilde \De - \ddt$
and the right-hand side of (\ref{e.long-time}) are $C^{1,\alpha}$-bounded. Schauder
again implies that
$\frac{\partial u}{\partial z^k}$ and
$\frac{\partial u}{\partial \zbar^k}$ are in $C^{3, \alpha}$.

By iteration, we conclude that  the $C^\infty$-norm of $v$
is uniformly bounded, which shows that the \kahler Ricci
potential $u$ is defined for all times.
\end{proof}

\section{Uniform Convergence of the potential $u(t)$}

This section will be devoted to the proof of
the uniform convergence of the normalized potential $v$,
as well as to show that $\ddt u$ converges to a constant as
$t \rightarrow \infty$.

In \cite{Cao}, Cao used a slight generalization of the Li-Yau
Harnack inequality developed in \cite{Li-Yau}. We state Cao's version
here without a proof, since it can be derived simply from the result in \cite{Li-Yau}.

\begin{thm}
\label{t.Li-Yau-KRF}
{\bf (Cao)}
Let $M$ be an $n$-dimensional  compact manifold
and let $g_{i\jbar}(t)$, $t \in [0, \infty)$, be a family of \kahler metrics satisfying:
\begin{eqnarray*}
&(i)& C g_{i\jbar}(0) \leq g_{i\jbar}(t) \leq C^{-1} g_{i\jbar}(0); \\
& (ii)& |\ddt g_{i\jbar}|(t) \leq C g_{i\jbar}(0); \\
& (iii)& R_{i\jbar}(t) \geq -K g_{i\jbar}(0),
\end{eqnarray*}
where $C$ and $K$ are positive constants independent of $t$.

Let $\De_t$ be the Laplacian associated to the metric $g(t)$.

If $\vp$ is a positive solution for the equation
$$
(\De_t = \ddt ) \vp(x, t) = 0
$$
on $M \times [0, \infty)$, then
for any $\alpha > 1$,
\begin{eqnarray*}
\sup_{M}\vp(x, t_1) &\leq &  \inf_M \vp(x,t_2) \left(\frac{t_1}{t_2}\right)^{\frac{n}{2}}
\exp \left[ \frac{1}{4(t_2 - t_1)} C^2 d^2 \right. \\
& & \left. + \left( \frac{n \alpha K}{2(\alpha -1)} + C^2 (n + A) \right)
(t_2 - t_1)
\right],
\end{eqnarray*}
where $d$ is the diameter of $M$ measured using the initial metric,
$A = \sup |\gr^2 \log \vp|$ and $t_2 > t_1$.
 \end{thm}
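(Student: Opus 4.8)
The plan is to reduce the statement to the classical Li--Yau gradient estimate of \cite{Li-Yau}, treating the time dependence of the metrics as a perturbation that is controlled by the hypotheses $(i)$--$(iii)$. First I would pass to $L=\log\vp$, which makes sense because $\vp>0$. Since $(\De_t-\ddt)\vp=0$, a short computation gives the evolution equation
\[
\ddt L=\De_t L+|\gr L|^2,
\]
where $\gr$ and $\De_t$ are taken with respect to $g(t)$. The idea is then to run the usual Li--Yau argument on the quantity $F=t\bigl(|\gr L|^2-\alpha\,\ddt L\bigr)$, for the fixed parameter $\alpha>1$, by computing $(\De_t-\ddt)F$ via the Bochner formula and applying the scalar maximum principle.

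The only new feature, compared with the static case, is the presence of extra terms produced when one differentiates in $t$: terms coming from $\ddt\De_t$ (which carry a factor $\ddt g^{i\jbar}$) and from $\ddt|\gr L|^2$ (which carry $\ddt g_{i\jbar}$ contracted against $\gr L$ and $\gr^2 L$). By $(i)$ and $(ii)$ all of these are dominated by a constant multiple of $|\gr L|^2$, $|\gr^2 L|$ and lower-order quantities, so they are absorbed at the cost of the constant $C$ and of the term $A=\sup|\gr^2\log\vp|$ appearing in the statement. The curvature term in Bochner's identity is bounded from below using $(iii)$: since $R_{i\jbar}(t)\ge -Kg_{i\jbar}(0)\ge -CK\,g_{i\jbar}(t)$ by $(i)$, the Ricci tensor of $g(t)$ is uniformly bounded below, which is precisely the input the Li--Yau maximum-principle estimate requires. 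Applying the maximum principle to $F$ on $M\times[0,t_2]$ --- the factor $t$ in $F$ kills the boundary contribution at $t=0$, exactly as in \cite{Li-Yau} --- then yields the pointwise differential Harnack inequality
\[
|\gr L|^2-\alpha\,\ddt L\ \le\ \frac{n\alpha}{2t}+\frac{n\alpha K}{2(\alpha-1)}+C^2(n+A).
\]

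With this in hand, the final step is to integrate along a path in space--time. For any curve $\ga$ joining $(x_1,t_1)$ to $(x_2,t_2)$ with $t_1<t_2$ one has $\tfrac{d}{dt}L(\ga(t),t)=\ddt L+\langle\gr L,\dot\ga\rangle$; inserting the differential Harnack inequality, using $\tfrac1\alpha|\gr L|^2-|\gr L|\,|\dot\ga|\ge-\tfrac\alpha4|\dot\ga|^2$, and integrating gives a lower bound for $\log\vp(x_2,t_2)-\log\vp(x_1,t_1)$ whose terms are $-\tfrac\alpha4\int_{t_1}^{t_2}|\dot\ga|^2\,dt$, the quantity $-\tfrac n2\int_{t_1}^{t_2}\tfrac{dt}{t}$, and $-(t_2-t_1)$ times the remaining constants. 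Choosing $\ga$ to run along a $g(0)$-geodesic and using $(i)$ to bound its $g(t)$-speed by $C^{-1}$ times its $g(0)$-speed produces the diameter term $\tfrac{1}{4(t_2-t_1)}C^2 d^2$; the $1/t$ integral produces the power-law factor in $t_1/t_2$; and the rest gives the additive term $\bigl(\tfrac{n\alpha K}{2(\alpha-1)}+C^2(n+A)\bigr)(t_2-t_1)$. Taking exponentials and rearranging yields the asserted inequality.

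I expect the main obstacle to be the bookkeeping in the second step: organizing the Bochner computation so that every term generated by $\ddt g$ is genuinely dominated by the quantities appearing on the right-hand side (in particular, recovering the correction involving $A$), and checking that the resulting differential inequality for $F$ is of the form to which the scalar maximum principle applies. Everything else is a routine transcription of the classical Li--Yau argument into the time-dependent \kahler setting, which is why one may legitimately claim that the result "can be derived simply from" \cite{Li-Yau}.
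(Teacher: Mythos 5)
Your proposal is correct and follows exactly the route the paper intends: the text states this theorem without proof, remarking only that it ``can be derived simply from the result in \cite{Li-Yau}'', and your sketch is precisely that derivation --- running the Li--Yau maximum-principle argument on $t\bigl(|\gr L|^2-\alpha\,\ddt L\bigr)$, absorbing the terms generated by $\ddt g$ via hypotheses $(i)$--$(ii)$ (which is where the $C^2(n+A)$ correction comes from) and using $(iii)$ for the Ricci term in Bochner, then integrating along a $g(0)$-geodesic. The only discrepancies are in the exact numerical constants (e.g.\ $\alpha/4$ versus $1/4$ in the distance term), which is consistent with the paper's own loose bookkeeping of constants.
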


Let $F = \ddt u$, where $u$ is the \kahler Ricci potential.
Then, $F$ is a solution to
\beeq
\label{e.parab}
\begin{cases}
\left( \De_t - \ddt  \right) F  = 0 \\
F(x, 0) = f(x)
\end{cases}
\eeeq
By the maximum principle, we have that, for times $t_2 > t_1$,
\begin{eqnarray*}
\sup_M F(x, t_2) &< \sup_M F(x, t_1) <& \sup_M f(x) \\
\inf_M F(x, t_2) &> \inf_M F(x, t_1) >& \inf_M f(x)
\end{eqnarray*}

In addition, we note that the family of metrics $\tilde g$ satisfies the
condition in  Theorem \ref{t.Li-Yau-KRF}.

Let's define auxiliary functions
\begin{eqnarray}
\vp_n (x, t) &=& \sup_M F(x, n-1) - F(x, n-1 + t)\\
\psi_n (x, t) &=& F(x, n-1 + t) - \inf_M F(x, n-1)\\
\om_n (x, t) &=& \sup_M F(x, t) - \inf_M F(x,  t),
\end{eqnarray}
which are all positive functions and satisfy (\ref{e.parab}). Applying
Theorem~\ref{t.Li-Yau-KRF} to $\vp$ and $\phi$ (for times $t_2 = 1, t_1 = 1/2$, we obtain
$$
\om(n-1) + \om(n - \frac{1}{2}) \leq (\om(n-1) - \om (n)),
$$
which implies that
\beeq
\label{e.osc}
\om(n)\leq \de^n (\sup_M f - \inf_M f),
\eeeq
where $\de = \frac{\ga - 1}{\ga} < 1$.

Since the oscillation function is decreasing on $t$, we
conclude from (\ref{e.osc}) that for $\de = e^{-a}$,
$$
\om(t) \leq C e^{-at}.
$$

Now, we defined the normalized derivative
$$
\phi = \ddt u - (\Vol(M))^{-1} \int_M \ddt u d \tilde V.
$$
The evolution equation for $\phi$ is given by
\begin{eqnarray*}
\ddt \phi & = &
\frac{\partial^2 u}{\partial t^2} -
(\Vol(M))^{-1} \int_M\left[ \frac{\partial^2 u}{\partial t^2}  + \ddt u {\tilde \De}(\ddt u)\right] d\tilde V \\
& = &
{\tilde \De}\left(\ddt u \right) -
(\Vol(M))^{-1} \int_M \ddt u {\tilde \De}(\ddt u)d\tilde V,
\end{eqnarray*}
where the first inequality follows from recalling the evolution equation for $d\tilde V$:
$$
\ddt d\tilde V = \ddt \log \det({\tilde g}_{i\jbar}) d\tilde V = \tilde \De (\ddt u) d\tilde V.
$$

Consider the quantity
$$
E = \frac{1}{2} \int_M \phi^2 d\tilde V.
$$
Some computations yield
$$
\ddt E = \int_M(\phi -1)|\tilde \gr \ddt u|^2 d\tilde V \leq \frac{1}{2}\int_M |\tilde \gr \phi|^2 d\tilde V,
$$
where last inequality follows from the fact  that
$$
\sup_M \phi < \om(t)< \frac{1}{2}
$$
for $t$ sufficiently large.

Poincar\'e inequality applied to $phi$ tells us that
$$
 \la_1(t)\int_M \phi^2 d\tilde V  \leq\int_M |\tilde \gr \phi|^2 d\tilde V,
$$
where $\la_1$ is the first eigenvalue of $\tilde \De(t)$. Note also that due to
the uniformity of the metrics $\tilde g(t)$, there exists a constant $c$ (independent of $t$)
such that
$\la_1(t) < c$.
This implies that
$$
\ddt E \leq -c E,
$$
and hence,
$$
\int_M \phi^2 dV \leq C e^{-ct},
$$
because all $d\tilde V$ are uniformly equivalent to $dV$.

\begin{prop}
The sequence of functions $v(x, t)$ (as defined in the previous section) converges
uniformly, as $t \rightarrow \infty$, to a smooth function $v_\infty$ on $M$. Furthermore,
$\ddt u$ also converges to a constant.
\end{prop}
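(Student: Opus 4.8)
The plan is to exploit three facts already in hand: the exponential decay of the oscillation $\om(t) = \sup_M F(\cdot,t) - \inf_M F(\cdot,t) \le C e^{-at}$ for $F = \ddt u$, the $L^2$-decay $\int_M \phi^2\, dV \le C e^{-ct}$ of the normalized time-derivative, and, crucially, the uniform $C^\infty$-bounds on $v$ from Theorem \ref{t.longtime}. The statement splits into two parts, and I would treat the convergence of $\ddt u$ first, since it feeds directly into the convergence of $v$.

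First I would observe that, by the maximum principle applied to $(\tilde\De - \ddt)F = 0$, the quantity $\sup_M F(\cdot,t)$ is non-increasing and $\inf_M F(\cdot,t)$ is non-decreasing in $t$; both are bounded by $\sup_M|f|$, hence convergent, and the oscillation decay forces them to a common limit $a_\infty$. Since $F(x,t)$ always lies between these two, $\sup_x|F(x,t) - a_\infty| \le \om(t) \le C e^{-at}$, so $\ddt u$ converges uniformly to the constant $a_\infty$. Next, writing $v = u - \bar u$ with $\bar u = \Vol(M)^{-1}\int_M u\,dV$, I get $\ddt v = F - \Vol(M)^{-1}\int_M F\,dV$; since both $F(x,t)$ and its spatial average lie in $[\inf_M F, \sup_M F]$, this gives the pointwise bound $\sup_x |\ddt v(x,t)| \le \om(t) \le C e^{-at}$. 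Integrating in time, $\sup_x|v(x,t_2) - v(x,t_1)| \le \int_{t_1}^{t_2}\sup_x|\ddt v|\,ds \le (C/a)\, e^{-at_1}$, so $v(\cdot,t)$ is uniformly Cauchy as $t \to \infty$ and converges uniformly to a continuous $v_\infty$. Finally, because the $C^k$-norms of $v(\cdot,t)$ are uniformly bounded for every $k$ by Theorem \ref{t.longtime}, Arzel\`a--Ascoli lets me extract $C^\infty$-convergent subsequences along any $t_j \to \infty$; each such limit is forced to equal $v_\infty$ by the uniform convergence already established, so the whole family converges to $v_\infty$ in $C^\infty$ and $v_\infty$ is smooth.

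The step I expect to be the most delicate is the last one — upgrading $C^0$-convergence to $C^\infty$-convergence and confirming that no information is lost in passing to subsequences, i.e.\ that every subsequential $C^\infty$-limit really coincides with $v_\infty$. The uniform higher-order bounds reduce this to a compactness argument, but it must be phrased so that the full family, not just a subsequence, converges. An alternative that avoids the ad hoc compactness is to differentiate the potential equation and bootstrap the $L^2$-decay of $\phi$ through parabolic Schauder estimates (as in \cite{Cao}), yielding exponential decay of all derivatives of $v - v_\infty$ directly; I would fall back on this if the interpolation argument needed more care than expected.
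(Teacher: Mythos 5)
Your proof is correct, and it takes a cleaner route than the paper at the key step. The paper establishes that $v(\cdot,t)$ is Cauchy only in the $L^1$-norm, and to do so it needs \emph{both} decay estimates: the oscillation bound $\om(t)\le Ce^{-at}$ \emph{and} the $L^2$-decay $\int_M\phi^2\,dV\le Ce^{-ct}$, combined via Cauchy--Schwarz; it then identifies the $L^1$-limit with a subsequential $C^\infty$-limit supplied by Theorem~\ref{t.longtime} and asserts that the convergence upgrades to $C^\infty$ and that $\ddt u\to\mathrm{const}$ "follows". You instead observe that $\ddt v = F - \Vol(M)^{-1}\int_M F\,dV$ (legitimate because $\bar u$ is the average against the \emph{fixed} volume form $dV$, so time differentiation passes through the integral), and that both $F(x,t)$ and its average lie between $\inf_M F$ and $\sup_M F$, giving the pointwise bound $|\ddt v|\le\om(t)$. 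Integrating in time then yields a \emph{uniform} Cauchy estimate from the oscillation decay alone -- a stronger conclusion from fewer inputs, with the $\phi$-decay not needed at all for this proposition. You also supply the argument the paper omits for the second assertion: monotonicity of $\sup_M F$ and $\inf_M F$ from the maximum principle plus $\om(t)\to0$ pins down the common limit $a_\infty$, with $\sup_x|F(x,t)-a_\infty|\le\om(t)$. The final upgrade from uniform convergence to $C^\infty$-convergence via the uniform $C^k$-bounds and Arzel\`a--Ascoli (arguing that every subsequential $C^k$-limit must coincide with $v_\infty$, hence the full family converges) is the standard interpolation/compactness argument and is exactly the step the paper also leaves implicit; your fallback of bootstrapping decay of derivatives through parabolic Schauder estimates is the way to make the exponential rate, rather than mere convergence, survive to all orders.
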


\begin{proof}
We will show that the family $v(x,t)$ is Cauchy in the $L^1$-norm to some function $h(x)$.
Recall that  Theorem~\ref{t.longtime} implies that for some time sequence
$t_k \rightarrow \infty$, $v(x, t_k)$ converges to a smooth function $v_\infty(x)$.
So, $h$
has no choice but to be equal to $v_\infty$.

\bigskip

\noindent
{\bf Claim: } $v(x, t)$ is Cauchy in the $L^1-$ norm.

\medskip

To show this, let $0< s < \tau$, and consider
\begin{eqnarray*}
\int_M |v(x, s) - v(x, \tau)| dV & \leq &
\int_M \int_s^\tau |\ddt v|dt dV \\
& = &
\int_s^\tau \int_M |\ddt u - \frac{1}{\Vol M} \int_M \ddt u| dt dV \\
& \leq &
(\Vol(M))^{1/2} \int_s^\infty [\int_M \phi^2 dV]^{1/2} dt \\
& + &(\Vol (M))^{-1} \int_s^\infty \om(t) dt\\
& \leq &
C \int_s^\infty [e^{-C t}+ e^{-at} dt .
\end{eqnarray*}

At this point, we have seen that
$v(x,t)$ converges in  $L^1$-norm to the smooth $v_\infty$. It is not
hard to see that, in fact, the convergence happens in the $C^\infty$ norm.
Furthermore, it also follows that $\ddt u$ converges to a constant in the
$C^\infty$-norm.

\end{proof}

\section{Convergence to a \KE metric}

At this point, we proved all the ingredients to complete the proof of the main
theorem of this chapter.

\begin{thm}
Let $(M, g)$
 be  an $n$-dimensional  compact \kahler manifold, and let
 $\Om$ be a representative of the first Chern class $c_1(M)$ of
 $M$.

 Then, by deforming the initial metric $g$
via the flow
\beeq
\begin{cases}
\ddt g(t) = - \Ric(t) + \Om \\
g(0) = g
\end{cases}
\eeeq
we obtain another \kahler metric $\tilde g$, in the same \kahler
class as $g$, such that the Ricci curvature $\Ric(g)$ equals $\Om$.
\end{thm}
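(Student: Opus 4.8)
The plan is to combine the long-time existence and convergence results established in the previous sections, and then extract the Einstein-type identity by differentiating the limiting scalar equation with $\partial\bar\partial$.

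First I would recall that, by the reduction carried out earlier in this chapter, solving the flow in the statement is equivalent to solving the parabolic scalar equation (\ref{e.KRpotential}) for the potential $u$, the two being related by $\tilde g_{i\jbar}(t) = g_{i\jbar} - \frac{\ii}{2\pi}\partial\bar\partial u(t)$. Theorem~\ref{t.longtime} provides a solution $u$ for all $t\ge 0$ with $v := u - \mathrm{Ave}_M\, u$ uniformly bounded in $C^\infty$; in particular the second-order estimates show that all the metrics $\tilde g(t)$ are uniformly equivalent to the initial metric $g$. The proposition of the previous section then furnishes a smooth function $v_\infty$ on $M$ with $v(\cdot,t)\to v_\infty$ in $C^\infty$ as $t\to\infty$, together with a constant $a$ such that $\ddt u(\cdot,t)\to a$ uniformly. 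Since $\partial\bar\partial$ annihilates the spatially constant average, $\tilde g(t) = g - \frac{\ii}{2\pi}\partial\bar\partial v(t)$, so these metrics converge in $C^\infty$ to
$$
\tilde g_\infty \;:=\; g - \frac{\ii}{2\pi}\,\partial\bar\partial v_\infty .
$$
Because $\tilde g(t)\ge c\,g$ uniformly, the limiting $(1,1)$-form $\tilde g_\infty$ is positive, hence a genuine \kahler metric; and it lies in the same \kahler class as $g$, differing from it only by $\partial\bar\partial$ of a function.

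Next I would pass to the limit $t\to\infty$ in (\ref{e.KRpotential}). The left-hand side tends to $a$, the log-determinant term tends to $\log\bigl(\det(\tilde g_\infty)/\det(g)\bigr)$ (using the $C^\infty$ convergence and the positivity of $\tilde g_\infty$), and $f$ is fixed; hence $c(t)$ converges to some constant $c_\infty$ and $v_\infty$ satisfies the complex Monge--Amp\`ere equation
$$
\log\frac{\det\bigl(g_{i\jbar} - \frac{\ii}{2\pi}\partial\bar\partial v_\infty\bigr)}{\det(g_{i\jbar})} \;=\; a - f - c_\infty .
$$
Applying $-\frac{\ii}{2\pi}\partial\bar\partial$ to both sides, the left-hand side becomes $\Ric(\tilde g_\infty) - \Ric(g)$ (by the coordinate formula for the \kahler Ricci form), while the right-hand side becomes $\frac{\ii}{2\pi}\partial\bar\partial f$ since $a$ and $c_\infty$ are constants. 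Recalling that $f$ was chosen so that $\Ric(g) - \Om = -\frac{\ii}{2\pi}\partial\bar\partial f$, this reads precisely $\Ric(\tilde g_\infty) = \Om$, which is the assertion.

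Finally, regarding where the real difficulty sits: essentially all of the serious work — the \emph{a priori} estimates up to third order, the long-time existence, and the Harnack/entropy argument yielding exponential decay of the oscillation of $\ddt u$ and hence the $C^\infty$ convergence of $v$ (part of which is cited from Cao) — has already been carried out in the preceding sections, under the standing hypothesis $c_1(M)\le 0$ that makes those convergence results available. Within this final theorem the only points needing genuine, if brief, care are (i) that the limiting form $\tilde g_\infty$ is positive, which is immediate from the uniform equivalence of the $\tilde g(t)$, and (ii) that $\partial\bar\partial$ commutes with the $t\to\infty$ limit, which follows from the $C^\infty$ convergence. So the main obstacle of the chapter is not re-encountered at this stage; here it is merely harvested.
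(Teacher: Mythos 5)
Your proposal is correct and follows essentially the same route as the paper, which simply observes that once the uniform $C^\infty$ convergence of the potential is established the conclusion follows by passing to the limit in the scalar equation and applying $\partial\bar\partial$, exactly as in the Monge--Amp\`ere discussion of the previous chapter; you have merely written out the harvesting step in detail. One small caveat: your closing remark about a ``standing hypothesis $c_1(M)\le 0$'' is not needed (and not assumed) for this theorem, since one is prescribing the Ricci form of a representative of $c_1(M)$ of arbitrary sign, and the estimates of the chapter do not use a sign condition on $c_1$.
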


A straightforward consequence is the following.

\begin{cor}
If $c_1(M) = 0$, the \kahler \RF evolves any starting
\kahler metric to the Ricci-flat \kahler metric in its \kahler class.
\end{cor}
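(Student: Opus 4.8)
The plan is to deduce this directly from the preceding theorem, with only one additional ingredient. First I would note that the hypothesis $c_1(M) = 0$ in $H^2(M,\RR)$ means that the zero $(1,1)$-form is an admissible representative of the first Chern class, so we may take $\Om = 0$ in the theorem. With this choice the \kahler \RF equation $\ddt g(t) = -\Ric(t) + \Om$ becomes simply $\ddt g(t) = -\Ric(t)$, which is exactly the unnormalized \kahler \RF referred to in the statement.

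Next I would apply the theorem verbatim to this $\Om$: it guarantees that the flow, started at an arbitrary \kahler metric $g$ on $M$, converges to a \kahler metric $\tilde g$ lying in the same \kahler class $[\om]$ as $g$ and satisfying $\Ric(\tilde g) = \Om = 0$; that is, $\tilde g$ is Ricci-flat. That the \kahler class is unchanged along the flow is part of the theorem's content, and follows from $\ddt \om = -\Ric(\om) + \Om$ together with $[\Ric(\om)] = c_1(M) = [\Om]$, whence $[\ddt \om] = 0$ and $[\om(t)]$ is constant.

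Finally, to make sense of the wording ``\emph{the} Ricci-flat \kahler metric in its \kahler class'' — i.e. to upgrade existence to existence \emph{and} uniqueness — I would invoke the uniqueness part of Yau's theorem (Chapter~\ref{KahlerGeom}, \cite{Yau}) in the case $\Om = 0$: on a compact \kahler manifold there is a unique \kahler metric in each \kahler class whose Ricci form is a prescribed representative of $c_1(M)$, in particular a unique Ricci-flat metric in $[\om]$. Hence the limit $\tilde g$ produced by the flow is forced to coincide with that metric.

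I do not expect any real obstacle here: all of the analytic difficulty — long-time existence, the \emph{a priori} estimates, the Harnack-type argument, and the convergence of the \kahler potential — is already packaged inside the theorem being specialized. The only mild points of care are verifying that $\Om = 0$ genuinely represents $c_1(M)$ under the hypothesis (so that the theorem applies at all) and borrowing uniqueness from Yau's theorem rather than re-deriving it.
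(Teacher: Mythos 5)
Your proposal is correct and matches the paper's intent exactly: the paper presents this corollary as an immediate specialization of the main theorem to the representative $\Om = 0$ of $c_1(M)=0$, which is precisely what you do. Your extra remark that the definite article is justified by the uniqueness part of the Calabi--Yau theorem is a welcome clarification the paper leaves implicit.
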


The proof of the main theorem is immediate after proving the uniform convergence
of the \kahler potential $u(t)$, and follows exactly like in Chapter~\ref{KahlerGeom},
Section~\ref{Monge-Ampere}.

Finally, we remark that this method can be applied to the problem of finding
\KE metrics on manifolds with $c_1(M) < 0$. The
evolution equation for this case is given by
$$
\ddt g(t) = - \Ric(g(t))  - g(t),
$$
where $g(0) = g$, and the initial metric $g$ represents $-c_1(M)$.

The equation for the \KE potential is
$$
\ddt u = \log \det (g + \partial \bar \partial u) - \log \det (g) + f - u.
$$
The analysis for this case is simplified, as the maximum principle
can be applied directly to the equation above, providing the zero-order estimate
for $u$, and a bootstrapping argument can be used, as before, for the higher-order
{\em a priori} estimates.

\bibliography{Bibsantoro}{}
\bibliographystyle{plain}

\end{document}